\tikzstyle{blue circle opacity 0.5}=[
\tikzstyle{new style 1}=[
\tikzstyle{dashed edge}=[dashed]
\tikzstyle{thick black arrow}=[draw=black, ->, thick]
\newcommand{\ngcd}{\operatorname{ngcd}}
\newcommand{\tp}{\mathsf{T}}
\title[On the common zeros of quasi-modular forms for \(\Gamma_0^+(N)\) of level $N=1,2,3$]{On the common zeros of quasi-modular forms for \(\Gamma_0^+(N)\)\\ of level $N=1,2,3$}
\author{Bo-Hae Im}
\address{Department of Mathematical Sciences, KAIST, 291 Daehak-ro, Yuseong-gu, Daejeon, 34141, South Korea
} \email{bhim@kaist.ac.kr}
\author{Hojin Kim}
\address{Department of Mathematical Sciences, KAIST, 291 Daehak-ro, Yuseong-gu, Daejeon, 34141, South Korea
} \email{hojinkim@kaist.ac.kr}
\author{Wonwoong Lee}
\address{Department of Mathematical Sciences, KAIST, 291 Daehak-ro, Yuseong-gu, Daejeon, 34141, South Korea
} \email{leeww@kaist.ac.kr}
\subjclass[2020]{Primary 11F11, 11F99}
\keywords{Modular forms, Quasi-modular forms, Fricke group}
\thanks{This work was supported by the National Research Foundation of Korea(NRF) grant funded by the Korea government(MSIT)
(No.~2020R1A2B5B01001835).}
\theoremstyle{definition}
\newtheorem{theorem}{Theorem}[section]
\newtheorem{definition}[theorem]{Definition}
\newtheorem{lem}[theorem]{Lemma}
\newtheorem{prop}[theorem]{Proposition}
\newtheorem{cor}[theorem]{Corollary}
\newtheorem*{rmk}{Remark}
\newtheorem{example}[theorem]{Example}
\newcommand{\Et}{E_2^{(2)}}
\newcommand{\Ef}{E_4^{(2)}}
\newcommand{\Es}{E_6^{(2)}}
\newcommand{\Ett}{E_2^{(3)}}
\newcommand{\Eff}{E_4^{(3)}}
\newcommand{\Ess}{E_6^{(3)}}
\newcommand{\Etn}{E_2^{(N)}}
\newcommand{\Efn}{E_4^{(N)}}
\newcommand{\Esn}{E_6^{(N)}}
\newcommand{\SL}{\mathrm{SL}}
\newcommand{\Z}{\mathbb{Z}}
\newcommand{\Q}{\mathbb{Q}}
\newcommand{\Ha}{\mathbb{H}}
\newcommand{\C}{\mathbb{C}}
\newcommand{\Zc}{\mathcal{Z}}
\newcommand{\Ac}{\mathds{A}}
\newcommand{\wtdeg}{\operatorname{wtdeg}}
\newcommand{\px}{\frac{\partial}{\partial x}}
\newcommand{\QQ}{\mathbb{Q}}
  \date{\today}
\begin{document}

\maketitle

\begin{abstract}
    In this paper, we study common zeros of the iterated derivatives of the Eisenstein series for $\Gamma_0^+(N)$ of level $N=1,2$ and $3$, which are  quasi-modular forms. More precisely, we  investigate the common zeros of quasi-modular forms, and prove that all the zeros of the iterated derivatives of the Eisenstein series $\frac{d^m E_k^{(N)}(\tau)}{d\tau^m}$ of weight $k=2,4,6$ for $\Gamma_0^+(N)$ of level $N=2,3$ are simple by generalizaing the results of  Meher \cite{MEH} and  Gun and Oesterl\'{e} \cite{SJ20} for $\SL_2(\Z)$.
\end{abstract}

\section{Introduction and the main results}

 Let \(\Gamma\) denote a Fuchsian group of the first kind, and for a positive integer $N$, let \(\Gamma_0^+(N)\) be a subgroup of \(\mathrm{SL}_2(\mathbb R)\) generated by the Hecke congruence group \(\Gamma_0(N)\) and the Fricke involution \(w_N:=
\begin{pmatrix}
0 & -\frac{1}{\sqrt N} \\
\sqrt N & 0 
\end{pmatrix}\), and \(\mathbb H\) be the complex upper half-plane.

First, we recall the definition of a quasi-modular form for $\Gamma$ which Kaneko and Zagier \cite{4} have introduced.
\begin{definition}\label{def1.1}
For a positive integer \(k\) and a non-negative $\ell$, a quasi-modular form of weight \(k\) and depth \(\ell\) for \(\Gamma\) is a holomorphic function \(f\) on \(\mathbb H\) satisfying the following conditions:
\begin{enumerate}[(i)]
    \item There exist holomorphic functions \(Q_i(f)\) for \(i=0,1,\ldots,\ell\) that satisfy
\begin{equation*}
    f[\gamma]_k=\sum_{i=0}^{\ell} Q_i(f)X(\gamma)^i, \quad \text{ with }  Q_{\ell}(f) \not\equiv 0, \text{ for all } \gamma=\begin{pmatrix}
a & b \\
c & d
\end{pmatrix} \in \Gamma,
\end{equation*}
 where the operator \([\gamma]_k\) is defined by
\begin{equation*}
    f[\gamma]_k(\tau)=(c\tau+d)^{-k}f(\gamma \tau),
\end{equation*}
and the function \(X(\gamma)\) is defined by
\begin{equation*}
    X(\gamma)(\tau)=\frac{\tau}{c\tau+d}.
\end{equation*}
    \item \(f\) is polynomially bounded, i.e., there exists a constant \(\alpha>0\) such that \(f(\tau)=O((1+|\tau|^2)/v)^{\alpha}\) as \(v \to \infty\) and \(v \to 0\), where \(v=\Im(\tau)\).
\end{enumerate}
\end{definition}

One may define the quasi-modular form using the notion of almost holomorphic modular forms, that is, the constant term $F_0(\tau)$ in the variable $v$ of an almost holomorphic modular form $F(z):=\sum_{i=0}^l F_i(\tau)(-4 \pi v)^{-i}.$ In fact, the notions of quasi-modular forms and almost holomorphic modular forms coincide since if $f=F_0$ is a quasi-modular form inherited from an almost holomorphic modular form $F(z):=\sum_{i=0}^l F_i(\tau)(-4 \pi v)^{-i}$ then $Q_i(f)=F_i.$ See \cite[Section 5.3]{BRU} for more details.

\

The following proposition shows the structure of the space of quasi-modular forms.

\begin{prop}\label{prop1.7}{\cite[Proposition 20]{BRU}} Let $\ell$ be a non-negative integer and \(\Gamma\) be a non-cocompact Fuchsian group of the first kind. For a non-negative integer $k$, denote by \(M_k(\Gamma)\) the space of modular forms of weight~\(k\) for \(\Gamma\) and by \(\widetilde{M}_k^{(\leq \ell)}(\Gamma)\) the space of quasi-modular forms of weight \(k\) and depth \(\leq \ell\) for \(\Gamma\). Let $\theta:=\frac{1}{2\pi i}\frac{d}{d\tau}$ and \(\phi\) be a quasi-modular form of weight 2 for \(\Gamma\) which is not modular. Then we have the following:
\begin{enumerate}[(i)]
    \item \(\theta(\widetilde{M}_k^{(\leq \ell)}(\Gamma)) \subseteq \widetilde{M}_{k+2}^{(\leq \ell+1)}(\Gamma)\).
    \item \(\widetilde{M}_k^{(\leq \ell)}(\Gamma)=\bigoplus_{r=0}^{\ell} M_{k-2r}(\Gamma)\cdot \phi^r\).
\end{enumerate}
\end{prop}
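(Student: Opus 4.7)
\textbf{Plan for Proposition \ref{prop1.7}.}

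\emph{Part (i).} My plan is to differentiate the quasi-modular transformation law with respect to $\tau$ and track how the coefficients transform. A direct application of the chain rule to $(c\tau+d)^{-k} f(\gamma\tau)$ gives the identity
\[
(\theta f)[\gamma]_{k+2}(\tau) = \theta\bigl(f[\gamma]_k\bigr)(\tau) + \frac{k c}{2\pi i (c\tau + d)}\, f[\gamma]_k(\tau),
\]
which quantifies the failure of $\theta$ to commute with the slash operator. Substituting $f[\gamma]_k = \sum_{i=0}^{\ell} Q_i(f)\, X(\gamma)^i$ into both sides, applying the product rule, and rewriting $\theta(X(\gamma))$ and $c/(c\tau+d)$ as polynomial expressions in $X(\gamma)$ with $\tau$-dependent (but $\gamma$-independent) coefficients, I would regroup to obtain
\[
(\theta f)[\gamma]_{k+2} = \sum_{j=0}^{\ell+1} Q_j(\theta f)\, X(\gamma)^j,
\]
with each $Q_j(\theta f)$ an explicit linear combination of the $Q_i(f)$ and $\theta Q_i(f)$. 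Polynomial boundedness is preserved under $\theta$ by standard Cauchy-estimate type arguments, which establishes $\theta f \in \widetilde{M}_{k+2}^{(\leq \ell+1)}(\Gamma)$.

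\emph{Part (ii).} I would argue by induction on the depth $\ell$. The base case $\ell = 0$ is immediate, since by definition $\widetilde{M}_k^{(\leq 0)}(\Gamma) = M_k(\Gamma)$. The central subclaim driving the inductive step is:

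\emph{Subclaim.} For every $f \in \widetilde{M}_k^{(\leq \ell)}(\Gamma)$, the leading coefficient $Q_\ell(f)$ lies in $M_{k-2\ell}(\Gamma)$.

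To prove the subclaim I would apply $[\gamma']_k$ to both sides of $f[\gamma]_k = \sum_{i} Q_i(f) X(\gamma)^i$, compare with the expansion of $f[\gamma\gamma']_k = \sum_{i} Q_i(f) X(\gamma\gamma')^i$ obtained from the cocycle property, and then read off the coefficient of the highest power of the $\gamma$-cocycle variable. The resulting identity forces $Q_\ell(f)[\gamma']_{k-2\ell} = Q_\ell(f)$ for all $\gamma' \in \Gamma$, so $Q_\ell(f)$ is modular of weight $k - 2\ell$.

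Granting the subclaim, apply it to $\phi$: then $c := Q_1(\phi) \in M_0(\Gamma)$, and since $\Gamma$ is a non-cocompact Fuchsian group of the first kind, a polynomially bounded holomorphic $\Gamma$-invariant function on $\mathbb{H}$ is constant. The constant $c$ is nonzero because $\phi$ is not modular. A binomial expansion of $\phi^\ell[\gamma]_{2\ell} = \bigl(Q_0(\phi) + c\, X(\gamma)\bigr)^\ell$ gives $Q_\ell(\phi^\ell) = c^\ell \neq 0$. Consequently
\[
f - \frac{Q_\ell(f)}{c^\ell}\, \phi^\ell \;\in\; \widetilde{M}_k^{(\leq \ell-1)}(\Gamma),
\]
and the inductive hypothesis yields the desired decomposition. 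Uniqueness follows by a parallel induction: in any relation $\sum_{r=0}^\ell g_r \phi^r = 0$ with $g_r \in M_{k-2r}(\Gamma)$, applying $Q_\ell$ forces $g_\ell = 0$, reducing to lower depth.

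\emph{Main obstacle.} The hardest step is the subclaim that $Q_\ell(f)$ is modular of weight $k - 2\ell$. The cleanup requires careful bookkeeping of how the intermediate coefficients $Q_i(f)$ mix among themselves under composition of slash operators, and extracting precisely the dependence of $X(\gamma\gamma')$ on $X(\gamma)$ and $X(\gamma')$ so that the leading coefficient can be isolated from the cross terms. Once this is organized, the remaining steps (the weight-0 constancy of $Q_1(\phi)$, the binomial computation for $\phi^\ell$, and the descent) are routine.
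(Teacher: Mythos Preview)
The paper does not give its own proof of this proposition; it is quoted verbatim as \cite[Proposition~20]{BRU} and used without argument. Your outline is the standard proof (and is in substance the one found in \cite{BRU}): differentiate the transformation law for (i), and for (ii) induct on depth after establishing that the top coefficient $Q_\ell(f)$ is genuinely modular of weight $k-2\ell$, then peel off $Q_\ell(f)\phi^\ell/c^\ell$. There is nothing to compare against in the present paper, and your plan is correct as written.

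One minor caution: the paper's displayed definition $X(\gamma)(\tau)=\tau/(c\tau+d)$ appears to be a typo for the usual $c/(c\tau+d)$; your computation in Part~(i), where you rewrite $c/(c\tau+d)$ and $\theta(X(\gamma))$ as polynomials in $X(\gamma)$, implicitly uses the standard convention, so just be sure to flag or correct that when writing out the details.
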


\

Let \(j_2\) and \(j_3\) be the Hauptmodul for \(\Gamma_0^+(2)\) and \(\Gamma_0^+(3)\), respectively, defined by 
\begin{align*}
j_2(\tau)&=\left(\frac{\eta(\tau)}{\eta(2\tau)}\right)^{24}+24+2^{12}\left(\frac{\eta(2\tau)}{\eta(\tau)}\right)^{24}, \\
j_3(\tau)&=\left(\frac{\eta(\tau)}{\eta(3\tau)}\right)^{12}+12+3^{6}\left(\frac{\eta(3\tau)}{\eta(\tau)}\right)^{12},
\end{align*}
where $\eta(\tau)$ is the Dedekind eta function,
\begin{equation*}\label{notation}
\begin{aligned}
&\eta(\tau)=q^{1/24}\prod_{n=1}^{\infty}(1-q^n).
\end{aligned}
\end{equation*}

For each positive even  integer \(k\), let
\begin{equation*}
\begin{aligned}
&E_{k}^{(N)}(\tau)=\frac{1}{1+N^{k/2}}(E_{k}(\tau)+N^{k/2}E_{k}(N\tau)), \quad \text{for } N=1,2,3.
\end{aligned}
\end{equation*}

We note that $E_k^{(N)}$ is a modular form of weight $k$ for $\Gamma_0^+(N)$ if $k \geq 4$, and in particular for $N=1$,
$$E_k^{(1)}(\tau):=E_k(\tau)=1-\frac{2k}{B_k}\sum_{n=1}^{\infty}\sigma _{k-1}(n)\tau^n,\quad 
\text{ where } \sigma_{k-1}(n)=\sum_{d\mid n}d^{k-1},$$ is the standard Eisenstein series of weight $k$ for $\SL_2(\Z)$.  One can express \(j_N\) as a rational function in \(\Efn\) and \(\Esn\)(see~\cite{JST}).

For \(N=2,3\) and for a non-negative integer $\ell \in \Z_{\geq 0}$,  Proposition \ref{prop1.7} implies that every quasi-modular form \(f\) of weight \(k\) and depth \(\leq \ell\) for \(\Gamma_0^+(N)\) can be written as
\begin{equation*}
    f(\tau)=f_0(\tau)+f_1(\tau)E_2^{(N)}(\tau)+\cdots+f_p(\tau)(E_2^{(N)}(\tau))^{\ell},
\end{equation*}
where for \(0 \leq i \leq \ell\),  \(f_i\) is a modular form of weight \(k-2i\) for $\Gamma_0(N)$.

\

The zeros of certain modular forms for \(\mathrm{SL}_2(\mathbb Z\)) have been studied actively. It dates back to Rankin and Swinnerton-Dyer's celebrated result \cite{RS70} which has proved Wohlfahrt's conjecture~\cite{Woh64}, that is, they have proved that all the zeros of the Eisenstein series $E_k$ for $k \geq 4$ lies on the unit circle $|\tau|=1$ in the stardard fundamental domain $\{\tau \in \Ha : -\frac{1}{2} \leq \Re{\tau} \leq \frac{1}{2}, \text{ } |\tau| \geq 1\}$. Since then, extensive and various studies on the zeros of modular and quasi-modular forms have been conducted in this regard. Getz \cite{7} has generalized Rankin and Swinnerton-Dyer's result \cite{RS70} for the modular form of arbitrary weight $k \geq 4$ for $\SL_2(\Z).$ They proved that if a holomorphic modular form $f(\tau)$ has the Fourier expansion of the form $f(\tau)=1+cq^{\dim S_k+1}+\cdots$ for some $c\in \C$, where $q:=e^{2\pi i \tau}$ and $S_k$ is the space of cusp forms of weight $k$ for $\SL_2(\Z)$, then the zeros of $f(z)$ has the same property.  On the other hand, Rankin \cite{Ran82} has extended Rankin and Swinnerton's results \cite{RS70} to the Poincare series whose order is a rational function with real coefficients. Gun \cite{8} has generalized the argument of Rankin \cite{Ran82} to prove a similar result of \cite{7} for certain cusp forms. Kohnen \cite{9} has given the explicit formula of zeros of $E_2$ lying on the unit circle in the right half plane.

Also, for the aspect of our interest, we note that Saber and Sebbar \cite{17} have studied the zeros of \(f'\) of a modular form \(f\) for \(\mathrm{SL}_2(\mathbb Z)\). For an even integer \(k \geq 4\), Balasubramanina and Gun \cite{11} have studied the transcendence of the zeros of \(\theta E_k(\tau)\), and Miezaki, Nozaki and Shigezumi \cite{12} have studied the location of the zeros of the Eisenstein series $E_k^{(N)}$ for \(\Gamma_0^+(N)\), for $N=2,3$.

\


In \cite{Woh64}, together with the famous conjecture resolved in \cite{RS70}, Wohlfahrt has proved that all  the zeros of the Eisenstein series $E_k$ for weight $4 \leq k \leq 26$ are simple. El Basraoui and Sebbar \cite{ES10} have paid attention to the Eisenstein series $E_2$ instead of the Eisenstein series $E_k$ for $k \geq 4$ which are modular forms. They showed that there are infinitely many $\SL_2(\Z)$-inequivalent zeros of $E_2$, and they also proved that the multiplicity of the zero of $E_2$ is also simple. 

On the other hand, Choi and the first named author in \cite{6} and \cite{13} have traced the location of the Eisenstein series of weight 2 for the Fricke group $\Gamma_0^+(N).$ In particular, they have proved that the Eisenstein series $E_2^{(N)}$ has infinitely many $\Gamma_0^+(N)$-inequivalent zeros in the strip $\{z \in \Ha :- \frac{1}{2} < \Re{z} \leq \frac{1}{2} \}$ and for $N=2$ there is a fundamental domain of $\Gamma_0^+(N)$ in which $E_2^{(N)}$ does not have zeros. They have also proved that there is exactly one $\Gamma_0^+(N)$-inequivalent zero lying on each of the mixed Ford circles for $N=2,3.$ 

\

Meher \cite{MEH} has focused on the multiplicity of zeros. More specifically, Meher has studied several properties on the common zeros of quasi-modular forms for $\SL_2(\Z)$, and then has proved that all the zeros of $E_2, \frac{dE_2}{d\tau}, \frac{d^2E_2}{d\tau^2}$ and $\frac{d^3E_2}{d\tau^3}$ are simple, which is a generalization of the result in \cite{ES10}. Then, Gun and Oesterl\'{e} \cite{SJ20} have generalized Meher's result. More precisely, they have provided certain conditions which imply that two quasi-modular forms for $\SL_2(\Z)$ have no common zero and proved that for $k \geq 2$, the iterated derivative  $\frac{d ^j E_k}{d\tau^j}$ of the Eisenstein series for an arbitrary order $j \geq 0$ has only simple zeros.

\

In this paper, we generalize this result for the Fricke groups $\Gamma_0^+(2)$ and $\Gamma_0^+(3)$ and prove the following result.

\begin{theorem}
\label{thm: simple zero quasimodular} For $N=2,3$, all zeros of $\dfrac{d^j E_2^{(N)}(\tau)}{d\tau^j}$, $\dfrac{d^jE_4^{(N)}(\tau)}{d\tau^j}$, $\dfrac{d^jE_6^{(N)}(\tau)}{d\tau^j}$ are simple for all integers $j\ge0$. 
\end{theorem}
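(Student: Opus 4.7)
The strategy is to adapt the approach of Meher~\cite{MEH} and Gun-Oesterl\'e~\cite{SJ20} to the Fricke groups $\Gamma_{0}^{+}(N)$ for $N=2,3$. Since $\tfrac{d}{d\tau}=2\pi i\,\theta$, a holomorphic function $f$ on $\Ha$ has only simple zeros if and only if $f$ and $\theta f$ share no common zero. Thus it suffices to show that, for each $k\in\{2,4,6\}$ and each $j\ge 0$, the quasi-modular forms $\theta^{j}E_{k}^{(N)}$ and $\theta^{j+1}E_{k}^{(N)}$ have no common zero in $\Ha$. The proof proceeds by establishing a general criterion for pairs of quasi-modular forms for $\Gamma_{0}^{+}(N)$ to have no common zero, then verifying that the consecutive iterated derivatives of $E_{k}^{(N)}$ satisfy this criterion.

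The criterion is a Fricke-group analogue of the Wronskian construction in \cite{SJ20}. Using the depth decomposition of Proposition~\ref{prop1.7}(ii) to write $f=\sum_{i}f_{i}(E_{2}^{(N)})^{i}$ and $g=\sum_{i}g_{i}(E_{2}^{(N)})^{i}$, one forms a compensated Wronskian
\[
\mathcal{W}(f,g)\;=\;w_{g}\,f\,\theta g\,-\,w_{f}\,g\,\theta f\,+\,(\text{correction in }E_{2}^{(N)})
\]
of weight $w_{f}+w_{g}+2$, where the correction is chosen to cancel the quasi-modular defect and produce a genuine holomorphic modular form for $\Gamma_{0}^{+}(N)$. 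Any common zero $\tau_{0}\in\Ha$ of $f$ and $g$ forces $\mathcal{W}(f,g)(\tau_{0})=0$, and combining this with the valence formula for $\Gamma_{0}^{+}(N)$ and the leading Fourier coefficient of $\mathcal{W}(f,g)$ at $\infty$ yields a contradiction, provided $\mathcal{W}(f,g)\not\equiv 0$. To apply the criterion to $f=\theta^{j}E_{k}^{(N)}$ and $g=\theta^{j+1}E_{k}^{(N)}$, I would first derive explicit Ramanujan-type identities expressing $\theta E_{2}^{(N)}$, $\theta E_{4}^{(N)}$, $\theta E_{6}^{(N)}$ as polynomial combinations of $E_{2}^{(N)},E_{4}^{(N)},E_{6}^{(N)}$ by matching Fourier coefficients inside the finite-dimensional spaces $\widetilde{M}_{k+2}^{(\le \ell)}(\Gamma_{0}^{+}(N))$ given by Proposition~\ref{prop1.7}; these identities allow one to compute the depth expansion of $\theta^{j}E_{k}^{(N)}$ inductively and to identify its leading depth coefficient as a nonzero polynomial in $E_{4}^{(N)},E_{6}^{(N)}$.

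The main obstacle is to verify that the Wronskian $\mathcal{W}(\theta^{j}E_{k}^{(N)},\theta^{j+1}E_{k}^{(N)})$ is not identically zero. By induction on $j$ this reduces to the nonvanishing of a specific polynomial in $E_{4}^{(N)}$ and $E_{6}^{(N)}$ built from the leading depth coefficients of the two consecutive iterates, and this polynomial must be tracked via a recursion derived from the Ramanujan identities above. A second, more delicate difficulty comes from the valence formula for $\Gamma_{0}^{+}(N)$, which carries contributions from several elliptic orbits --- in particular the Fricke fixed point $\tau=i/\sqrt{N}$ --- each of which can absorb a fractional share of the zero count. The bookkeeping must confirm that these fractional contributions cannot accommodate the extra vanishing of $\mathcal{W}$ forced by a hypothetical common zero at $\tau_{0}$; this is resolved by explicit evaluation of the Eisenstein series $E_{4}^{(N)}$ and $E_{6}^{(N)}$ at the elliptic points of $\Gamma_{0}^{+}(N)$ together with a careful analysis of the leading Fourier coefficients at $\infty$.
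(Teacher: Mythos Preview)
Your proposal has two genuine gaps, and the paper's proof proceeds along an entirely different route.

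\textbf{Gap 1: The Ramanujan identities are not polynomial for $N=2,3$.} You plan to ``derive explicit Ramanujan-type identities expressing $\theta E_{2}^{(N)}$, $\theta E_{4}^{(N)}$, $\theta E_{6}^{(N)}$ as polynomial combinations of $E_{2}^{(N)},E_{4}^{(N)},E_{6}^{(N)}$.'' Such identities do not exist. Zudilin's identities \eqref{Ramanujan2}, \eqref{Ramanujan3} give, for instance,
\[
\theta E_6^{(2)}=\tfrac14\Bigl(3E_2^{(2)}E_6^{(2)}-2(E_4^{(2)})^2-\tfrac{(E_6^{(2)})^2}{E_4^{(2)}}\Bigr),
\]
and since $E_4^{(N)},E_6^{(N)}$ are algebraically independent, the $1/E_4^{(N)}$ cannot be cleared. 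The paper calls this ``a serious obstruction'' precisely because the differential operator $D^{(N)}$ on $\QQ(x,y,z)$ corresponding to $\theta$ does not preserve $\QQ[x,y,z]$; a large part of Section~\ref{sec: simplicity} and Appendix~\ref{sec: appendix} is devoted to controlling the powers of $y$ in the denominators of $(D^{(N)})^j x$, $(D^{(N)})^j y$, $(D^{(N)})^j z$. Your inductive tracking of ``leading depth coefficients as nonzero polynomials in $E_4^{(N)},E_6^{(N)}$'' therefore breaks down immediately.

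\textbf{Gap 2: The valence-formula step does not yield a contradiction.} Even granting a compensated Wronskian $\mathcal{W}(f,g)$ that is a genuine modular form vanishing at any common zero $\tau_0$ of $f$ and $g$, the conclusion ``$\mathcal{W}(\tau_0)=0$ combined with the valence formula yields a contradiction'' is unsupported. A nonzero modular form of positive weight has zeros; the valence formula merely counts them. To extract a contradiction you would have to identify $\mathcal{W}(\theta^{j}E_k^{(N)},\theta^{j+1}E_k^{(N)})$ explicitly for every $j$ and show its zero set avoids all possible $\tau_0$ --- but $\tau_0$ is an unknown point among the infinitely many $\Gamma_0^+(N)$-inequivalent zeros a quasi-modular form can have (Theorem~\ref{thm1.2}). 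Nothing in your outline bounds where $\tau_0$ could lie. This is also \emph{not} the method of Gun--Oesterl\'e~\cite{SJ20}; their argument is algebraic, not analytic.

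\textbf{What the paper actually does.} The paper invokes a Nesterenko-type transcendence theorem for $\Gamma_0^+(N)$ (Theorem~\ref{IL}): for any $\alpha\in\Ha$, at least three of $e^{2\pi i\alpha},E_2^{(N)}(\alpha),E_4^{(N)}(\alpha),E_6^{(N)}(\alpha)$ are algebraically independent. This forces the ideal $P_\alpha\subset\overline{\Q}[x,y,z]$ of polynomials vanishing at $(E_2^{(N)}(\alpha),E_4^{(N)}(\alpha),E_6^{(N)}(\alpha))$ to be principal (Lemma~\ref{principal}), so two quasi-modular forms with a common zero must have numerators (under $\rho$) sharing a nontrivial common factor (Proposition~\ref{gcd}). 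The theorem then reduces to a purely algebraic statement: $\ngcd\bigl((D^{(N)})^{j}t,(D^{(N)})^{j+1}t\bigr)=1$ for $t\in\{x,y,z\}$ (Proposition~\ref{thm: simple zeros N}). Proving this requires the interplay relations of Lemmas~\ref{lem: interplaying N} and~\ref{lem: integral N} together with the denominator-degree analysis alluded to above (Lemmas~\ref{lem: y powers N=2}, \ref{lem: y powers N=3}) to rule out $y$ and $y^3-z^2$ as possible common factors. Your proposal omits the transcendence input entirely, and without it there is no mechanism to pass from ``no common polynomial factor'' to ``no common zero in $\Ha$.''
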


We give the proof of Theorem~\ref{thm: simple zero quasimodular} in Section~\ref{sec: simplicity} and Appendix~\ref{sec: appendix}. The idea of the proof is based on the method of Gun and Oeterl\'{e} given in \cite{SJ20} which states that the problem on the common zeros of quasi-modular forms may be reduced to the problem on certain polynomials and their common divisors. However, contrary to the case $N=1$, the {\it Ramanujan identities} for $N=2,3$ (see \eqref{Ramanujan2} and \eqref{Ramanujan3} in Section \ref{sec: common zero}) are not of the polynomial forms anymore. This is a serious obstruction to resolving the problem.  In Section~\ref{sec: simplicity}, we show how to overcome this obstruction by a delicate analysis of the arithmetic nature of coefficients that appear in certain polynomials associated to the iterated derivatives of the Eisenstein series for $\Gamma_0^+(N)$ for $N=2,3$.  We note that some calculations for $\Gamma_0^+(3)$ is postponed to Appendix~\ref{sec: appendix}, since the main idea is same with the case for $\Gamma_0^+(2)$, except that the required calculations are more complicated for $\Gamma_0^+(3)$. We also treat the case for $N=1$ for reader's convinience.


In Section~\ref{sec: common zero}, we investigate the common zeros of various quasi-modular forms for $\Gamma_0^+(N)$ of level $N=1,2,3.$ Especially, we prove that there are no common zeros of $E_2^{(N)}$ and $\frac{d^m E_2^{(N)}(\tau)}{d\tau ^m}$ for any positive integer $m$. This can be shown by two different methods (see Proposition \ref{prop_ratandE2} and Proposition \ref{prop_f'andE2}). We also prove that if $j_N(\alpha)$ is an algebraic number, then $\alpha$ can never be a zero of $\frac{d^n E_2^{(N)}(\tau)}{d\tau ^n}$ for $n \geq 0$ (see Proposition \ref{prop1.3}). These are  the generalization of all of Meher's results in \cite{MEH}, and the proofs are similar. 
We also give various pairs of quasi-modular forms for $\SL_2(\Z)$ which have no common zero. We show that there is no common zero of any holomorphic modular form and a quasi-modular form of maximal depth. Also we deduce that there is no common zero of $\frac{d^m E_k}{d\tau ^m}$ and arbitrary non-zero holomorphic modular form with rational Fourier coefficients for $\SL_2(\Z)$, for weight $k=4$ and $6$.

Also, as an extension of Meher's result \cite[Theorem 2.4]{MEH},  we prove that there are infinitely many $\Gamma_0^+(N)$-inequivalent zeros in $\Ha$ of a quasi-modular form of depth $1$ in Theorem \ref{thm1.2} of Section \ref{sec2}.

\section{Equivariant forms}\label{sec2}

El Basraoui and Sebbar have investigated several properties of equivariant forms in \cite{equi-s}. Let us recall them briefly, first.

The ``double-slash" operator is defined for a meromorphic function \(f\) on \(\mathbb H\) by
\begin{equation*}
    f\|[\gamma](\tau)=(c\tau+d)^{-2}f(\gamma \tau)-\frac{c}{(c\tau+d)}, \quad \text{for } \gamma=\begin{pmatrix}
    a & b \\
    c & d
    \end{pmatrix} \in \mathrm{SL}_2(\mathbb R).
\end{equation*}

For a function \(h:\mathbb H \to \mathbb C\) which is not the identity, we let
\begin{equation*}
    \hat{h}(\tau):=\frac{1}{h(\tau)-\tau}.
\end{equation*}

 A meromorphic function \(h\) on $\Ha$ is called {\it an equivariant function} for \(\Gamma\) if it satisfies
\begin{equation*}
    h(\gamma \tau)=\gamma h(\tau), \quad \text{for each } \gamma \in \Gamma.
\end{equation*}
If \(\hat h\) is meromorphic at every cusp of \(\Gamma\), then \(\hat h\) is called {\it an equivariant form} for \(\Gamma\). Here we say that \(h\) is meromorphic at the cusp \(s\) if \(\hat h \|[\delta](\tau)\) is meromorphic at \(\infty\), where \(\delta \in \mathrm{SL}_2 (\mathbb R)\) sends \(\infty\) to \(s\).

\begin{prop}{\cite[Proposition 3.1]{equi-s}}\label{equivariant}
Let \(h\) be a meromorphic function on \(\mathbb H\). Then, for \(\gamma \in \Gamma\) and \(\tau \in \mathbb H\), we have 
\begin{equation*}
    h(\gamma \tau)=\gamma h(\tau) \text{ if and only if } \hat h \|[\gamma](\tau)=\hat h (\tau). 
\end{equation*}
\end{prop}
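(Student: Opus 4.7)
The plan is to prove the equivalence by a direct algebraic computation, showing that both sides of the equivalence reduce to the same identity after unwinding the definitions. Write $\gamma=\begin{pmatrix}a&b\\c&d\end{pmatrix}\in\Gamma$, so that $\gamma\tau=\frac{a\tau+b}{c\tau+d}$ and $ad-bc=1$. The key algebraic ingredient is the cocycle-type identity
\begin{equation*}
    \gamma h(\tau) - \gamma\tau \;=\; \frac{ah(\tau)+b}{ch(\tau)+d}-\frac{a\tau+b}{c\tau+d}
                            \;=\; \frac{(ad-bc)(h(\tau)-\tau)}{(ch(\tau)+d)(c\tau+d)}
                            \;=\; \frac{h(\tau)-\tau}{(ch(\tau)+d)(c\tau+d)},
\end{equation*}
which I would establish first as a preliminary computation.

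Next, for the forward implication, I would assume $h(\gamma\tau)=\gamma h(\tau)$ and compute $\hat h\|[\gamma](\tau)$ directly from the definitions of the double-slash operator and of $\hat h$:
\begin{equation*}
    \hat h\|[\gamma](\tau) \;=\; \frac{1}{(c\tau+d)^2(h(\gamma\tau)-\gamma\tau)}-\frac{c}{c\tau+d}
                               \;=\; \frac{1}{(c\tau+d)^2(\gamma h(\tau)-\gamma\tau)}-\frac{c}{c\tau+d}.
\end{equation*}
Substituting the cocycle identity in the denominator yields $\frac{ch(\tau)+d}{c\tau+d}-\frac{c}{c\tau+d}=\frac{ch(\tau)+d-c(\tau)\cdot?}{\cdots}$; after careful simplification this telescopes to $\frac{1}{h(\tau)-\tau}=\hat h(\tau)$.

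For the reverse implication, I would start from $\hat h\|[\gamma](\tau)=\hat h(\tau)$, rewrite it as
\begin{equation*}
    \frac{1}{(c\tau+d)^{2}(h(\gamma\tau)-\gamma\tau)} \;=\; \frac{1}{h(\tau)-\tau}+\frac{c}{c\tau+d} \;=\; \frac{ch(\tau)+d}{(h(\tau)-\tau)(c\tau+d)},
\end{equation*}
and solve for $h(\gamma\tau)-\gamma\tau$. This gives $h(\gamma\tau)-\gamma\tau=\frac{h(\tau)-\tau}{(ch(\tau)+d)(c\tau+d)}$, which by the cocycle identity equals $\gamma h(\tau)-\gamma\tau$; cancelling the $\gamma\tau$ terms yields $h(\gamma\tau)=\gamma h(\tau)$.

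The argument is essentially a single line of algebra once the cocycle identity is in place; there is no real obstacle beyond bookkeeping. The only subtlety is the meromorphy: one should note that the identities are between meromorphic functions, so the manipulations (clearing denominators, inverting) are valid away from the discrete set of poles and zeros of $h(\tau)-\tau$ and of $ch(\tau)+d$, and both sides of the claimed equivalence extend meromorphically to all of $\mathbb H$, so equality on a dense open set suffices.
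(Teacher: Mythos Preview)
Your argument is correct: the cocycle identity $\gamma h(\tau)-\gamma\tau=\dfrac{h(\tau)-\tau}{(ch(\tau)+d)(c\tau+d)}$ is exactly the right hinge, and both implications follow from it by the straightforward manipulations you indicate (the line you left with a ``$?$'' cleans up to $\frac{ch(\tau)+d-c(h(\tau)-\tau)}{(c\tau+d)(h(\tau)-\tau)}=\frac{c\tau+d}{(c\tau+d)(h(\tau)-\tau)}=\hat h(\tau)$, as you surely intended). The paper does not supply its own proof of this proposition---it is quoted verbatim from \cite[Proposition~3.1]{equi-s}---and your direct computation is precisely the standard one given there, so there is nothing to compare.
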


\begin{lem}\label{lem_slash23}
If \(f=f_0+f_1E_2^{(2)}\) is a quasi-modular form of weight \(k\) and depth~\(1\) for \(\Gamma_0^+(2)\), then for \(\tau \in \mathbb H\), \(h_2(\tau):=\tau+\frac{4}{i\pi}\frac{f_1(\tau)}{f(\tau)}\) is an equivariant form for \(\Gamma_0^+(2)\). Similarly, if \(g=g_0+g_1E_2^{(3)}\) is a quasi-modular form of weight \(k\) and depth~$1$ for \(\Gamma_0^+(3)\), then \(h_3(\tau)=\tau+\frac{3}{i\pi}\frac{g_1(\tau)}{g(\tau)}\) is an equivariant form for \(\Gamma_0^+(3)\). 
\end{lem}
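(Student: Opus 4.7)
My overall plan is to apply Proposition~\ref{equivariant}: this reduces the equivariance of $h_N$ on $\Ha$ to the functional identity $\hat{h}_N\|[\gamma] = \hat{h}_N$ for every $\gamma\in\Gamma_0^+(N)$, after which only meromorphicity at the cusps remains. The key observation is that $\hat{h}_N(\tau) = \frac{i\pi}{c_N}\cdot\frac{f(\tau)}{f_1(\tau)}$ with $c_2=4,\ c_3=3$, so everything will come down to tracking how the quasi-modular transformation of $E_2^{(N)}$ propagates through the ratio $f/f_1$.

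The first step would be to make this transformation law explicit. Since $\Gamma_0^+(N)$ is generated by $\Gamma_0(N)$ and the Fricke involution $w_N$, I would start from the classical identity
\begin{equation*}
E_2(\gamma\tau) = (c\tau+d)^2 E_2(\tau) + \frac{6c(c\tau+d)}{\pi i},\qquad \gamma\in\SL_2(\Z),
\end{equation*}
and derive, first for $\gamma\in\Gamma_0(N)$ (using that $\gamma'(N\tau) = N\gamma\tau$ with $\gamma'=\bigl(\begin{smallmatrix} a & Nb \\ c/N & d \end{smallmatrix}\bigr)\in\SL_2(\Z)$) and then separately for $w_N$ (by a direct computation of $E_2(-1/(N\tau))$), the uniform formula
\begin{equation*}
E_2^{(N)}\big|_2\gamma = E_2^{(N)} + \frac{12}{(N+1)\pi i}\cdot\frac{c}{c\tau+d}.
\end{equation*}
The constant in front is exactly $c_N/(\pi i)$, which is precisely what forces the normalization chosen in the definition of $h_N$.

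Next, I would combine this with the modularity of $f_0$ and $f_1$ (of weights $k$ and $k-2$ for $\Gamma_0^+(N)$, as given by Proposition~\ref{prop1.7}) to obtain
\begin{equation*}
f\big|_k\gamma = f + \frac{c_N}{\pi i}\cdot\frac{c\, f_1}{c\tau+d},\qquad f_1\big|_{k-2}\gamma = f_1,
\end{equation*}
and substitute into $\hat{h}_N\|[\gamma](\tau) = (c\tau+d)^{-2}\hat{h}_N(\gamma\tau) - \frac{c}{c\tau+d}$. A short algebraic manipulation should show that the inhomogeneous piece of $f|_k\gamma$ contributes exactly $\frac{c}{c\tau+d}$, which is cancelled by the correction term built into the double-slash operator. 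This yields $\hat{h}_N\|[\gamma] = \hat{h}_N$, and Proposition~\ref{equivariant} then provides the equivariance of $h_N$ on $\Ha$.

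Finally, meromorphicity of $\hat{h}_N$ at every cusp of $\Gamma_0^+(N)$ should be automatic, since $f$ and $f_1$ are holomorphic (hence meromorphic) at the cusps, being quasi-modular and modular forms respectively, and $f_1\not\equiv 0$ because $f$ has depth exactly $1$. I do not expect any serious obstacle in the argument: the most delicate bookkeeping is tracking the constant $c_N$ simultaneously through $\Gamma_0(N)$ and the Fricke involution $w_N$ so that the non-modular contribution in $f|_k\gamma$ cancels exactly against the $-c/(c\tau+d)$ correction of the double-slash operator.
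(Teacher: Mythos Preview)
Your proposal is correct and follows essentially the same route as the paper: compute $\hat{h}_N=\frac{i\pi}{c_N}\frac{f}{f_1}$, use the quasi-modular transformation law of $E_2^{(N)}$ to verify $\hat{h}_N\|[\gamma]=\hat{h}_N$ for $\gamma\in\Gamma_0^+(N)$, invoke Proposition~\ref{equivariant}, and then check meromorphicity at the cusp. The only cosmetic differences are that the paper simply states the transformation law \eqref{eq_1} rather than deriving it, and for the cusp it carries out the explicit computation of $\hat{h}_N\|[\delta]$ (splitting $\hat{h}_N$ as $\tfrac{i\pi}{c_N}\big(\tfrac{f_0}{f_1}+E_2^{(N)}\big)$), whereas you leave that step as ``automatic''---which is fine here since $\Gamma_0^+(N)$ has a single cusp at $\infty$.
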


\begin{proof}
Here \(\hat {h}_2(\tau)=\frac{i\pi}{4}\frac{f(\tau)}{f_1(\tau)}\). Let \(\gamma \in \Gamma_0^+(2)\). Applying the double-slash operator \( \|[\gamma]\) on $\hat{h}_2$ and using the fact that
\begin{equation}
    E_2^{(2)}(\gamma \tau)=(c\tau+d)^2E_2^{(2)}(\tau)+\frac{4}{i\pi}c(c\tau+d) \quad  \text{ for } \gamma=\begin{pmatrix}
    a & b \\
    c & d
    \end{pmatrix} \in \mathrm{SL}_2(\mathbb Z) \cup \Gamma_0^+(2) \label{eq_1},
\end{equation}
we have
\begin{align*}
    \hat {h}_2\|[\gamma](\tau)&=(c\tau+d)^{-2}\hat h_2(\gamma \tau)-c(c\tau+d)^{-1} \\
    &=(c\tau+d)^{-2}\left(\frac{i\pi}{4}\frac{f_0(\gamma \tau)}{f_1(\gamma \tau)}+\frac{i\pi}{4}E_2^{(2)}(\gamma \tau)\right)-c(c\tau+d)^{-1} \\
    &=\frac{i\pi}{4}\frac{f_0(\tau)}{f_1(\tau)}+\frac{i\pi}{4}E_2^{(2)}(\tau)
    =\hat {h_2}(\tau).
\end{align*}
By Proposition \ref{equivariant}, \(h_2(\gamma \tau)=\gamma h_2(\tau)\) for \(\gamma \in \Gamma_0^+(2)\), i.e., \(h_2\) is an equivariant function for $\Gamma_0^+(2)$. In the same manner, since
\begin{equation}
    E_2^{(3)}(\gamma \tau)=(c\tau+d)^2E_2^{(3)}(\tau)+\frac{3}{i\pi}c(c\tau+d) \text{ for } \gamma=\begin{pmatrix}
    a & b \\
    c & d
    \end{pmatrix} \in \mathrm{SL}_2(\mathbb Z) \cup \Gamma_0^+(3) \label{eq_1(3)},
\end{equation}
we also see that \(h_3\) is an equivariant function for $\Gamma_0^+(3)$.

Let \(\delta \in \mathrm{SL}_2(\mathbb R)\) send \(\infty\) to the cusp \(s\) for \(\Gamma_0^+(2)\). We see that
\begin{align*}
    \hat h_2\|[\delta](\tau) &=\frac{i\pi}{4}\frac{f[\delta]_k(\tau)}{f_1[\delta]_{k-2}(\tau)}-c(c\tau+d)^{-1} \\
    &=\frac{i\pi}{4}\frac{f_0[\delta]_k(\tau)+f_1[\delta]_{k-2}(\tau)E_2^{(2)}[\delta]_2(\tau)}{f_1[\delta]_{k-2}(\tau)}-c(c\tau+d)^{-1} \\
    &=\frac{i\pi}{4}\frac{f_0[\delta]_k(\tau)}{f_1[\delta]_{k-2}(\tau)}+\frac{i\pi}{4}E_2^{(2)}[\delta](\tau)-c(c\tau+d)^{-1} \\
    &=\frac{i\pi}{4}\left(\frac{f_0[\delta]_k(\tau)}{f_1[\delta]_{k-2}(\tau)}+E_2^{(2)}(\tau)\right),
\end{align*}
where the last equality follows from \eqref{eq_1}. Note that since the function \(f_0\) and \(f_1\) are modular forms of weight $k$ and $k-2$, respectively, for \(\Gamma_0^+(2)\), they are holomorphic at the cusp \(s\), i.e., \(f_0[\delta]_k\) and \(f_1[\delta]_{k-2}\) are holomorphic at \(\infty\). Therefore, \(\hat h_2\) is meromorphic at the cusp \(s\), so \(h\) is an equivariant form for \(\Gamma_0^+(2)\). The same argument holds for \(h_3\).
\end{proof}

\begin{theorem}\label{thm1.2}
Let \(N=2,3\). Then for  a quasi-modular form \(f\) of weight \(k\) and depth~$1$  for \(\Gamma_0^+(N)\), there are infinitely many \(\Gamma_0^+(N)\)-inequivalent zeros of \(f\) in \(\mathbb H\).
\end{theorem}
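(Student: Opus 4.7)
The plan is to argue by contradiction, adapting the approach of Meher~\cite[Theorem 2.4]{MEH} for $\SL_2(\mathbb Z)$, with the equivariant form $h_N$ from Lemma~\ref{lem_slash23} as the central tool. Suppose for contradiction that $f$ has only finitely many $\Gamma_0^+(N)$-inequivalent zeros $\tau_1,\dots,\tau_n\in\mathbb H$. Then $h_N(\tau)=\tau+\frac{c_N}{i\pi}\frac{f_1(\tau)}{f(\tau)}$ (with $c_N=4$ for $N=2$ and $c_N=3$ for $N=3$) is an equivariant form for $\Gamma_0^+(N)$ whose poles in $\mathbb H$ are precisely the zeros of $f$ off the zero locus of $f_1$. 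Since $f_1$ is a nonzero holomorphic modular form of weight $k-2$ for $\Gamma_0^+(N)$, it has only finitely many inequivalent zeros, so $h_N$ has only finitely many $\Gamma_0^+(N)$-inequivalent poles in $\mathbb H$.

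The next step is to descend $h_N$ to the compactified modular curve. The equivariance $h_N(\gamma\tau)=\gamma h_N(\tau)$ combined with the meromorphy of $\hat h_N$ at cusps (which is part of $\hat h_N$ being an equivariant form) yields a meromorphic self-map $\bar h_N$ of the compactification $\overline{\Gamma_0^+(N)\backslash\mathbb H}$. For $N=2,3$ this compactification has genus zero with a single cusp, so it is isomorphic to $\mathbb P^1$ via the Hauptmodul $j_N$, and $\bar h_N$ is therefore a rational function $R$ of finite degree satisfying $j_N(h_N(\tau))=R(j_N(\tau))$. Examining the $q$-expansion $h_N(\tau)=\tau+\alpha+O(q)$ at $i\infty$ (with $\alpha=\frac{c_Nf_1(i\infty)}{i\pi f(i\infty)}$) shows that $R$ has a simple pole at $\infty$ and that its finite poles lie at $j_N(\tau_1),\dots,j_N(\tau_n)$.

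I would then extract the contradiction via a classification of rational equivariant forms in the spirit of El Basraoui and Sebbar~\cite{equi-s}: any equivariant form for $\Gamma_0^+(N)$ inducing a rational self-map of $\overline{\Gamma_0^+(N)\backslash\mathbb H}$ must be of the shape $\tau+\frac{kg(\tau)}{g'(\tau)}$ for some meromorphic modular form $g$ of weight $k$ for $\Gamma_0^+(N)$. Matching this with the explicit expression for $h_N$ gives $g'/g=\frac{k i\pi f}{c_N f_1}$, and invoking the Ramanujan-type identity $\theta g-\frac{k}{2c_N}E_2^{(N)}g\in M_{k+2}(\Gamma_0^+(N))$ (which follows by comparing the transformation formulas~\eqref{eq_1}, \eqref{eq_1(3)} of $E_2^{(N)}$ with those of $g'$) reduces the identity to $\frac{f_0}{f_1}=\frac{2c_N}{k}\cdot(\text{weight-}(k+2)\text{ modular form})/g$. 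Combined with holomorphy requirements on $g$ and the decomposition $f=f_0+f_1E_2^{(N)}$, this would force $f_1\equiv0$, contradicting the depth-$1$ assumption.

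The main obstacle is rigorously establishing and applying the classification of rational equivariant forms for $\Gamma_0^+(N)$. The corresponding classification for $\SL_2(\mathbb Z)$ in~\cite{equi-s} exploits the polynomial nature of the Ramanujan identity $\theta E_2=(E_2^2-E_4)/12$ and the one-dimensionality of weight-$2$ meromorphic modular forms over $\mathbb C(j)$. For $\Gamma_0^+(N)$ with $N=2,3$, the Ramanujan-type identities~\eqref{Ramanujan2}, \eqref{Ramanujan3} of Section~\ref{sec: common zero} are not of the same purely polynomial form, so the classification requires a delicate analysis using the transformation formulas~\eqref{eq_1}, \eqref{eq_1(3)} of $E_2^{(N)}$ together with the structure of the weight-$2$ meromorphic modular module $\mathbb C(j_N)\cdot(dj_N/d\tau)$ over $\mathbb C(j_N)$.
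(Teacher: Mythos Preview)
Your approach diverges substantially from the paper's and carries two genuine gaps.

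The paper does not argue by contradiction and does not invoke any classification of equivariant forms. Following Saber--Sebbar~\cite{17}, it uses the equivariant form $h_N$ of Lemma~\ref{lem_slash23} directly: one first shows that $h_N$ takes a real value at some non-elliptic $\tau_0\in\mathbb H$ (either $h_N$ has a pole, in which case a $\Gamma_0^+(N)$-translate of that pole maps to a rational number, or $h_N$ is holomorphic and then \cite[Propositions~3.3,~3.4]{equi-s} force $h_N(\mathbb H)\cap\mathbb R\neq\emptyset$). On a small neighbourhood of $\tau_0$ the image $h_N(U)$ is open, hence meets infinitely many rationals $r$ in the $\Gamma_0(N)$-orbit of $\infty$; pulling each such $r$ back to $\infty$ by some $\gamma\in\Gamma_0(N)$ produces infinitely many $\Gamma_0^+(N)$-inequivalent poles of $h_N$, which are zeros of $f$. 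This is elementary and self-contained.

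Your route has a first gap you already flag: the classification of rational equivariant forms for $\Gamma_0^+(N)$ is not available in the paper and would require real work. But there is a second, more serious gap: even granting that classification, your contradiction does not close. Suppose $h_N=\tau+kg/g'$ for some meromorphic modular form $g$ of weight $k$. Matching with $h_N=\tau+\frac{c_N}{i\pi}\frac{f_1}{f}$ and using $\theta g=\vartheta g+\frac{k}{d}E_2^{(N)}g$ (with $d=2c_N$) yields exactly
\[
\frac{f_0}{f_1}=\frac{d}{k}\cdot\frac{\vartheta g}{g},
\]
which is merely a consistent identity between two weight-$2$ meromorphic modular forms. Nothing here forces $f_1\equiv 0$: the classification allows $g$ to be meromorphic, so there is no ``holomorphy requirement'' to leverage, and for any depth-$1$ quasi-modular $f$ one can solve this differential equation for $g$. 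In other words, the classification (in the form you state it) is essentially equivalent to the construction in Lemma~\ref{lem_slash23}, not a constraint beyond it. The contradiction you expect does not materialize, and the argument as written does not prove the theorem.
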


\begin{proof}
We follow the same argument as in the proof of \cite[Theorem 4.1]{17}, and we give the details of the proof for \(N=2\) for readers. We note that the case \(N=3\) can be proved by applying the same argument. Let $f(\tau)=f_0(\tau)+f_1(\tau)E_2^{(2)}$ as in Lemma \ref{lem_slash23} and let \(h(\tau)=\tau+\frac{4}{i\pi}\frac{f_1(\tau)}{f(\tau)}\) be an equivariant form for \(\Gamma_0^+(2)\). There is a point $\tau_0$ $\in \Ha$? with $h(\tau_0) \in \mathbb R$. Indeed, if \(h\) has a pole, say \(\tau_1\), then 
\begin{equation*}
    h(\gamma \tau)=\gamma h(\tau)=\gamma \infty \in \mathbb Q
\end{equation*}
for some \(\gamma \in \Gamma_0^+(2)\). If \(h\) is a equivariant form that is holomorphic in \(\mathbb H\), then by \cite[Proposition 3.3, 3.4]{equi-s}, \(h(\mathbb H)\) is not contained in \(\mathbb H\), also not contained in \(\mathbb H^-\), so \(h(\mathbb H) \cap \mathbb R \not= \emptyset\). In either case, there is a \(\tau_0 \in \mathbb H\) with \(h(\tau_0) \in \mathbb R\), and it is not an elliptic point; otherwise, \(h(\tau_0)\) is equal to \(\tau_0\) or \(\overline{\tau_0}\), both are not in \(\mathbb R\).

Let $V$ be an open set of $\tau_0$ such that there are no elliptic points in \(V\), and $U$ be an open set of $\tau_0$ such that there are no poles of $h$ in $U$. Let $D=h(V \cap U)$, which is open in $H$. Since each orbit of a cusp for $\Gamma_0(2)$ is dense in $\mathbb{R}$, there are infinitely many rational points that are $\Gamma_0(2)$-equivalent to the cusp $\infty$. Let $r$ be a such rational number with $r=h(\tau)$ where $\tau \in U \cap V$ and let $\gamma \in \Gamma_0(2)$ such that $\gamma r = \infty$. Then $\gamma \tau$ is a pole of $h$ and we can take infinitely many such $r$ and $\tau$, and they are all $\Gamma_0^+(2)$-inequivalent. Therefore the function $f$ has infinitely many inequivalent zeros for $\Gamma_0^+(2)$ in $\mathbb H$.
\end{proof}

\section{\texorpdfstring{Common zeros of quasi-modular forms for $N=1,2,3$}{Common zeros of quasi-modular forms for N=1,2,3}}\label{sec: common zero}

\subsection{Common zeros for $N=2,3$}

In 1916, Ramanujan proved the following {\it Ramanujan identity} for $\SL_2(\Z)$:
\begin{equation}\label{rama}
\begin{cases}
    \theta E_2=\frac{1}{12}(E_2^2-E_4), \\
    \theta E_4=\frac{1}{3}(E_2E_4-E_6), \\
    \theta E_6=\frac{1}{2}(E_2E_6-E_4^2).
\end{cases}
\end{equation}
Zudilin \cite{ZUD} showed the analogues of the Ramanujan identities for \(\Gamma_0^+(2)\) and \(\Gamma_0^+(3)\), that is,
\begin{equation}\label{Ramanujan2}
\begin{cases}
    \theta E_2^{(2)}=\frac{1}{8}\left((\Et)^2-\Ef\right), \\
    \theta E_4^{(2)}=\frac{1}{2}\left(\Et \Ef - \Es\right), \\
    \theta E_6^{(2)}=\frac{1}{4}\left(3\Et \Es-2(\Ef)^{2}-\frac{(\Es)^2}{\Ef}\right),
\end{cases}
\end{equation}
and
\begin{equation}\label{Ramanujan3}
    \begin{cases}
    \theta E_2^{(3)}=\frac{1}{6}\left((\Ett)^2-\Eff\right), \\
    \theta E_4^{(3)}=\frac{2}{3}\left(\Ett \Eff - \Ess\right), \\
    \theta E_6^{(3)}=\frac{1}{2}\left(2\Ett \Ess-(\Eff)^{2}-\frac{(\Ess)^2}{\Eff}\right).
    \end{cases}
\end{equation}
In this section, for $N=2,3$, using the Ramanujan identities \eqref{Ramanujan2} and \eqref{Ramanujan3} for \(\Gamma_0^+(N)\), we investigate common zeros of quasi-modular forms for \(\Gamma_0^+(N)\). We first establish Proposition~\ref{gcd} below which let us reduce the question on the common zeros of quasi-modular forms whose Fourier coefficients are algebraic numbers to the question about the greatest common divisor of certain polynomials in $\overline{\Q}[x,y,z]$, based on the idea of Gun and Oesterl\'{e} in \cite{SJ20}. Using Proposition \ref{gcd}, we investigate  common zeros of various quasi-modular forms, as an extension of Meher's result in \cite{MEH}.

\begin{lem}\label{principal}
Let \(N=2\) or 3. For any \(\alpha \in \mathbb H\), the ideal 
\[P_{\alpha}:=\{F \in \overline{\mathbb Q}[x,y,z] : F \left(\Etn(\alpha), \Efn(\alpha),\Esn(\alpha)\right)=0\}
\]
is a principal ideal of $\overline{\Q}[x,y,z]$.
\end{lem}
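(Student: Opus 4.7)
The plan is to recognize $P_\alpha$ as the kernel of the evaluation homomorphism
\[
\mathrm{ev}_\alpha\colon \overline{\Q}[x,y,z] \longrightarrow \C, \qquad F \longmapsto F\bigl(E_2^{(N)}(\alpha),E_4^{(N)}(\alpha),E_6^{(N)}(\alpha)\bigr).
\]
Because the image lands in the field $\C$, the quotient $\overline{\Q}[x,y,z]/P_\alpha$ is an integral domain, so $P_\alpha$ is a prime ideal. Now $\overline{\Q}[x,y,z]$ is a UFD of Krull dimension $3$, and in any UFD every prime ideal of height at most $1$ is principal (height $0$ is $(0)$; height $1$ primes contain an irreducible element, which they must equal by Krull's height theorem). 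Hence the lemma reduces to the inequality $\mathrm{ht}(P_\alpha)\le 1$.

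By the standard formula $\mathrm{ht}(P_\alpha)+\dim\bigl(\overline{\Q}[x,y,z]/P_\alpha\bigr)=3$, this height bound is equivalent to the transcendence-degree bound
\[
\mathrm{trdeg}_{\overline{\Q}}\,\overline{\Q}\bigl(E_2^{(N)}(\alpha),E_4^{(N)}(\alpha),E_6^{(N)}(\alpha)\bigr)\ge 2.
\]
To obtain this, the plan is to invoke Nesterenko's algebraic-independence theorem, which states that for every $\tau\in\Ha$,
\[
\mathrm{trdeg}_{\Q}\,\Q\bigl(e^{2\pi i \tau},E_2(\tau),E_4(\tau),E_6(\tau)\bigr)\ge 3,
\]
and apply it at $\tau=\alpha$ and at $\tau=N\alpha$. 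Using $e^{2\pi i N\alpha}=(e^{2\pi i\alpha})^N$ to identify the two $q$-parameters up to an algebraic relation, together with the explicit linear identities
\[
E_k^{(N)}(\alpha)=\tfrac{1}{1+N^{k/2}}\bigl(E_k(\alpha)+N^{k/2}E_k(N\alpha)\bigr),\qquad k=2,4,6,
\]
one can then transfer the trdeg bound from the level-$1$ values down to the three level-$N$ Eisenstein values at $\alpha$. Alternatively, one may cite a Fricke-group extension of Nesterenko's theorem which directly gives the analogous $\ge 3$ bound with $E_k$ replaced by $E_k^{(N)}$.

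The main obstacle is precisely this transfer step, because each $E_k^{(N)}(\alpha)$ is only a single linear combination of the pair $(E_k(\alpha),E_k(N\alpha))$: knowing the triple $\bigl(E_2^{(N)}(\alpha),E_4^{(N)}(\alpha),E_6^{(N)}(\alpha)\bigr)$ does not by itself pin down the six level-$1$ values. To push the argument through, the plan is to combine the two applications of Nesterenko with the modular-polynomial relation $\Phi_N\bigl(j(\alpha),j(N\alpha)\bigr)=0$ between the level-$1$ Hauptmoduln and with the Fricke-invariance of $E_4^{(N)}$ and $E_6^{(N)}$, which together should cut the compositum field down enough to squeeze out the required transcendence degree $\ge 2$ for the level-$N$ triple.
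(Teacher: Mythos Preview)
Your structural outline is exactly the paper's: $P_\alpha$ is the kernel of the evaluation map, hence prime; in the UFD $\overline{\Q}[x,y,z]$ a prime of height $\le 1$ is principal; and the height bound is equivalent to the transcendence-degree bound $\mathrm{trdeg}_{\overline{\Q}}\overline{\Q}\bigl(E_2^{(N)}(\alpha),E_4^{(N)}(\alpha),E_6^{(N)}(\alpha)\bigr)\ge 2$. The paper then obtains this bound by directly citing the Fricke-group extension of Nesterenko's theorem due to Im and Lee (\cite{IL}, stated here as Theorem~\ref{IL}): for $N=1,2,3$ and any $\alpha\in\Ha$, at least three of $e^{2\pi i\alpha},E_2^{(N)}(\alpha),E_4^{(N)}(\alpha),E_6^{(N)}(\alpha)$ are algebraically independent. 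This is precisely the ``alternative'' you mention in passing, and it finishes the proof in one line.

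Your \emph{primary} plan, however, has a real gap. Applying level-$1$ Nesterenko at $\alpha$ and at $N\alpha$ controls the transcendence degree of the compositum $\overline{\Q}\bigl(e^{2\pi i\alpha},E_k(\alpha),E_k(N\alpha):k=2,4,6\bigr)$, but the three numbers $E_k^{(N)}(\alpha)$ span only a $3$-dimensional subspace of the $6$-dimensional space of level-$1$ values, and neither the modular equation $\Phi_N(j(\alpha),j(N\alpha))=0$ nor Fricke-invariance of $E_4^{(N)},E_6^{(N)}$ gives enough additional algebraic relations to force $\mathrm{trdeg}\ge 2$ down on this subfield. In fact the Fricke extension in \cite{IL} is \emph{not} deduced from the level-$1$ statement by any such transfer; it is proved by re-running Nesterenko's zero-estimate/multiplicity machinery against the level-$N$ Ramanujan system \eqref{Ramanujan2}--\eqref{Ramanujan3} directly. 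So you should drop the transfer sketch and simply invoke the level-$N$ theorem, as the paper does.
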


In order to prove Lemma \ref{principal} we need the following recent result on the algebraic independence for the Eisenstein series, which is a generalization of Nesterenko's theorem \cite{1} for the Fricke groups.

\begin{theorem}\label{IL}{\cite[Theorem 4.6]{IL}} Let $N=1,2,3$. Then,
for any \(\alpha \in \mathbb H\), at least three of the numbers \(e^{2\pi i \alpha}, \Etn(\alpha), \Efn(\alpha), \Esn(\alpha)\) are algebraically independent over \(\mathbb Q\), hence over \(\overline{\mathbb Q}\).
\end{theorem}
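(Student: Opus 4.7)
The plan is to identify $P_\alpha$ as the kernel of the evaluation homomorphism $\varphi_\alpha\colon \overline{\Q}[x,y,z] \to \C$ defined by $F \mapsto F(\Etn(\alpha), \Efn(\alpha), \Esn(\alpha))$. Since $\C$ is an integral domain, $P_\alpha$ is automatically a prime ideal of $\overline{\Q}[x,y,z]$. The strategy is then to show that $P_\alpha$ has height at most $1$, which, combined with the fact that $\overline{\Q}[x,y,z]$ is a unique factorization domain, will force it to be principal: in a UFD every height-$1$ prime is generated by an irreducible element, and the zero ideal is trivially principal.

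The key input is the transcendence degree
\[
d := \mathrm{tr.deg}_{\overline{\Q}}\,\overline{\Q}\bigl(\Etn(\alpha), \Efn(\alpha), \Esn(\alpha)\bigr),
\]
for which I would argue $d \in \{2, 3\}$ using Theorem \ref{IL}. That theorem produces three algebraically independent elements among $\{e^{2\pi i \alpha}, \Etn(\alpha), \Efn(\alpha), \Esn(\alpha)\}$. If the three Eisenstein values are themselves algebraically independent, then $d = 3$; otherwise the independent triple supplied by Theorem \ref{IL} must include $e^{2\pi i \alpha}$ together with exactly two of the Eisenstein values, and in particular those two are algebraically independent, forcing $d \geq 2$. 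Since trivially $d \leq 3$, these are the only possibilities.

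To conclude, I would invoke the standard dimension formula for finitely generated algebras over a field. The quotient $\overline{\Q}[x,y,z]/P_\alpha$ is isomorphic via $\varphi_\alpha$ to the subring $\overline{\Q}[\Etn(\alpha),\Efn(\alpha),\Esn(\alpha)]$ of $\C$, so its Krull dimension equals $d$. Since $\overline{\Q}[x,y,z]$ is a three-dimensional catenary equidimensional domain, $\mathrm{height}(P_\alpha) = 3 - d \in \{0,1\}$. In the case $d=3$ we have $P_\alpha = (0)$, which is principal; in the case $d=2$, $P_\alpha$ is a height-$1$ prime in the UFD $\overline{\Q}[x,y,z]$, hence principal.

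There is no real obstacle here once Theorem \ref{IL} is granted; the argument reduces to a short combinatorial dichotomy on $d$ together with two standard facts from commutative algebra, namely that the Krull dimension of a finitely generated integral domain over a field equals the transcendence degree of its fraction field, and that height-$1$ prime ideals in a UFD are principal.
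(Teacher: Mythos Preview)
Your proposal does not prove the stated theorem at all. Theorem~\ref{IL} is the algebraic-independence result of Nesterenko type (cited from \cite{IL}) asserting that at least three of $e^{2\pi i\alpha}$, $\Etn(\alpha)$, $\Efn(\alpha)$, $\Esn(\alpha)$ are algebraically independent over $\Q$. What you have written is instead a proof of Lemma~\ref{principal}, namely that the ideal $P_\alpha$ is principal; your argument explicitly \emph{assumes} Theorem~\ref{IL} as an input (``once Theorem~\ref{IL} is granted'') rather than establishing it. The paper does not supply a proof of Theorem~\ref{IL} either---it is quoted from \cite{IL}, where it is obtained as a consequence of Nesterenko's method together with the analogue Ramanujan system for $\Gamma_0^+(N)$---so there is nothing in the present paper to compare against for that statement.

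If the intended target was actually Lemma~\ref{principal}, then your argument is correct and follows essentially the same route as the paper's proof: identify $P_\alpha$ as the kernel of the evaluation map, use Theorem~\ref{IL} to bound the transcendence degree of the image from below by $2$, deduce $\mathrm{ht}(P_\alpha)\le 1$, and conclude principality from the UFD property of $\overline{\Q}[x,y,z]$. Your write-up is slightly more explicit about the dichotomy $d\in\{2,3\}$ and about why height-one primes in a UFD are principal, but the underlying reasoning is identical.
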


\begin{proof}[Proof of Lemma \ref{principal}]
The idea of the proof follows the proof of \cite[Proposition 2]{SJ20}, but we give its self-contained proof for readers. Note that a map 
\[
\overline{\mathbb Q}[x,y,z] \longrightarrow \overline{\mathbb Q}\left[\Etn(\alpha),\Efn(\alpha),\Esn(\alpha)\right]
\]
which sends \(x,y,z\) to \(\Etn(\alpha),\Efn(\alpha),\Esn(\alpha)\), respectively, induces an isomorphism
\[
\overline{\mathbb Q}[x,y,z]/P_{\alpha} \cong \overline{\mathbb Q}\left[\Etn(\alpha),\Efn(\alpha),\Esn(\alpha)\right].
\]
By Theorem \ref{IL}, we know that \(\mathrm{trdeg}_{\overline{\mathbb Q}}\overline{\mathbb Q}\left[\Etn(\alpha),\Efn(\alpha),\Esn(\alpha)\right] \geq 2\), so
\begin{align*}
    \mathrm{ht}(P_{\alpha}) ~&\leq ~\dim \overline{\mathbb Q}[x,y,z] - \dim \overline{\mathbb Q}[x,y,z]/P_{\alpha}\\ &=~3-\mathrm{trdeg}_{\overline{\mathbb Q}}\overline{\mathbb Q}\left[\Etn(\alpha),\Efn(\alpha),\Esn(\alpha)\right]~\leq ~1.
\end{align*}
If \(\mathrm{ht}(P_{\alpha})=0\), then \(P_{\alpha}=\{0\}\), so it is principal. If \(\mathrm{ht}(P_{\alpha})=1\), then since \(\overline{\mathbb Q}[x,y,z]\) is a Noetherian factorization domain, \(P_{\alpha}\) is a principal ideal of $\overline{\mathbb Q}[x,y,z]$.
\end{proof}


Let
\begin{equation}\label{rho}
\rho:\overline{\mathbb Q}\left(\Etn,\Efn,\Esn\right) \to \overline{\mathbb Q} (x,y,z)
\end{equation}
be a natural isomorphism which sends \(\Etn,\Efn,\Esn\) to \(x,y,z\), respectively. Such an isomorphism exists since the functions \(\Etn,\Efn,\Esn\) are algebraically independent over \(\C(e^{2\pi i \tau})\) by \cite[Proposition 6]{ZUD}.

\begin{prop}\label{gcd}
For $N=2,3$, let \(f\) and \(g\) be quasi-modular forms in \(\overline{\mathbb Q}(\Etn,\Efn,\Esn)\) which have a common zero in \(\mathbb H\). Write \(\rho(f)=\frac{f_1}{f_0},\text{ and } \rho(g)=\frac{g_1}{g_0}\) in reduced forms, where \(f_0,f_1,g_0,g_1 \in \overline{\mathbb Q}[x,y,z]\). Then \(f_1\) and \(g_1\) have a non-unit common divisor. 
\end{prop}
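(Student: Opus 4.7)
The plan is to show that under the given hypothesis both $f_1$ and $g_1$ lie in the ideal $P_\alpha$ from Lemma~\ref{principal}, where $\alpha\in\mathbb{H}$ is a common zero of $f$ and $g$, and then read off the common divisor from the principality of $P_\alpha$. As meromorphic functions on $\mathbb{H}$ we have
\begin{equation*}
f(\tau)=\frac{f_1(\Etn(\tau),\Efn(\tau),\Esn(\tau))}{f_0(\Etn(\tau),\Efn(\tau),\Esn(\tau))},
\end{equation*}
and an analogous identity for $g$; these are the preimages of $\rho(f)$ and $\rho(g)$ under the isomorphism \eqref{rho}.

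The first step is to verify that $f_1\in P_\alpha$, which I would split into two cases according to whether $f_0(\Etn(\alpha),\Efn(\alpha),\Esn(\alpha))$ vanishes. If it does not vanish, then the assumption $f(\alpha)=0$ directly forces $f_1(\Etn(\alpha),\Efn(\alpha),\Esn(\alpha))=0$, whence $f_1\in P_\alpha$. If it does vanish, then since $f$ is holomorphic and hence finite at $\alpha$ by Definition~\ref{def1.1}, the numerator must vanish at $(\Etn(\alpha),\Efn(\alpha),\Esn(\alpha))$ as well, and again $f_1\in P_\alpha$. Applying the same dichotomy to $g$ yields $g_1\in P_\alpha$.

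For the second step, by Lemma~\ref{principal} we may write $P_\alpha=(h)$ for some $h\in\overline{\mathbb{Q}}[x,y,z]$, so $h$ divides both $f_1$ and $g_1$. It remains to ensure that $h$ can be chosen to be a non-unit. When $P_\alpha\neq(0)$, the ideal $P_\alpha$ is proper (it is the kernel of the evaluation map into the integral domain $\overline{\mathbb{Q}}[\Etn(\alpha),\Efn(\alpha),\Esn(\alpha)]\subset\mathbb{C}$, so it does not contain $1$), and any generator of a proper ideal in $\overline{\mathbb{Q}}[x,y,z]$ is automatically a non-unit. When instead $P_\alpha=(0)$, one has $f_1=g_1=0$, and any non-unit of $\overline{\mathbb{Q}}[x,y,z]$ trivially serves as a common divisor.

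I do not anticipate any substantive obstacle here; the only mildly delicate point is the possibility that $f_0$ or $g_0$ vanishes at the evaluated triple, which is absorbed by the holomorphy of the quasi-modular forms on $\mathbb{H}$. The genuine content has already been pushed into Lemma~\ref{principal}, whose proof rests on the algebraic-independence input of Theorem~\ref{IL}.
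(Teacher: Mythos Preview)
Your proof is correct and follows essentially the same approach as the paper: show $f_1,g_1\in P_\alpha$ and then invoke Lemma~\ref{principal}. The paper's own proof compresses your first step into the single word ``obvious,'' whereas you spell out the case distinction on whether $f_0$ vanishes at the evaluated triple and handle the non-unit issue explicitly; these details are sound and add nothing beyond what the paper intends.
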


\begin{proof} 
Denote the common zero of \(f\) and \(g\) by \(\alpha\). It is obvious that \(f_1,g_1 \in P_{\alpha}\), so they have a non-unit common divisor, namely, a generator of \(P_{\alpha}\).
\end{proof}


\begin{cor}
For $N=2$ or $3$, let $f$ and $g$ be a quasi-modular forms with algebraic Fourier coefficients for $\Gamma_0^+(N).$ Let $h:=\rho^{-1}(\ngcd(\rho(f),\rho(g)))$ where $\ngcd(F,G)$ is the gcd of the numerators of reduced forms of $F$ and $G$ in $\overline{\Q}(x,y,z)$. Then
\begin{align*}
    \Zc(f,g)=\Zc(h),
\end{align*}
where $\Zc(f_1,f_2, \ldots, f_r)$ denotes the common zero set of $f_1,f_2, \ldots, f_r$ in $\Ha$.
\end{cor}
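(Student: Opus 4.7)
The plan is to prove the equality of sets by establishing both inclusions. The key preliminary fact I will establish first is the following: if $\varphi \in \overline{\mathbb Q}(\Etn, \Efn, \Esn)$ represents a holomorphic quasi-modular form and $\rho(\varphi) = \varphi_1/\varphi_0$ is written in reduced form in $\overline{\mathbb Q}[x,y,z]$, then the denominator value $\varphi_0(\Etn(\alpha), \Efn(\alpha), \Esn(\alpha))$ is nonzero for every $\alpha \in \mathbb H$. To see this, suppose $\varphi_0 \in P_\alpha$; since $\varphi$ is holomorphic on $\mathbb H$, the numerator $\varphi_1$ must also lie in $P_\alpha$, for otherwise $\varphi$ would have a pole at $\alpha$. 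Then Lemma \ref{principal} provides a principal generator $p$ of $P_\alpha$ that divides both $\varphi_0$ and $\varphi_1$. By the coprimality coming from the reduced form, $p$ must be a unit, forcing $P_\alpha = \overline{\mathbb Q}[x,y,z]$; this contradicts $1 \notin P_\alpha$.

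With this tool in hand, I would write $\rho(f) = f_1/f_0$ and $\rho(g) = g_1/g_0$ in reduced forms and set $d = \ngcd(\rho(f), \rho(g)) = \gcd(f_1, g_1)$, so that $h = \rho^{-1}(d)$. For the inclusion $\Zc(h) \subseteq \Zc(f,g)$, I would take $\alpha \in \Zc(h)$, i.e.\ $d \in P_\alpha$. Since $d$ divides both $f_1$ and $g_1$ in $\overline{\mathbb Q}[x,y,z]$, it follows that $f_1, g_1 \in P_\alpha$; combined with the non-vanishing of $f_0$ and $g_0$ at $\alpha$ from the preliminary step, this yields $f(\alpha) = g(\alpha) = 0$. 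For the reverse inclusion $\Zc(f,g) \subseteq \Zc(h)$, I would take a common zero $\alpha$ of $f$ and $g$; the non-vanishing of the denominators forces $f_1, g_1 \in P_\alpha$, so letting $p$ be the principal generator of $P_\alpha$ provided by Lemma \ref{principal} we obtain $p \mid f_1$ and $p \mid g_1$, hence $p \mid \gcd(f_1, g_1) = d$. Therefore $d \in (p) = P_\alpha$, which is exactly $h(\alpha) = 0$.

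The hardest step will be the preliminary fact on non-vanishing of denominators, which is the only place the deep input enters: it relies crucially on the principality assertion of Lemma \ref{principal}, which itself depends on the algebraic independence theorem of Theorem \ref{IL}. Once this step is secured, the two inclusions reduce to routine bookkeeping of divisibilities in the unique factorization domain $\overline{\mathbb Q}[x,y,z]$, using only the hypothesis that the fractions $\rho(f)$ and $\rho(g)$ are presented in reduced form and the defining property of $P_\alpha$.
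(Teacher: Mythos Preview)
Your proposal is correct and follows precisely the argument the paper leaves implicit: the corollary is stated without proof immediately after Proposition~\ref{gcd}, intended as a direct consequence of Lemma~\ref{principal} and the proof of Proposition~\ref{gcd}. You have correctly isolated and justified the one nontrivial point the paper glosses over, namely that the denominator $\varphi_0$ of a reduced representation of a holomorphic quasi-modular form cannot vanish at any $\alpha\in\mathbb H$; once this is secured, both inclusions are exactly the routine divisibility arguments in the UFD $\overline{\mathbb Q}[x,y,z]$ that you describe.
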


Since \(x\) and \(c_0(y,z)+c_1(y,z)x+\cdots+c_n(y,z)x^n\) for some \(c_i(y,z) \in \overline{\mathbb Q}[y,z]\) have no non-unit common divisor if \(c_0(y,z)\neq 0\), we get the following proposition.

\begin{prop}\label{prop_ratandE2} Let \(N=2\) or $3$.
Let \(f=f_0+f_1\Etn\cdots+f_n\left(\Etn\right)^n\) be a quasi-modular form of depth \(n\) for \(\Gamma_0^+(N)\) such that \(f_0\) is not identically zero and \(f_i \in \overline{\mathbb Q}\left(\Efn,\Esn\right)\) for \(1 \leq i \leq n\). Then, there is no common zero of \(f\) and the Eisenstein series \(\Etn\). In particular, there is no common zero of \(\theta^m \Etn\) and \(\Etn\) for any positive integer \(m\). 
\end{prop}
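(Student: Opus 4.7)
The plan is to apply Proposition \ref{gcd} directly. First, I would compute $\rho(\Etn) = x$ and $\rho(f) = \sum_{i=0}^n c_i(y,z)\,x^i$ with $c_i = \rho(f_i) \in \overline{\mathbb Q}(y,z)$; since $\rho$ is an isomorphism, the hypothesis $f_0 \not\equiv 0$ gives $c_0 \neq 0$. Next, I would bring $\rho(f)$ into reduced form $F(x,y,z)/G(y,z)$ with $F \in \overline{\mathbb Q}[x,y,z]$ and $G \in \overline{\mathbb Q}[y,z]$ coprime in $\overline{\mathbb Q}[x,y,z]$. A direct check shows $F(0,y,z) = G(y,z)\,c_0(y,z) \neq 0$, so $x \nmid F$. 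Since $x$ is a prime element of the UFD $\overline{\mathbb Q}[x,y,z]$, this yields $\gcd(x, F) = 1$, and Proposition \ref{gcd} then precludes any common zero of $f$ and $\Etn$ in $\Ha$.

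For the ``in particular'' claim, I would verify that $\theta^m \Etn$ satisfies the hypothesis of the main statement. Using the Ramanujan identities \eqref{Ramanujan2} and \eqref{Ramanujan3}, one sees that $\theta$ preserves the subring $\overline{\mathbb Q}(\Efn, \Esn)[\Etn]$ of $\overline{\mathbb Q}(\Etn, \Efn, \Esn)$; note that the $(\Esn)^2/\Efn$ term in $\theta \Esn$ is precisely the reason we need rational functions rather than polynomials in $\Efn, \Esn$. Hence, by induction, $\theta^m \Etn = \sum_i f_i^{(m)}(\Etn)^i$ with $f_i^{(m)} \in \overline{\mathbb Q}(\Efn, \Esn)$. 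The base case $m=1$ is immediate: $\theta \Etn$ has constant-in-$\Etn$ term $-\frac{1}{8}\Ef$ (respectively $-\frac{1}{6}\Eff$) for $N=2$ (resp.\ $N=3$), which is nonzero.

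The main technical obstacle is the inductive step: verifying $f_0^{(m)} \not\equiv 0$ for all $m \geq 2$. Applying $\theta$ via the Ramanujan identities and setting $\Etn = 0$, the recursion for the new constant-in-$\Etn$ term $f_0^{(m+1)}$ depends on \emph{both} $f_0^{(m)}$ and the linear-in-$\Etn$ coefficient $f_1^{(m)}$, so a plain induction on $f_0^{(m)}$ alone is insufficient. My plan is to track several coefficients simultaneously and exploit the weight grading $\deg \Etn = 2$, $\deg \Efn = 4$, $\deg \Esn = 6$ (under which $\theta$ raises weight by $2$), which confines $f_0^{(m)}$ to the finite-dimensional $\overline{\mathbb Q}$-span of Laurent monomials $(\Efn)^a (\Esn)^b$ of total weight $2m+2$. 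Non-vanishing can then be established by a careful bookkeeping of leading monomials across the iterations, or, alternatively, by a direct small-$m$ computation combined with a closed-form recursion tracking the pair $(f_0^{(m)}, f_1^{(m)})$.
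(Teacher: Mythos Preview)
Your argument for the main statement is correct and essentially identical to the paper's: both reduce to observing that $\rho(\Etn)=x$ and that the numerator of the reduced form of $\rho(f)$ has nonzero constant term in $x$, hence is coprime to $x$, and then invoke Proposition~\ref{gcd}.

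For the ``in particular'' clause you are actually being more careful than the paper, which simply asserts it without verifying that $\theta^m\Etn$ satisfies the hypothesis $f_0^{(m)}\not\equiv 0$. You are right that this is the crux and that it does not follow by a one-line induction. However, your two concrete proposals both have problems. The suggestion to track the pair $(f_0^{(m)}, f_1^{(m)})$ does not close: writing $\theta$ in terms of the Serre derivative $\vartheta$ gives
\[
f_0^{(m+1)}=\vartheta f_0^{(m)}-\tfrac{1}{d}\Efn f_1^{(m)},\qquad
f_1^{(m+1)}=\vartheta f_1^{(m)}+\tfrac{2m+2}{d}f_0^{(m)}-\tfrac{2}{d}\Efn f_2^{(m)},
\]
so the recursion for $f_1$ drags in $f_2$, and so on up the chain; no finite truncation of the coefficient list is closed under $\theta$. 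The alternative of ``bookkeeping leading monomials'' is also not straightforward: for $N=2$ one computes $f_0^{(1)}=-\tfrac18\Ef$, $f_0^{(2)}=\tfrac1{16}\Es$, $f_0^{(3)}=\tfrac1{256}\bigl(-5(\Ef)^2-4(\Es)^2/\Ef\bigr)$, $f_0^{(4)}=\tfrac{3}{128}\Ef\Es$, and neither the maximal $\deg_z$ nor the minimal $\deg_y$ of $f_0^{(m)}$ is monotone in $m$, so a naive leading-term argument will not go through without a more refined invariant. In short, the main statement is handled exactly as in the paper, but the ``in particular'' part---which the paper leaves unjustified---still lacks a complete argument in your proposal as well.
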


\begin{proof}
Referring to \eqref{rho} for $\rho$, since \(\rho(\Etn)=x\) and the numerator of a reduced form of \(\rho(f)=\rho(f_0)+\rho(f_1)x+\cdots+\rho(f_n)x^n\) has no non-unit common divisor, Proposition \ref{gcd} implies that \(\Etn\) and \(f\) have no common divisor.
\end{proof}

\begin{prop}\label{prop1.3}
Let \(N=2\) or 3 and \(\alpha \in \mathbb H\) be such that \(j_N(\alpha)\) is an algebraic number. Then \(\theta^nE_2^{(N)}(\alpha)\) is trascendental and hence non-zero for any positive integer \(n\).
\end{prop}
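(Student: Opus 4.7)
The plan is to adapt Meher's argument for $\SL_2(\Z)$ in \cite{MEH} by combining the Ramanujan identities \eqref{Ramanujan2}, \eqref{Ramanujan3} with Theorem~\ref{IL}. First, using Proposition~\ref{prop1.7}(ii) with $\phi = E_2^{(N)}$, I would write
\[
\theta^n E_2^{(N)} = \sum_{r=0}^{n+1} g_r \cdot (E_2^{(N)})^r,
\]
where $g_r \in M_{2n+2-2r}(\Gamma_0^+(N))$, and then prove by induction on $n$ (using the Ramanujan identity $\theta E_2^{(N)} = \tfrac{1}{c}((E_2^{(N)})^2 - E_4^{(N)})$ with $c = 8$ for $N=2$ and $c = 6$ for $N=3$) that the top coefficient $g_{n+1}$ equals the non-zero rational constant $n!/c^n$. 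Indeed, $g_{n+1}$ has weight $0$ hence is constant, so $\theta g_{n+1} = 0$; applying $\theta$ to the decomposition shows that the only contribution to the $(n+2)$-th power of $E_2^{(N)}$ in $\theta^{n+1} E_2^{(N)}$ is $\tfrac{(n+1) g_{n+1}}{c}(E_2^{(N)})^{n+2}$, which is non-zero by the inductive hypothesis.

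Next, I would establish that $E_2^{(N)}(\alpha)$ is transcendental over $K := \overline{\Q}(E_4^{(N)}(\alpha), E_6^{(N)}(\alpha))$. Since $j_N$ is a rational function of $E_4^{(N)}$ and $E_6^{(N)}$ and $j_N(\alpha) \in \overline{\Q}$, we have $\mathrm{trdeg}_{\overline{\Q}} K \leq 1$. Combined with Theorem~\ref{IL}, which gives
\[
\mathrm{trdeg}_{\overline{\Q}} \overline{\Q}\bigl(e^{2\pi i \alpha}, E_2^{(N)}(\alpha), E_4^{(N)}(\alpha), E_6^{(N)}(\alpha)\bigr) \geq 3,
\]
this forces $E_2^{(N)}(\alpha)$ and $e^{2\pi i \alpha}$ to be algebraically independent over $K$; in particular, $E_2^{(N)}(\alpha)$ is transcendental over $K$. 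Since the graded ring of modular forms for $\Gamma_0^+(N)$ with algebraic coefficients is $\overline{\Q}[E_4^{(N)}, E_6^{(N)}]$ for $N=2,3$, each value $g_r(\alpha)$ lies in $\overline{\Q}[E_4^{(N)}(\alpha), E_6^{(N)}(\alpha)] \subseteq K$. Hence, if $\theta^n E_2^{(N)}(\alpha) = \beta$ were algebraic, then $E_2^{(N)}(\alpha)$ would be a root of the non-zero polynomial $\tfrac{n!}{c^n} X^{n+1} + \sum_{r=0}^{n} g_r(\alpha) X^r - \beta \in K[X]$ of degree $n+1 \geq 2$, contradicting its transcendence over $K$.

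The hard part will be the first step's requirement that the coefficients $g_r$ lie in $\overline{\Q}[E_4^{(N)}, E_6^{(N)}]$, i.e., are genuine polynomials rather than rational functions in $E_4^{(N)}$ and $E_6^{(N)}$. Iterating the Ramanujan identity introduces the non-polynomial term $(E_6^{(N)})^2 / E_4^{(N)}$ coming from the formula for $\theta E_6^{(N)}$, so the direct expansion of $\theta^n E_2^{(N)}$ as a rational function in $E_2^{(N)}, E_4^{(N)}, E_6^{(N)}$ carries apparent powers of $E_4^{(N)}$ in the denominator. These must cancel after simplification because $\theta^n E_2^{(N)}$ is a holomorphic quasi-modular form; this cancellation reflects the structure theorem that the modular form ring for $\Gamma_0^+(N)$ is freely generated by $E_4^{(N)}$ and $E_6^{(N)}$ for $N=2,3$, ensuring that each $g_r(\alpha)$ is well-defined in $K$ even at the exceptional points $\alpha$ where $E_4^{(N)}(\alpha) = 0$.
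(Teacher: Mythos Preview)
Your overall strategy matches the paper's: express $\theta^n E_2^{(N)}$ as $\sum_r g_r(E_2^{(N)})^r$ with a non-zero constant leading coefficient, use that $j_N(\alpha)\in\overline{\Q}$ forces $\mathrm{trdeg}_{\overline{\Q}}\overline{\Q}\bigl(E_4^{(N)}(\alpha),E_6^{(N)}(\alpha)\bigr)\le 1$, and then invoke Theorem~\ref{IL}. The paper argues in exactly this way, writing the $f_i$ as \emph{rational functions} in $E_4^{(N)},E_6^{(N)}$ and concluding that $E_2^{(N)}(\alpha)$ is algebraic over $\Q\bigl(E_4^{(N)}(\alpha),E_6^{(N)}(\alpha),\theta^nE_2^{(N)}(\alpha)\bigr)$.

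There is, however, a genuine error in your ``hard part''. The claim that the graded ring of modular forms for $\Gamma_0^+(N)$, $N=2,3$, equals $\overline{\Q}[E_4^{(N)},E_6^{(N)}]$ is false, and the asserted cancellation of the $E_4^{(N)}$-denominators does \emph{not} occur. Already for $n=3$ and $N=2$ the paper (Proposition~\ref{prop: simple zero 1}) records
\[
\theta^3 E_2^{(2)}=\tfrac{1}{256}\Bigl(3(E_2^{(2)})^4-18(E_2^{(2)})^2E_4^{(2)}+24E_2^{(2)}E_6^{(2)}-5(E_4^{(2)})^2-4\,\tfrac{(E_6^{(2)})^2}{E_4^{(2)}}\Bigr),
\]
and Lemma~\ref{lem: y powers N=2} shows in general that $y^{\ell_r}(D^{(2)})^r x\in\Q[x,y,z]\setminus y\Q[x,y,z]$ with $\ell_r>0$ once $r\ge 3$. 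Since $E_4^{(N)},E_6^{(N)}$ are algebraically independent, a term like $(E_6^{(2)})^2/E_4^{(2)}$ cannot be rewritten as a polynomial in $E_4^{(2)},E_6^{(2)}$; in particular it is a holomorphic weight-$8$ modular form lying outside $\C[E_4^{(2)},E_6^{(2)}]$. So neither of your two justifications (cancellation of denominators, polynomial ring structure) is valid.

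The repair is exactly what the paper does: drop the polynomial claim and simply keep $g_r\in\overline{\Q}\bigl(E_4^{(N)},E_6^{(N)}\bigr)$. Then $g_r(\alpha)\in K$ still follows, because the only possible denominator is a power of $E_4^{(N)}$, and from the Ramanujan identity for $\theta E_6^{(N)}$ one sees that any zero of $E_4^{(N)}$ would also be a zero of $E_6^{(N)}$, which Theorem~\ref{IL} forbids; hence $E_4^{(N)}$ is non-vanishing on $\mathbb H$ and the rational expressions evaluate into $K$. With this correction your argument goes through and coincides with the paper's.
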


\begin{proof}
Note that \(j_N \in \overline{\mathbb Q}\left(\Efn,\Esn\right)\). Suppose \(j_N(\alpha)\) is an algebraic number, where \(\alpha \in \mathbb H\). This means \(\Efn(\alpha)\) and \(\Esn(\alpha)\) are algebraically dependent over \(\mathbb Q\). If \(e^{2\pi i \alpha}\) and \(\theta^n \Etn(\alpha)\) are algebraically dependent over \(\mathbb Q\), then
\begin{equation*}
    \text{tr}\deg _{\mathbb Q} \mathbb Q\left(e^{2 \pi i \alpha},\Efn(\alpha),\Esn(\alpha),\theta^n \Etn(\alpha)\right) \leq 2.
\end{equation*}
Note that \(\theta^n \Etn =f_0 +f_1\Etn\cdots+f_{n+1}(\Etn)^{n+1}\), where \(f_i\)'s are modular forms of weight $2(n-i+1)$ in \(\overline{\mathbb Q}\left(\Efn,\Esn\right)\). That is, \(f_i\) can be written as
\begin{equation*}
    f_i=\frac{\sum_{u,v} \alpha_{u v}^{(i)} \left(\Efn\right)^{u}\left(\Esn\right)^v}{\sum_{r,s} \beta_{r s}^{(i)} \left(\Efn\right)^r\left(\Esn\right)^s},
\end{equation*}
where \(\alpha^{(i)}_{u v}\) and \(\beta^{(i)}_{r s}\) are algebraic numbers. Thus \(\Etn\) is algebraic over \(\mathbb Q\left(\Efn,\Esn,\theta^n \Etn\right)\) and so
\begin{align*}
    \text{tr}\deg_{\mathbb Q}\mathbb Q \left(e^{2\pi i \alpha},\right.&\left.\Etn(\alpha), \Efn(\alpha),\Esn(\alpha)\right) \\
    & \leq \text{tr}\deg_{\mathbb Q}\mathbb Q\left(e^{2\pi i \alpha},\Efn(\alpha),\Esn(\alpha), \theta^n \Etn(\alpha)\right) \leq 2,
\end{align*}
which contradicts Theorem \ref{IL}. Hence, \(e^{2\pi i \alpha}\) and \(\theta^n \Etn(\alpha)\) are algebraically independent over $\Q$, and in particular, $\theta^n E_2^{(N)}(\alpha)$ is a transcendental number.
\end{proof}

Let 
\begin{equation}\label{eqn: m and d}m = 
    \begin{cases}
        3 & \text{if }N=1, \\
        4 & \text{if }N=2, \\
        6 & \text{if }N=3,
    \end{cases} \quad  \text{ and }\quad 
    d =  
    \begin{cases}
        12 & \text{if }N=1, \\
        \phantom{1}8 & \text{if }N=2, \\
        \phantom{1}6 & \text{if }N=3.
    \end{cases}
\end{equation}

For $N=1,2,3$, let \(f\) be a modular form of weight \(k\) for \(\Gamma_0^+(N)\). Define \[\vartheta f:=\theta f -\frac{k}{d}\Etn f.\]

By the equations (\ref{eq_1}) and \eqref{eq_1(3)} in the proof of Lemma \ref{lem_slash23}, the operator \(\vartheta\) sends \(f\) to a modular form of weight \(k+2\) for \(\Gamma_0^+(N)\). This derivation is a kind of the {\it Serre derivative}. In general, the Serre derivative of a non-cocompact Fuchsian group $\Gamma$ is defiened as follows: Let $\phi$ be a quasi-modular form of weight 2 and depth 1 for $\Gamma$ which is not modular. By multiplying by an appropriate constant, we may assume that $Q_1(\phi)=1$. Then, the Serre derivative of weight $k$ for $\Gamma$ is
\begin{align*}
    \vartheta_{\Gamma}:=\theta -\frac{k}{2\pi i}\phi.
\end{align*}
See \cite[p.48 and p.62]{BRU} for more details. 

Let \(\Delta_N=(\Efn)^3-(\Esn)^2\).


\begin{lem}\label{lem4.2}
For \(N=1\), 2 or 3, let \(f\) be a modular form of weight \(k\) for \(\Gamma_0^+(N)\), which is expressed as a polynomial in \(\Efn\) and \(\Esn\) over \(\mathbb C\). 
\begin{enumerate}[(a)]
    \item If \(N=1\), then \(\vartheta f\) is identically zero if  and only if \(f \in \mathbb C \Delta_1^r\) for some \(r \geq 1\).
    \item If \(N=2\) or 3, then \(\vartheta f\) is not identically zero.
\end{enumerate}
\end{lem}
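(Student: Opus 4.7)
My plan is to transport the problem, via the isomorphism $\rho$ of \eqref{rho}, into a question about the kernel of an explicit derivation on $\mathbb{C}[y, z]$. Write $P := \rho(f) \in \mathbb{C}[y, z]$, which is weighted homogeneous of weight $k$ when $y, z$ are given weights $4, 6$. By the chain rule $\rho(\theta f) = P_y\, \rho(\theta \Efn) + P_z\, \rho(\theta \Esn)$ and the Ramanujan identities \eqref{rama}, \eqref{Ramanujan2}, \eqref{Ramanujan3}, the $x$-coefficient of $\rho(\vartheta f) = \rho(\theta f) - \tfrac{k}{d} x P$ is a constant multiple of $4y P_y + 6z P_z - kP$, which vanishes by Euler's identity for weighted homogeneous $P$. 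The remaining $x$-free part equals, up to a unit in $\mathbb{C}(y, z)$, a derivation $D_N$ applied to $P$:
\begin{equation*}
D_1 = 2z\,\partial_y + 3y^2\,\partial_z, \quad D_2 = 2yz\,\partial_y + (2y^3 + z^2)\,\partial_z, \quad D_3 = 4yz\,\partial_y + 3(y^3 + z^2)\,\partial_z.
\end{equation*}
Thus the lemma reduces to determining $\ker D_N$ on weighted-homogeneous polynomials in positive weight.

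For $N = 2$ (the case $N = 3$ is entirely parallel), expanding yields $D_2 P = \sum c_{a, b}\bigl((2a + b)\, y^a z^{b + 1} + 2b\, y^{a + 3}z^{b - 1}\bigr)$. Collecting the coefficient of $y^\alpha z^\beta$ and setting it to zero splits into two types of relations. The \emph{boundary} relations, arising from $\alpha \in \{0, 1, 2\}$ where only the first summand contributes, read $(2\alpha + \beta - 1)\, c_{\alpha, \beta - 1} = 0$; since the coefficient $2\alpha + \beta - 1$ is positive throughout the support (for $k > 0$), these force $c_{0, b} = 0$ for $b \geq 1$ and $c_{1, b} = c_{2, b} = 0$ for all $b$. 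The \emph{main} relations $(2a + b)\, c_{a, b} + 2(b + 2)\, c_{a - 3, b + 2} = 0$ for $a \geq 3$ then propagate this vanishing: at $a = 3, 4, 5$ the second term drops out because $c_{0, \bullet}, c_{1, \bullet}, c_{2, \bullet} = 0$, forcing $c_{3, b} = c_{4, b} = c_{5, b} = 0$; an induction on $a$ extends this to all $a \geq 3$. Hence $P = 0$, proving part (b).

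For part (a), the expansion is $D_1 P = \sum c_{a, b}\bigl(2a\, y^{a - 1}z^{b + 1} + 3b\, y^{a + 2}z^{b - 1}\bigr)$; the crucial difference is that the coefficient $2a$ vanishes at $a = 0$, so the analogous boundary analysis only forces $c_{1, b} = c_{2, b} = 0$ and leaves $c_{0, b}$ unconstrained. A direct computation yields $D_1(y^3 - z^2) = 6y^2 z - 6y^2 z = 0$, hence $(y^3 - z^2)^r = \rho(\Delta_1^r) \in \ker D_1$ for every $r \geq 1$ by the Leibniz rule. Running the main relation $2a\,c_{a, b} + 3(b + 2)\,c_{a - 3, b + 2} = 0$ forward from $(a, b) = (0, k/6)$ along the staircase of support points and imposing the $\beta = 0$ boundary constraint $c_{a, 1} = 0$ (which fires exactly when $k \equiv 2 \pmod 4$), one verifies that the recursion closes consistently if and only if $12 \mid k$, yielding a $1$-dimensional solution space $\mathbb{C}\rho(\Delta_1^{k/12})$; otherwise $P = 0$. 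This matches the claim $f \in \mathbb{C}\Delta_1^r$ for $r = k/12$.

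The key obstacle, and the reason (a) and (b) have different conclusions, is the appearance of the denominator $(\Esn)^2 / \Efn$ in the Ramanujan identity for $\theta \Esn$ when $N = 2, 3$. Transported through $\rho$, this denominator produces the extra monomial $z^2$ in the $\partial_z$-coefficient of $D_N$, which replaces the vanishing boundary coefficient $2a$ (for $N = 1$) by a strictly positive $2a + b$ or $4a + 3b$ whenever $b \geq 1$. This single algebraic fact is precisely what kills the analogue of the $\Delta_1$-kernel in the cases $N = 2, 3$.
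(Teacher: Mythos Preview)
Your argument is correct and takes a genuinely different route from the paper's. The paper rewrites $\vartheta f \equiv 0$ as $D_N' \rho(f) = \tfrac{k}{d}\, xy\, \rho(f)$ for the full three-variable Ramanujan derivation and then invokes the classification of its solutions from \cite[Lemma~4.1]{1} (for $N=1$) and the argument of \cite[Theorem~3.2]{IL} (for $N=2,3$); in the latter cases the unique candidate $c\,\Delta_N^{k/d}\bigl(\Efn\bigr)^{-2k(m-3)/(d(m-2))}$ carries a genuine power of $\Efn$ in the denominator, contradicting holomorphy. You instead strip off the $x$-linear part of $\rho(\vartheta f)$ via Euler's identity for weighted-homogeneous $P$, reducing to the two-variable derivations $D_N$ on $\mathbb{C}[y,z]$, and settle $\ker D_N$ by an explicit coefficient recursion. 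Your approach is entirely self-contained and makes the mechanism transparent: the extra $z^2$ in the $\partial_z$-coefficient of $D_2,D_3$ (coming from the $z^2/y$ term in the identity for $\theta\Esn$) replaces the vanishing boundary factor $2a$ at $a=0$ by a strictly positive one, which is exactly what collapses the would-be $\Delta$-kernel for $N=2,3$. The paper's route is shorter once the cited structural results are available and sidesteps the case analysis on $k \bmod 12$. One small omission in your sketch for~(a): when $6 \nmid k$ there is no monomial with $a=0$ in the support of $P$, so the recursion chain begins at $a_{\min}\in\{1,2\}$ and is annihilated immediately by the boundary relations $c_{1,b}=c_{2,b}=0$ you already established; this is trivial to fill in but should be said explicitly alongside the $k\equiv 2\pmod 4$ case you do discuss.
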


\begin{proof}
    (a) Suppose \(\vartheta f \equiv 0\), i.e., \(\theta f = \dfrac{k}{12}E_2^{(1)}f\). By the Ramanujan identity \eqref{rama}, we have
    \[
    D'\rho(f)=\dfrac{k}{12}x\rho(f),
    \]
    where 
    \begin{align*}
        D'&=q\dfrac{\partial}{\partial q}+\dfrac{1}{12}(x^2-y)\dfrac{\partial}{\partial x}+\dfrac{1}{3}(xy-z)\dfrac{\partial}{\partial y}+\dfrac{1}{2}(xz-y^2)\dfrac{\partial}{\partial z}.
    \end{align*}
    By the proof of \cite[Lemma 4.1]{1}, \(\rho(f)\) is of the form \(c\Delta^r y^s\) for \(r,s \in \mathbb Z_{\geq 0}\) and a constant $c\in \C$,  where \(\Delta=\rho(\Delta_1)=y^3-z^2\). Moreover, since \(\rho(f) \in \mathbb C[y,z]\), \(\rho(f)\) is of the form \(c\Delta^r\). Conversely, let \(f=c\Delta_1^r\) for \(r \geq 1\). Then it is easy to see that
    \[
    \theta f=\theta(c\Delta_1^r)=rcE_2^{(1)}\Delta_1^r=rE_2^{(1)}f.
    \]
    Since \(\Delta_1\) has weight $12$ for $\SL_2(\Z)$, \(f\) has weight \(k=12r\), so \(r=k/12\). This proves \(\vartheta f \equiv 0\).
    
    (b) Let $m$ and $d$ be integers as in \eqref{eqn: m and d}. Suppose \(\vartheta f \equiv 0\), i.e., \(\theta f=\dfrac{k}{d}\Etn f\). Let
    \begin{align*}
        D_N'&=qy\dfrac{\partial}{\partial q}+\dfrac{m-2}{4m}(x^2-y)\dfrac{\partial}{\partial x}+\dfrac{m-2}{m}(xy-z)\dfrac{\partial}{\partial y}+\dfrac{3(m-2)}{2m}\left(xyz-\dfrac{1}{2}y^3-\dfrac{m-3}{m}z^2\right)\dfrac{\partial}{\partial z}.
    \end{align*}
    By the Ramanujan identities \eqref{Ramanujan2} and \eqref{Ramanujan3}, we have \(\rho(\Efn \theta f)=D_N' \rho(f)\), thus
    \[
    D_N' \rho(f)=\dfrac{k}{d}x y \rho(f).
    \]
    By the same argument as in the proof of \cite[Theorem 3.2]{IL}, \(f\) must be of the form
    \[
    f=c \Delta_N^{\frac{k}{d}} (\Efn)^{-\frac{2k(m-3)}{d(m-2)}}, \quad \text{ for some } c\in \mathbb C.
    \]
    Since $\rho(f) \in \mathbb C[y,z]$, we have that \(\dfrac{k}{d} \in \mathbb Z\). Thus for $N=2$, we have 
    \begin{align*}
        f=c\left(\frac{\Delta_2}{E_4^{(2)}}\right)^{\frac{k}{8}},
    \end{align*}
    which is not holomorphic form. Similarly for $N=3$,
    \begin{align*}
        f=c\left(\frac{\Delta_3}{(E_4^{(3)})^{\frac{3}{2}}}\right)^{\frac{k}{6}},
    \end{align*}
    which is not holomorphic.
    This contradicts the assumption that \(f\) is a modular form of weight~\(k\). Hence \(\vartheta f\) is not identically zero.

\end{proof}

\begin{prop}\label{propC}
For \(N=1\), 2 or 3, let \(f\) be a modular form of weight \(k\) for \(\Gamma_0^+(N)\), which is expressed as a polynomial in \(\Efn \) and \(\Esn\) over \(\mathbb C\). Then,
\begin{enumerate}[(a)]
    \item if \(N=1\), then \(f'\) and \(\Etn=E_2\) have infinitely many common $\SL_2(\Z)$-inequivalent zeros if and only if \(f \in \mathbb C \Delta_1^r\) for some \(r \geq 1\), and
    \item if \(N=2\) or 3, then \(f'\) and \(\Etn\) have finitely many common  \(\Gamma_0^+(N)\)-inequivalent zeros.
\end{enumerate}
\end{prop}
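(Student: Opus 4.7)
The plan is to use the Serre derivative $\vartheta f = \theta f - \frac{k}{d}\Etn f$ as the bridge between common zeros of $f'$ and $\Etn$ and the zero set of a genuine modular form. Since $f' = 2\pi i\,\theta f$, the zeros of $f'$ in $\Ha$ coincide with the zeros of $\theta f$. Consequently, if $\tau_0 \in \Ha$ is a common zero of $f'$ and $\Etn$, then $\theta f(\tau_0) = \Etn(\tau_0) = 0$, and therefore
\[
\vartheta f(\tau_0) \;=\; \theta f(\tau_0) - \tfrac{k}{d}\Etn(\tau_0)f(\tau_0) \;=\; 0.
\]
Thus every common zero of $f'$ and $\Etn$ is a zero of the weight $k+2$ modular form $\vartheta f$ for $\Gamma_0^+(N)$.

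For part (b), with $N=2,3$, Lemma~\ref{lem4.2}(b) asserts that $\vartheta f$ is not identically zero. A nonzero holomorphic modular form for $\Gamma_0^+(N)$ has only finitely many zeros in a fundamental domain (by the valence formula, or simply by compactness of the fundamental domain together with the principle that nonzero holomorphic functions have discrete zero sets). Therefore the set of $\Gamma_0^+(N)$-inequivalent common zeros of $f'$ and $\Etn$, being contained in the zero set of $\vartheta f$, is finite.

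For part (a), with $N=1$, I would split according to the dichotomy given by Lemma~\ref{lem4.2}(a). If $\vartheta f \not\equiv 0$, the same valence-formula argument as in (b) forces $f'$ and $E_2$ to have only finitely many inequivalent common zeros. Conversely, if $\vartheta f \equiv 0$, Lemma~\ref{lem4.2}(a) gives $f = c\Delta_1^r$ for some $r\ge 1$ and $c \in \mathbb{C}^\times$, and from $\vartheta f \equiv 0$ we recover the identity $\theta f = \frac{k}{12} E_2 f = r E_2 f$. Because $\Delta_1$ is nonvanishing on $\Ha$, the zero set of $\theta f$ in $\Ha$ is exactly the zero set of $E_2$, so every zero of $E_2$ is automatically a common zero of $f'$ and $E_2$. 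Invoking El Basraoui and Sebbar's theorem \cite{ES10}, which asserts that $E_2$ has infinitely many $\SL_2(\Z)$-inequivalent zeros in $\Ha$, yields the claimed infinitude.

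This proof is almost entirely bookkeeping once Lemma~\ref{lem4.2} is in hand: the only real step is the algebraic identity $\vartheta f(\tau_0) = 0$ at a common zero, and then a standard finiteness result for zeros of a nonzero modular form. The main subtlety I anticipate is simply to verify that the converse direction in (a) needs no additional input beyond $\theta f = rE_2 f$ and the nonvanishing of $\Delta_1$ on $\Ha$, and to state clearly that in case (b) the conclusion depends crucially on Lemma~\ref{lem4.2}(b), which rules out the analogue of the exceptional family $\mathbb{C}\Delta_1^r$ for levels $N=2,3$.
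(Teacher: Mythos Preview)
Your proof is correct and follows essentially the same approach as the paper's own proof: both arguments pass through the Serre derivative $\vartheta f$, use Lemma~\ref{lem4.2} to decide whether $\vartheta f$ vanishes identically, invoke the finiteness of zeros of a nonzero modular form, and for the converse in (a) use $\theta(\Delta_1^r)=rE_2\Delta_1^r$ together with the infinitude of $\mathrm{SL}_2(\mathbb{Z})$-inequivalent zeros of $E_2$.
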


\begin{proof}
    (a) Suppose \(f'\) and \(\Etn\) have infinitely many common $\SL_2(\Z)$-inequivalent zeros. Then \(\vartheta f=\theta f -\dfrac{k}{12}\Etn f\) also has infinitely many $\SL_2(\Z)$-inequivalent zeros. Since any non-zero modular form has only finitely many $\SL_2(\Z)$-inequivalent zeros, \(\vartheta f\) must be 0. By Proposition \ref{lem4.2}, \(f \in \mathbb C \Delta_1^r\). Conversely, if \(f=c\Delta_1^r
    \) for some \(c \in \mathbb C\), then \(\theta f=rcE_2^{(1)}\Delta_1^r\), so every inequivalent zero of \(E_2^{(1)}\) is a zero of \(f'\), and there are infinitely many of them.
    
    (b) Suppose that \(f'\) and \(\Etn\) have infinitely many common \(\Gamma_0^+(N)\)-inequivalent zeros. By the same argument as in the proof of (a), \(\theta f\) must be $0$, which is a contradiction to Lemma~\ref{lem4.2}(b).
\end{proof}

\begin{prop}\label{prop_f'andE2}
For \(N=1, 2\) or $3$, let \(f\) be a modular form of weight \(k\) for \(\Gamma_0^+(N)\), which is expressed as a polynomial in \(\Efn\) and \(\Esn\) over \(\overline{\mathbb Q}\).
\begin{enumerate}[(a)]
    \item If \(N=1\), then \(f'\) and \(\Etn=E_2\) have no common zero if and only if \(f \notin \mathbb C \Delta_1^r\) for any $r \geq 1$.
    \item If \(N=2\) or 3, then \(f'\) and \(\Etn\) have no common zero.
\end{enumerate}
\end{prop}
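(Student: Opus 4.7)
The plan is to reduce the question to a coprimality statement in $\overline{\Q}[x,y,z]$ via the Serre-type derivation $\vartheta$, in the spirit of Proposition~\ref{prop_ratandE2}. Since $f' = 2\pi i\,\theta f$, any common zero $\alpha \in \Ha$ of $f'$ and $\Etn$ satisfies $\theta f(\alpha) = \Etn(\alpha) = 0$, hence
\[
\vartheta f(\alpha) \;=\; \theta f(\alpha) - \tfrac{k}{d}\Etn(\alpha) f(\alpha) \;=\; 0.
\]
Therefore it suffices to prove that the holomorphic modular form $\vartheta f$ of weight $k+2$ and the quasi-modular form $\Etn$ share no zero in $\Ha$, which, by Proposition~\ref{gcd} (whose proof via Theorem~\ref{IL} applies to $N=1$ verbatim), will follow if the numerator of $\rho(\vartheta f)$ in reduced form is coprime to $\rho(\Etn)=x$ in $\overline{\Q}[x,y,z]$.

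Next, I would verify that $\rho(\vartheta f)$ in fact lies in $\overline{\Q}(y,z)$, i.e., involves no $x$ after substitution via the Ramanujan identities \eqref{rama}, \eqref{Ramanujan2}, \eqref{Ramanujan3}. Writing $f=f(y,z)$ weighted-homogeneously of weight $k$ with $y,z$ of weights $4,6$, the Euler relation $4y\partial_y f + 6z\partial_z f = kf$ combined with the $x$-parts of $\theta(y)$ and $\theta(z)$ in each Ramanujan system shows that the coefficient of $x$ in $\rho(\theta f)$ is exactly $\tfrac{k}{d}f$, which precisely cancels the contribution $-\tfrac{k}{d}xf$ in $\rho(\vartheta f)$. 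Writing the result in reduced form $P(y,z)/Q(y,z)$, the numerator $P$ lies in $\overline{\Q}[y,z]$; since $x$ is prime in $\overline{\Q}[x,y,z]$, we have $\gcd(P,x)=1$ unless $P \equiv 0$, that is, unless $\vartheta f \equiv 0$. Combined with the previous paragraph, this yields: $f'$ and $\Etn$ have no common zero whenever $\vartheta f \not\equiv 0$.

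Part~(b) follows at once, since Lemma~\ref{lem4.2}(b) gives $\vartheta f \not\equiv 0$ for $N=2,3$. For the $(\Leftarrow)$ direction of part~(a), Lemma~\ref{lem4.2}(a) identifies $\vartheta f \equiv 0$ with $f \in \C\Delta_1^r$, so $f \notin \C\Delta_1^r$ forces $\vartheta f \not\equiv 0$ and the same reasoning applies. For the $(\Rightarrow)$ direction, I would use the identity $\theta \Delta_1 = E_2 \Delta_1$ (a direct consequence of \eqref{rama}): if $f = c\Delta_1^r$ with $c \neq 0$, then $f' = 2\pi i\,r c\, E_2\,\Delta_1^r$, and since $\Delta_1$ does not vanish on $\Ha$, every zero of $E_2$ (infinitely many inequivalent ones by \cite{ES10}) is a common zero of $f'$ and $E_2$.

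The main subtlety concerns the $x$-cancellation step for $N=2,3$: the identities \eqref{Ramanujan2}, \eqref{Ramanujan3} contain a factor $(\Esn)^2/\Efn$ in $\theta\Esn$, so $\rho(\vartheta f)$ is a genuine element of $\overline{\Q}(y,z)$ rather than $\overline{\Q}[y,z]$. The key observation is that, although $\rho(\vartheta f)$ may be rational, its reduced numerator $P$ remains a polynomial in $y,z$ only, which is exactly what the coprimality-with-$x$ argument requires; the rational denominators in the Ramanujan system do not reintroduce $x$ after the cancellation.
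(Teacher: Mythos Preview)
Your proof is correct and follows the same line as the paper's: a common zero $\alpha$ of $f'$ and $\Etn$ forces $\vartheta f(\alpha)=0$, and since $\vartheta f$ is a nonzero modular form expressible purely in $\Efn,\Esn$ (Lemma~\ref{lem4.2}), this contradicts Nesterenko's theorem. The only cosmetic differences are that you route the contradiction through the coprimality criterion of Proposition~\ref{gcd} while the paper applies Theorem~\ref{IL} directly at $\alpha$, and you verify the $x$-cancellation explicitly via Euler's identity rather than simply citing that $\vartheta$ is the Serre derivative; your added remark that for $N=2,3$ one only needs $\rho(\vartheta f)\in\overline{\Q}(y,z)$ (not $\overline{\Q}[y,z]$) is in fact a small clarification of the paper's claim.
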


\begin{proof}
    (a) Suppose \(f \in \mathbb C \Delta_1^r\). Then by Proposition \ref{propC}.(a), \(f'\) and \(E_2^{(1)}\) have infinitely many common zeros. Suppose \(f \notin \mathbb C \Delta_1^r\). Then \(\vartheta f\) is a non-zero modular form, by Lemma~\ref{lem4.2}(a). Since \(f \in \overline{\mathbb Q}\left[E_4^{(1)},E_6^{(1)}\right]\), \(\vartheta f\) must be in \(\overline{\mathbb Q}\left[E_4^{(1)},E_6^{(1)}\right]\). Assume that \(f'\) and \(E_2^{(1)}\) have a common zero, say \(\alpha\). Then \(E_2^{(1)}(\alpha)=0\), so \(E_4^{(1)}(\alpha)\) and \(E_6^{(1)}(\alpha)\) must be algebraically independent by Theorem~\ref{IL}. However, \(\vartheta f(\alpha)=0\) implies that \(E_4^{(1)}(\alpha)\) and \(E_6^{(1)}(\alpha)\) are algebraically dependent. This is a contradiction, hence \(f'\) and \(E_2^{(1)}\) have no common zero.
    
    (b) Suppose \(f'\) and \(\Etn\) have a common zero, say \(\beta\). Then \(\beta\) is a zero of non-zero modular form \(\vartheta f\in \overline{\mathbb Q}\left[\Efn,\Esn\right]\). This yields a contradiction by the same method as in the proof of~(a).
\end{proof}

We mention that in \cite[Theorem 3.5.(iii)]{MEH} the condition $f \notin \C \Delta$ is supposed to be $f \notin \C \Delta^r$ for any $r\geq 1$ as in Proposition~\ref{prop_f'andE2}~(a).

\subsection{Common zeros for $N=1$}

In this subsection, we provide further examples of quasi-modular forms for $\SL_2(\Z)$ with algebraic Fourier coefficients which have no common zeros.

\

Recall that for a non-cocompact Fuchsian group $\Gamma$, we denote the space of quasi-modular forms of weight $k$ and depth $\leq \ell$ for $\Gamma$ by $\widetilde{M}_k^{(\leq \ell)}(\Gamma)$. Also we denote the space of modular forms of weight $k$ for $\Gamma$ by $M_k(\Gamma).$ By abuse of notation, we write $M_k:=M_k(\Gamma)$ and $\widetilde{M}_k^{(\leq \ell)}:=\widetilde{M}_k^{(\leq \ell)}(\Gamma)$ if $\Gamma=\SL_2(\Z)$.

Let $M_{\overline{\Q}}:=\left( \bigoplus_{k \geq 0} M_k \right) \cap \rho^{-1}(\overline{\Q}[y,z])$ be the ring of modular forms for $\SL_2(\Z)$ with algebraic Fourier coefficients, and let $\widetilde{M}_{\overline{\Q}}:= \left( \bigcup_{\ell \geq 0} \bigoplus_{k \geq 0} \widetilde{M}_k^{(\leq \ell)} \right) \cap \rho^{-1}(\overline{\Q}[x,y,z])$ be the ring of quasi-modular forms for $\SL_2(\Z)$ with algebraic Fourier coefficients.

\begin{prop}\label{prop_firstelimination}
Let $f$ and $g$ be quasi-modular forms for $\SL_2(\Z)$, both of algebraic Fourier coefficients. Write $g$ as
\begin{align*}
    g=g_0+g_1 E_2+\cdots +g_{\ell}E_2^{\ell}
\end{align*}
for $g_i \in M_{\overline{\Q}}.$ If there exists a modular form $h \in f\widetilde{M}_{\overline{\Q}} + g\widetilde{M}_{\overline{\Q}}$ such that $h$ is relatively prime to $g_i$ in the ring of modular forms $M_{\overline{\Q}}$ for some $i \in \{0,1,\ldots,\ell\},$ then $f$ and $g$ have no common zero.
\end{prop}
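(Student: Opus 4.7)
The plan is to argue by contradiction. Suppose $\alpha \in \Ha$ is a common zero of $f$ and $g$. Fix $A, B \in \widetilde{M}_{\overline{\Q}}$ with $h = fA + gB$; then $h(\alpha) = 0$, so $\rho(h), \rho(g) \in P_\alpha$, where
\[
P_\alpha := \{F \in \overline{\Q}[x,y,z] : F(E_2(\alpha), E_4(\alpha), E_6(\alpha)) = 0\}.
\]

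The first step is to observe that Lemma~\ref{principal} also holds for $N=1$: its proof invokes Theorem~\ref{IL} (which covers $N=1$) together with the Noetherian UFD structure of $\overline{\Q}[x,y,z]$, and both ingredients apply unchanged. Hence $P_\alpha$ is a principal ideal $(p)$ of $\overline{\Q}[x,y,z]$. Since $h$ is coprime to the (non-zero) $g_i$, $h$ is itself non-zero, so $0 \neq \rho(h) \in P_\alpha$; combined with $1 \notin P_\alpha$, the generator $p$ is a non-zero non-unit of $\overline{\Q}[x,y,z]$.

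The second step extracts the crucial structural information about $p$. Being a modular form, $h$ satisfies $\rho(h) \in \overline{\Q}[y,z]$, so $\deg_x \rho(h) = 0$. The divisibility $p \mid \rho(h)$ in $\overline{\Q}[x,y,z]$ then forces $\deg_x(p) = 0$, i.e., $p \in \overline{\Q}[y,z]$. Writing $\rho(g) = \sum_{i=0}^{\ell} \rho(g_i)\, x^i$ with each $\rho(g_i) \in \overline{\Q}[y,z]$, the relation $p \mid \rho(g)$ together with $p \in \overline{\Q}[y,z]$ gives $p \mid \rho(g_i)$ for every index $i$.

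Finally, I invoke the ring isomorphism $\rho \colon M_{\overline{\Q}} = \overline{\Q}[E_4, E_6] \xrightarrow{\sim} \overline{\Q}[y,z]$ (sending $E_4, E_6$ to $y, z$). This sends coprime pairs to coprime pairs, so for the particular index $i$ supplied by the hypothesis, $\rho(h)$ and $\rho(g_i)$ must be coprime in $\overline{\Q}[y,z]$. This contradicts the existence of the non-unit common divisor $p$ produced above, closing the argument. I expect the only delicate moment to be confirming that Lemma~\ref{principal} carries over to $N=1$ and that the principal generator $p$ of $P_\alpha$ can be taken in $\overline{\Q}[y,z]$ by exploiting the fact that $\rho(h)$ has $x$-degree zero; everything else reduces to standard UFD manipulations and the already-established dictionary between modular forms and polynomials in $y,z$.
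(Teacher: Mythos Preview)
Your proof is correct and follows essentially the same route as the paper's: both arguments produce a non-unit common divisor in $\overline{\Q}[y,z]$ of $\rho(h)$ and every $\rho(g_j)$, contradicting the coprimality hypothesis. The only cosmetic difference is that the paper phrases this via $d=\gcd(\rho(f),\rho(g))$ and the contrapositive of Proposition~\ref{gcd}, whereas you work directly with the principal generator $p$ of $P_\alpha$; since $p\mid d$, these come to the same thing.
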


\begin{proof}
Let $\gcd(\rho(f),\rho(g))=d.$ Since $h \in f\widetilde{M}_{\overline{\Q}} + g\widetilde{M}_{\overline{\Q}}$ and is a modualr form, $\rho(h)$ belongs to the first elimination ideal $(\rho(f),\rho(g))_1:=(\rho(f),\rho(g)) \cap \overline{\Q}[y,z]$ of $(\rho(f),\rho(g))$. Thus $\rho(h)$ is divisible by $d$, but here there exists $\rho(g_i)$ for some $i \in \{0,1,\ldots,\ell\}$ which is relatively prime to $\rho(h)$, so prime to $d$. Since $\rho(h) \in \overline{\Q}[y,z]$ implies that $d \in \overline{\Q}[y,z]$ and since $\rho(h) \mid \rho(g)$, we have $d \mid \rho(g_j)$ for any $j \in \{0,1,\ldots,\ell\}.$ In particular $d$ divides both of $\rho(h)$ and $\rho(g_i)$, therefore $d$ must be a constant. 
\end{proof}

\begin{cor}
Let $f$ and $g$ be quasi-modular forms for $\SL_2(\Z)$, both of algebraic Fourier coefficients. Suppose that $g$ has the $f$-expansion, i.e.
\begin{align*}
    g=g_0 + g_1 f + \cdots + g_{\ell} f^{\ell}
\end{align*}
for some $g_0, g_1, \ldots, g_{\ell} \in M_{\overline{\Q}}.$ If $g_0$ is relatively prime to $g_i$ in $M_{\overline{\Q}}$ for some $i \in \{1,2,\ldots,\ell\},$ then $f$ and $g$ have no common zero.
\end{cor}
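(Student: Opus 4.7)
My plan is to deduce the Corollary from Proposition~\ref{prop_firstelimination} by taking $h := g_0$ as the distinguished modular form required in its hypothesis. The key preliminary observation is that rearranging the $f$-expansion of $g$ as
\[
g_0 = g - f\,\bigl(g_1 + g_2 f + \cdots + g_\ell f^{\ell-1}\bigr)
\]
places $g_0$ inside the ideal $f\,\widetilde{M}_{\overline{\Q}} + g\,\widetilde{M}_{\overline{\Q}}$. Since $g_0 \in M_{\overline{\Q}}$ by hypothesis, the element $h = g_0$ is a modular form with algebraic Fourier coefficients lying in the prescribed ideal, so it satisfies the structural requirement of Proposition~\ref{prop_firstelimination}.

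Next, the assumption that $g_0$ is relatively prime to $g_i$ in $M_{\overline{\Q}}$ for some $i \in \{1,2,\ldots,\ell\}$ supplies the coprimality condition of Proposition~\ref{prop_firstelimination} with $h = g_0$ playing the role of the distinguished modular form. Applying that proposition then yields immediately that $f$ and $g$ share no common zero in $\mathbb{H}$, which is the desired conclusion.

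The essential move is the algebraic rearrangement that exhibits $g_0$ as a member of $f\,\widetilde{M}_{\overline{\Q}} + g\,\widetilde{M}_{\overline{\Q}}$; every other step is a direct appeal to the previous proposition. The one subtlety worth flagging is that the divisibility chain underlying the proof of Proposition~\ref{prop_firstelimination} is carried out inside the polynomial ring $\overline{\Q}[y,z] \subset \overline{\Q}[x,y,z]$, so I would confirm that coprimality of $g_0$ and $g_i$ in $M_{\overline{\Q}}$ transports correctly under the isomorphism $\rho$ of \eqref{rho} onto coprimality of the corresponding polynomials in $\overline{\Q}[y,z]$. This is immediate because $\rho$ identifies $M_{\overline{\Q}}$ with $\overline{\Q}[y,z]$, and so no further work is required.
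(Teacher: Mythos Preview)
Your argument is exactly the one the paper gives: set $h=g_0$, observe that $g_0=g-(g_1+g_2f+\cdots+g_\ell f^{\ell-1})f$ lies in $f\widetilde{M}_{\overline{\Q}}+g\widetilde{M}_{\overline{\Q}}$, and invoke Proposition~\ref{prop_firstelimination}. However, there is a genuine gap in the step where you say ``the assumption that $g_0$ is relatively prime to $g_i$ \ldots\ supplies the coprimality condition of Proposition~\ref{prop_firstelimination}.'' In Proposition~\ref{prop_firstelimination} the symbols $g_0,\ldots,g_\ell$ denote the coefficients of the \emph{$E_2$-expansion} of $g$, while in the Corollary the same letters denote the coefficients of the \emph{$f$-expansion}. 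These coincide only when $f=E_2$; in general the hypothesis of the Corollary says nothing about the $E_2$-expansion coefficients that the Proposition actually requires $h$ to be coprime to.

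This is not merely a notational slip. Take $f=E_2E_4$ (quasi-modular of weight $6$) and $g=E_4^3+E_6\cdot f$, so that in the $f$-expansion $g_0=E_4^3$ and $g_1=E_6$ are coprime in $M_{\overline{\Q}}\cong\overline{\Q}[E_4,E_6]$. Yet both $f$ and $g$ vanish at the elliptic point $\alpha=e^{2\pi i/3}$, since $E_4(\alpha)=0$. The $E_2$-expansion of $g$ is $g=E_4^3+(E_4E_6)E_2$, and $h=g_0=E_4^3$ shares the factor $E_4$ with \emph{both} of these coefficients, so the hypothesis of Proposition~\ref{prop_firstelimination} fails. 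Thus your verification that $g_0\in f\widetilde{M}_{\overline{\Q}}+g\widetilde{M}_{\overline{\Q}}$ is correct, but the passage from the Corollary's coprimality hypothesis to the Proposition's is not justified---and indeed cannot be, as the example shows. (The paper's one-line proof glosses over the same point.)
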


\begin{proof}
It follows from Proposition \ref{prop_firstelimination} with $h=g_0=g-(g_1+g_2 f+\cdots + g_{\ell}f^{\ell-1})f$.
\end{proof}

If we let $f=E_2$, the above corollary gives Proposition \ref{prop_ratandE2} for $N=1$ (or, \cite[Proposition 3.2]{MEH}).

\begin{cor}
Let $f$ be a modular form of arbitrary weight for $\SL_2(\Z)$, and $g$ be a quasi-modular form for $\SL_2(\Z),$ both of algebraic Fourier coefficients. If there is a quasi-modular form $F \in \widetilde{M}_{\overline{\Q}}$ such that $g$ has the $F$-expansion $g=g_0 + g_1 F + \cdots g_{\ell} F^{\ell}$ for which $g_i$'s are coprime in $M_{\overline{\Q}},$ then $f$ and $g$ have no common zero.
\end{cor}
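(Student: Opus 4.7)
The plan is to argue by contradiction, applying the principal ideal technique from the proof of Lemma~\ref{principal} in the $N=1$ setting (cf.\ Proposition~2 of \cite{SJ20}). Suppose $\alpha\in\mathbb H$ is a common zero of $f$ and $g$. Since Theorem~\ref{IL} is valid at $N=1$, the same argument as in Lemma~\ref{principal} shows that the evaluation ideal
\begin{equation*}
P_\alpha:=\{P\in\overline{\mathbb Q}[x,y,z]:P(E_2(\alpha),E_4(\alpha),E_6(\alpha))=0\}
\end{equation*}
is a nonzero principal ideal of $\overline{\mathbb Q}[x,y,z]$, generated by some irreducible element $p$. Because $\rho(f)$ is a nonzero element of $\overline{\mathbb Q}[y,z]\cap P_\alpha$, the irreducible $p$ divides $\rho(f)$ in the UFD $\overline{\mathbb Q}[x,y,z]$, and this forces $p\in\overline{\mathbb Q}[y,z]$.

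I would then work modulo $p$. From $\rho(g)=\sum_{i=0}^{\ell}\rho(g_i)\,\rho(F)^i\in P_\alpha$ and the integral domain $R:=\overline{\mathbb Q}[y,z]/(p)$, we obtain
\begin{equation*}
\sum_{i=0}^{\ell}\bar g_i\,\bar F^i=0\quad\text{in }R[x]\cong\overline{\mathbb Q}[x,y,z]/(p),
\end{equation*}
where $\bar g_i$ and $\bar F$ denote the reductions of $\rho(g_i)$ and $\rho(F)$ modulo $p$. Writing $\rho(F)=F_0+F_1x+\cdots+F_mx^m$ with $F_j\in\overline{\mathbb Q}[y,z]$ and $F_m\neq 0$, in the main case the image $\bar F$ has positive $x$-degree in $R[x]$; comparing $x$-leading coefficients successively in the displayed equation then forces $\bar g_\ell=\bar g_{\ell-1}=\cdots=\bar g_0=0$, i.e., $p\mid\rho(g_i)$ in $\overline{\mathbb Q}[y,z]$ for every $i$. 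This contradicts the hypothesis of coprimeness, which algebraically reads $\gcd(\rho(g_0),\ldots,\rho(g_\ell))=1$.

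The hard part will be the degenerate case in which $p$ divides every $F_k$ for $k\ge 1$; this confines $\alpha$ to the common zero locus in $\mathbb H$ of the modular forms $F_1,\ldots,F_m$, a set that is finite modulo $\SL_2(\mathbb Z)$ since $F_m$ itself is a nonzero modular form. A clean resolution is to note that the corollary is most naturally applied under the additional assumption $\gcd(F_1,\ldots,F_m)=1$ in $M_{\overline{\mathbb Q}}$ (satisfied, for instance, when $F=E_2$, giving $F_1=1$), which immediately rules out this degenerate case; otherwise one must run a separate analysis at each of the finitely many exceptional points $\alpha$, exploiting the coprimeness of the $g_i$'s to preclude simultaneous vanishing $f(\alpha)=g(\alpha)=0$.
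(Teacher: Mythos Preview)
Your main argument (the non-degenerate case) is correct and is essentially the paper's Proposition~\ref{prop_firstelimination} unfolded and adapted from the $E_2$-expansion to an $F$-expansion. The paper's one-line proof of the corollary (``apply Proposition~\ref{prop_firstelimination} with $h=f$, since one of the $g_i$ must be coprime to $h$'') is implicitly doing the same thing: the proposition as stated concerns the $E_2$-expansion of $g$, not the $F$-expansion, so to use the corollary's hypothesis one must re-run its proof with $\rho(F)$ in place of $x$---and that is exactly where your degenerate case arises.

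That degenerate case is a genuine obstruction, and your proposed resolution (b) cannot succeed, because the corollary as stated is \emph{false}. Take $F=E_4E_2+E_6$, $g_0=-E_6$, $g_1=1$, so that $g=g_0+g_1F=E_4E_2$; here $g_0,g_1$ are coprime in $M_{\overline{\mathbb Q}}$ since $g_1$ is a unit. With $f=E_4$, the point $\alpha=e^{2\pi i/3}$ is a common zero of $f$ and $g$. In your notation $p=y$, and $p$ divides $F_1=y$ (the only positive $x$-degree coefficient of $\rho(F)=yx+z$), so $\bar F=\bar z\in R$ and the relation $\bar g_0+\bar g_1\bar F=-\bar z+\bar z=0$ holds in $R$ without any $\bar g_i$ vanishing. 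Thus the paper's own proof is incomplete as well. Your resolution (a)---imposing $\gcd(F_1,\dots,F_m)=1$ in $M_{\overline{\mathbb Q}}$---is the correct fix and suffices for the paper's two subsequent Examples, both of which take $F=E_2$ (so $F_1=1$).
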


\begin{proof}
It follows from Proposition \ref{prop_firstelimination} with $h=f$, then since one of $g_i$ must be relatively prime to $h$.
\end{proof}

\begin{example}
There is no common zero of any holomorphic modular form (equivalently, a quasi-modular form of the minimal depth $r=0$) and a quasi-modular form of the maximal depth (that is, a quasi-modular form of weight $k$ and depth $r=\frac{k}{2}$) for $\SL_2(\Z)$.
\end{example}

\begin{example}
Let $k=2,4,6$. For each integer $n\geq 1$ there is no common zero of $\frac{d^n E_k(\tau)}{d\tau^n}$ and any non-zero holomorphic modular form of arbitrary weight for $\SL_2(\Z)$. 
\end{example}

\begin{rmk}
Let $I \subseteq \C[x_1,x_2,x_3]$ be the vanishing ideal of a singleton $\{(1,1,1)\} \subset \Ac^3$, and let $J$ be a weighted homogeneous ideal contained in $I.$ Then the first elimination ideal $J_1:=J \cap \C[x_2,x_3]$ of $J$ is contained in $(x_2^3-x_3^2)\C[x_2,x_3]$. Indeed, if we pick an arbitrary generator $F$ of the ideal $J$, then $f:=\rho^{-1}(F)$ is a quasi-modular form of weight $2\wtdeg(F)$, and is cuspidal. Since $J$ is weighted homogeneous, so is $J_1$, which means that any weighted homogeneous element of $J_1$ is also cuspidal, i.e., it is corresponding to a holomorphic cusp form. Note that any cusp form is written as $\Delta_1 \cdot g$ for some modular form $g$ and the weight $12$ cusp form $\Delta_1:=E_4^3-E_6^2.$ Hence any generator of an ideal $J_1$ is divided by the polynomial $x_2^3-x_3^2.$ 
\end{rmk}

\section{Simplicity of zeros of quasi-modular forms: Proof of Theorem~\ref{thm: simple zero quasimodular}}  \label{sec: simplicity}

In this section, we show the simplicity of zeros, Theorem~\ref{thm: simple zero quasimodular}. \cref{gcd} provides a useful method to prove the non-existence of common zeros of certain quasi-modular forms.  For example, one can easily deduce the following proposition from \cref{gcd}.

\begin{prop}\label{prop: simple zero 1}
For \(N=2,3\), all the zeros of each of  \[\Etn,\theta \Etn,\theta^2\Etn,\theta^3 \Etn\] are simple. Moreover, there are no common zeros of any two of \[\Etn,\theta \Etn, \theta^2 \Etn, \theta^3 \Etn, \theta^4 \Etn.\]
\end{prop}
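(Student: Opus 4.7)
The proof splits into a reduction step and a computational step. First, recall that a zero $\alpha \in \Ha$ of a holomorphic function $f$ is simple if and only if $\theta f(\alpha) \neq 0$. Consequently, the first assertion of the proposition is subsumed by the ``moreover'' part: if no two of $\Etn, \theta\Etn, \theta^2\Etn, \theta^3\Etn, \theta^4\Etn$ share a common zero, then in particular $\theta^j \Etn$ and $\theta^{j+1}\Etn$ have no common zero for $j = 0, 1, 2, 3$, so every zero of $\theta^j \Etn$ is simple. It therefore suffices to treat the ten pairs of forms from the moreover statement.

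By \cref{gcd}, each such pair is settled by showing that the numerators of their images under $\rho$, in reduced form, are coprime in $\overline{\Q}[x,y,z]$. The plan is to compute these images by iterating the derivation induced on $\overline{\Q}(x,y,z)$ by the Ramanujan identities \eqref{Ramanujan2} and \eqref{Ramanujan3}. For $N=2$ this yields
\begin{align*}
\rho(\Etn) &= x, \quad \rho(\theta \Etn) = \tfrac{1}{8}(x^2 - y), \quad \rho(\theta^2 \Etn) = \tfrac{1}{32}(x^3 - 3xy + 2z), \\
\rho(\theta^3 \Etn) &= \tfrac{1}{256\, y}\bigl(3x^4 y - 18 x^2 y^2 - 5y^3 + 24 xyz - 4z^2\bigr),
\end{align*}
together with a similar but bulkier formula for $\rho(\theta^4 \Etn)$; the case $N=3$ is analogous with different constants arising from \eqref{Ramanujan3}.

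The four pairs involving $\Etn$ are immediately dispatched by \cref{prop_ratandE2}. For the remaining six pairs, I would exploit that the numerators of $\rho(\theta \Etn)$ and $\rho(\theta^2 \Etn)$ are linear in $y$ and in $z$ respectively, hence irreducible; coprimality of each of them with the numerators of $\rho(\theta^j \Etn)$ for larger $j$ then reduces to verifying that the substitutions $y \mapsto x^2$ and $z \mapsto (3xy - x^3)/2$ do not annihilate the respective numerators, which is a quick direct check. For the final pair $\{\theta^3 \Etn, \theta^4 \Etn\}$ I would first verify that the numerator of $\rho(\theta^3 \Etn)$ is irreducible in $\overline{\Q}[x,y,z]$ (its discriminant as a quadratic in $z$ is $16y(3x^4 + 18 x^2 y - 5y^2)$, which contains $y$ to an odd power and so fails to be a square in $\overline{\Q}(x,y)$), and then rule out divisibility into $\rho(\theta^4 \Etn)$'s numerator by a weighted-degree comparison combined with a direct computation. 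The main obstacle is the clerical one flagged in the introduction: because the Ramanujan identity for $\theta \Esn$ contains the non-polynomial term $(\Esn)^2/\Efn$, the denominators of $\rho(\theta^j \Etn)$ pick up powers of $y$ (respectively of $y$ again for $N=3$), and one must carefully cancel common factors between numerator and denominator before invoking \cref{gcd}. Once the reduced forms are in hand, all six coprimality tests are routine, and the argument for $N=3$ proceeds along identical lines using \eqref{Ramanujan3}.
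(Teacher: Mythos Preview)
Your proposal is correct and follows essentially the same route as the paper: both compute $\rho(\theta^j\Etn)$ explicitly from the Ramanujan identities \eqref{Ramanujan2}, \eqref{Ramanujan3} and then invoke \cref{gcd} once the numerators of the reduced forms are seen to be pairwise coprime in $\overline{\Q}[x,y,z]$. The paper simply lists the five numerators and asserts their pairwise coprimality; your added details (the appeal to \cref{prop_ratandE2} for the pairs containing $\Etn$, the irreducibility of $x^2-y$ and $x^3-3xy+2z$ via linearity in $y$ and $z$, the discriminant argument for the degree-$6$ numerator, and the weighted-degree check for the last pair) are a reasonable way to make that assertion explicit, but they do not constitute a different approach.
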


\begin{proof}
By the Ramanujan identity \eqref{Ramanujan2} for $\Gamma_0^+(2)$, we have
\begin{align*}
        \theta \Et&=\dfrac{1}{8}\left((\Et)^2-\Ef\right), \\
        \theta^2 \Et&=\dfrac{1}{32}\left((\Et)^3-3\Et \Ef+2\Es\right), \\
        \theta^3 \Et&=\dfrac{1}{256}\left(3(\Et)^4-18(\Et)^2 \Ef+24\Et\Es -5(\Ef)^2-4\dfrac{(\Es)^2}{\Ef} \right), \\
        \theta^4 \Et&=\dfrac{1}{512}\left(3(\Et)^5-30(\Et)^3\Ef+60(\Et)^2\Es-25\Et(\Ef)^2\phantom{\dfrac{\Et(\Es)^2}{\Ef}}\right. \\ &\left.\quad +12\Ef\Es-20\dfrac{\Et(\Es)^2}{\Ef} \right),
\end{align*}
so the numerators of reduced forms of \(\rho(\theta^i \Et)\) for \(i=0,1,2,3,4\) are
\begin{align*}
    &x, \\
    &x^2-y, \\
    &x^3-3xy+2z, \\
    &3x^4y-18x^2y^2+24xyz-5y^3-4z^2, \\
    &3x^5y-30x^3y^2+60x^2yz-25xy^2+12y^2z-20xz^2,
\end{align*}
respectively. The proof for the case when $N=2$ follows from the fact that any two numerators listed above have no non-unit common divisors.
Similarly, for \(N=3\), by the Ramanujan identity \eqref{Ramanujan3} for $\Gamma_0^+(3)$, we have
\begin{align*}
    \theta \Ett&=\dfrac{1}{6}\left((\Ett)^2-\Eff \right), \\
    \theta^2 \Ett&=\dfrac{1}{18}\left((\Ett)^3-3\Ett \Eff+2\Ess \right), \\
    \theta^3 \Ett&=\dfrac{1}{36}\left((\Ett)^4-6(\Ett)^2\Eff-(\Eff)^2+8\Ett \Ess-2\dfrac{(\Ess)^2}{\Eff} \right), \\
    \theta^4 \Ett&=\dfrac{1}{54}\left((\Ett)^5-10(\Ett)^3\Eff-5\Ett(\Eff)^2+20(\Ett)^2\Ess\right.\\
    &\left.\quad +3\Eff \Ess-10\dfrac{\Ett(\Ess)^2}{\Eff}+\dfrac{(\Ess)^3}{(\Eff)^2} \right),
\end{align*}
so the same argument completes the proof for \(N=3\).
\end{proof}

This method can be adopted to show the the simplicity of zeros of all derivatives of quasi-modular forms $E_2^{(N)}$, $E_4^{(N)}$, $E_6^{(N)}$ by analyzing the rational functions $\rho\left(E_2^{(N)}\right)$, $\rho\left(E_4^{(N)}\right)$, $\rho\left(E_4^{(N)}\right)$ and the derivations $D^{(N)}$ in $\QQ(x, y, z)$ corresponding to $\theta$ for $N=1,2,3$.

In \cite[Theorem 7]{SJ20}, the authors considered $N=1$ cases and proved that all the zeros of $\frac{d^r E_k^{(1)}(\tau)}{d\tau^r}$ are simple for all integers $r\ge1$ and even integers $k\ge2$. Their method, which relies on the properties of certain polynomial rings, has some obstruction to be generalized to the cases when $N\geq 2$, since the Ramanujan identities \eqref{Ramanujan2} and \eqref{Ramanujan3} for $N=2,3$  are not of polynomial forms in terms of $E_k^{(N)}$'s anymore. So we carry out some careful analysis on this issue to generalize the simplicity result to the cases when $N=2, 3$. For the readers' convenience, we convey its complete proof including the case $N=1$.

With the natural isomporphism $\rho\colon  \mathbb{Q}\left(E_2^{(N)},E_4^{(N)},E_4^{(N)}\right)\to \mathbb Q(x,y,z)$ such that $\rho\left(E_2^{(N)}\right)=x$, $\rho\left(E_4^{(N)}\right)=y$, $\rho\left(E_6^{(N)}\right)=z$  given in \eqref{rho}, the derivation $\theta$ on the space of quasi-modular forms can be represented as the derivation $D^{(N)}$ on $\mathbb{Q}(x,y,z)$ as follows:
\begin{align} \label{eqn: DN defn}
    D^{(N)} f &:=
    \begin{cases}
    \frac{x^2-y}{12} \frac{\partial}{\partial x} f + \frac{xy-z}{3} \frac{\partial}{\partial y}f + \frac{zx - y^2}{2} \frac{\partial}{\partial z} f , & \text{if }N=1, \\ 
    \frac{x^2- y}{8} f_x + \frac{xy-z}{2} f_y + \frac{3xz - 2y^2 - z^2/y}{4} f_z , & \text{if }N=2, \\ 
    \frac{x^2- y}{6} f_x + \frac{2(xy-z)}{3} f_y + \frac{2xz - y^2 - z^2/y}{2} f_z ,  & \text{if }N=3, 
    \end{cases} \\ 
    & = p^{(N)} f_x + q^{(N)} f_y + r^{(N)} f_z, \notag
\end{align}
where 
\begin{align*}
    &p^{(1)}:=\frac{x^2-y}{12},&& q^{(1)} := \frac{xy-z}{3},&& r^{(1)} := \frac{zx - y^2}{2},\\
    &p^{(2)}:=\frac{x^2- y}{8},&&  q^{(2)} := \frac{xy-z}{2},&&  r^{(2)} := \frac{3xz - 2y^2 - z^2/y}{4}, \\
    &p^{(3)}:= \frac{x^2- y}{6},&& q^{(3)} := \frac{2(xy-z)}{3},&&  r^{(3)} := \frac{2xz - y^2 - z^2/y}{2},
\end{align*}
referring to~\eqref{rama}, \eqref{Ramanujan2}, and \eqref{Ramanujan3}. Recalling $m$ and $d$ in \eqref{eqn: m and d} depending on $N$, we note that $d=\frac{4m}{m-2}$ for each $N=1,2,3$. Then the above equations can be summarized as 
\begin{align*} D^{(N)}f =&  \frac{1}{d}\left( (x^2-y) f_x + 4(xy-z) f_y +\left( 6xz-\frac{2m}{m-2}y^2 -\frac{4(m-3)}{m-2}z^2/y\right)  f_z\right) .
\end{align*}

Note that for $N=2,3$, $yD^{(N)}$ are operators in $\QQ[x,y,z]$, but $D^{(N)}$ are not. Before  further discussion on common zeros, we need to carry out some careful analyses on the degrees of the denominators of $(D^{(2)})^n$ and $(D^{(3)})^n$ at $x$, $y$, and $z$, for $n\ge1$.

\subsection{
\texorpdfstring{Degree analysis on $\left(D^{(N)}\right)^n$}{Degree analysis on (D⁽ᴺ⁾)ⁿ}
} \label{section: degree analysis}
This section is dedicated to some necessary analyses of the degrees of $\left(D^{(N)}\right)^n x$, $\left(D^{(N)}\right)^n y$, and $\left(D^{(N)}\right)^n z$ for $N=2, 3$, and especially the degrees of their denominators. 

We consider the case when $N=2$ in detail here. The results for the case when $N=3$ will be stated in this section, and its proof will be given in Appendix~\ref{sec: appendix}.

Let $\mathcal{D}^{(2)} = dD^{(2)} = 8 D^{(2)}$ so that they have the integral coefficients, i.e.,
\[ \mathcal{D}^{(2)} = (x^2 - y) \frac{\partial}{\partial x} + (4xy - 4z) \frac{\partial}{\partial y} + (6xz - 4y^2 - 2 y^{-1}z^2 ) \frac{\partial}{\partial z}.
\]

Let $\deg_x, \deg_y, \deg_z$ be the degrees of monomials in $\QQ[x,y, z, 1/y]$ with respect to $x$, $y$ and $z$, respectively. (Note that $\deg_y$ can be negative.) 
Also, let \[\wtdeg = \deg_x + 2 \deg_y + 3\deg_z\] be the weighted degree. Note that all monomials in $\left( D^{(N)}\right)^n x$, $\left( D^{(N)}\right)^n y$, $\left( D^{(N)}\right)^n z$ have the same $\wtdeg$, so $\wtdeg$ can be extended to them, for $n\ge0$. Also we note that $\wtdeg \left(\mathcal{D}^{(2)} f\right) = \wtdeg(f) +1$ for a weighted homogeneous $f\in \QQ[x, y, z, 1/y]$.

Since 
\begin{alignat*}{2} \mathcal{D}^{(2)} \left( x^a y^b z^c\right)  = (a+4b + 6c) x^{a+1}y^b z^c - (4b +2c) x^a y^{b-1} z^{c+1}   -4c x^a y^{b+2} z^{c-1} -a x^{a-1}y^{b+1} z^c, \end{alignat*}
when we represent $\textsf{(some coefficient)}\cdot x^a y^b z^c$ as a point $(a, b,c)^\tp \in \mathbb{Z}^3$ (here, ${}^\tp$ stands for the transpose) $\mathcal{D}^{(2)}$ transfers $(a, b,c)^\tp$ to (at most) four points as 
\[ \begin{pmatrix} a\\ b\\c\end{pmatrix} \stackrel{ \mathcal{D}^{(2)}}{\longmapsto} \begin{pmatrix}a\\ b\\ c\end{pmatrix} + \begin{pmatrix} 0 \\ 1/2 \\ 0\end{pmatrix} + \left\{ 
\pm \begin{pmatrix} \phantom{-}0 \\ - 3/2 \\ \phantom{-}1 \end{pmatrix}, \pm \begin{pmatrix} \phantom{-}1\\ -1/2 \\ \phantom{-}0 \end{pmatrix}\right\}.\]
We introduce the matrix,
\begin{equation}\label{eq:T}
T = \frac{1}{14}\begin{pmatrix} 13 & -2 & -3 \\ 3 & 6 & -5 \\  1 & 2 & 3\end{pmatrix} \text{ with its inverse } T^{-1} = \begin{pmatrix}1 & 0 & 1 \\-\frac{1}{2} & \frac{3}{2} & 2 \\0 & -1 & 3 \end{pmatrix}, 
\end{equation}
which is determined to have the following relation, and transforms the above four candidate points into the unit vectors on a plane:
\[ T\begin{pmatrix} a\\ b\\c\end{pmatrix} \stackrel{T \mathcal{D}^{(2)}T^{-1}}{\longmapsto} T\begin{pmatrix}a\\ b\\ c\end{pmatrix} + T\begin{pmatrix} 0 \\ 1/2 \\ 0\end{pmatrix} + \left\{
\pm \begin{pmatrix} 0 \\ 1 \\ 0 \end{pmatrix}, \pm \begin{pmatrix}1 \\ 0 \\ 0 \end{pmatrix}\right\},\]
and by this way we can represent the monomial $x^a y^b z^c$ in $\left(\mathcal{D}^{(2)}\right)^r(x)$ as $(\lambda, \nu) \in \mathbb{Z}^2$  with relation 
\begin{equation} \label{eqn: monomial to plane}
(a, b, c)^\tp = (1,r/2,0)^\tp + T^{-1} (\lambda, \nu, 0)^\tp, 
\end{equation}
i.e.,  $(a, b, c) = \left(\lambda+1, (-\lambda +3\nu +r)/2, -\nu \right)$. 
After applying $\left(\mathcal{D}^{(2)}\right)^r$, the monomials of $\left(\mathcal{D}^{(2)}\right)^r x$ are represented as elements in 
\[\left\{(1,r/2,0)^\tp +T^{-1} (\lambda, \nu, 0)^\tp : |\lambda |+|\nu| \le r, \quad \lambda+ \nu \equiv r \pmod 2\right\}.\]

This set is not necessarily the whole set of all monomials in $\left( \mathcal{D}^{(2)}\right)^r x$; for example, there are some obvious restrictions like the non-negativeness of $\deg_x$ and $\deg_z$, which means $\lambda \ge-1$  and $\nu \le 0$. We can prove  another condition and find the formula for the minimal $\deg_y$ of  monomials of $\left( \mathcal{D}^{(2)}\right)^r x$.

Let $x^a y^b z^c$ be a monomial appearing in $\left( \mathcal{D}^{(2)}\right)^r x$. If $\mathcal{D}^{(2)}(x^a y^b z^c)$ contains a monomial with $\deg_y$ less than $b$, then the monomial with decreased $\deg_y$ is  $-(4b +2c) x^a y^{b-1} z^{c+1}$. This term is non-zero only when $c \ne -2b$. When $c=-2b$, since $a+2b+3c = \wtdeg\left(\left( \mathcal{D}^{(2)}\right)^r x\right) = r+1$, we have $a = r+1 - 2b - 3c = r+1-4c$. 
Referring to \eqref{eqn: monomial to plane}, the corresponding condition for $(\lambda, \nu)$ is that $\nu  = \frac{1}{2}\lambda - \frac{r}{2}$, i.e., only the monomials (represented as a lattice point $(\lambda, \nu)$) off this line can provide the monomials whose  $\deg_y$ drops by $1$ after applying $\mathcal{D}^{(2)}$.

\begin{figure}[!htpb]
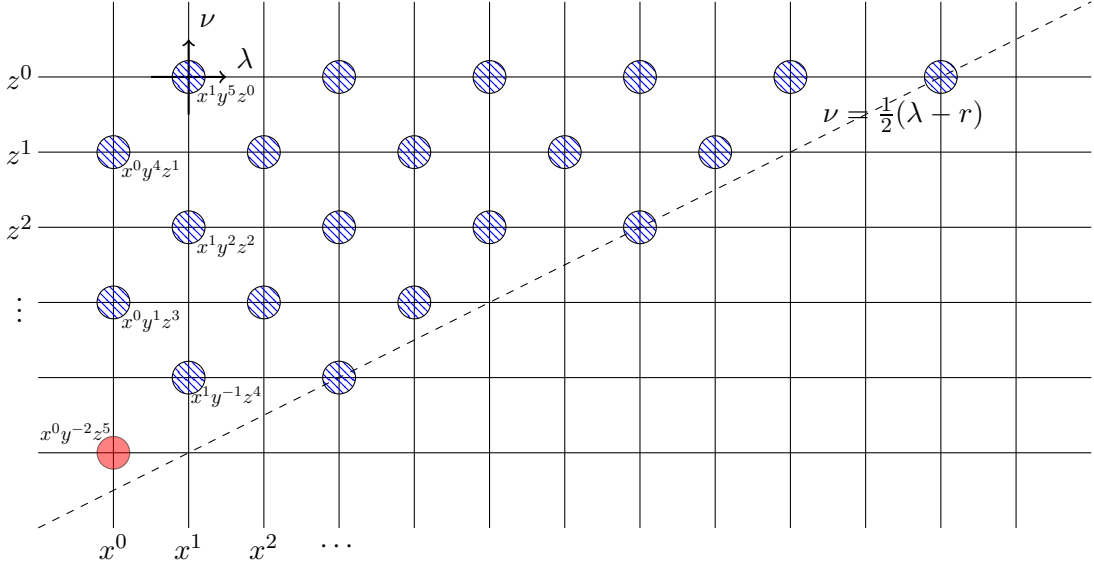

    \centering
    \ctikzfig{fig1}
    \caption{Monomials in $\left( \mathcal{D}^{(2)}\right)^{10}(x)$, represented as lattice points $(\lambda, \nu)$. Note that two bold axes intersects at the origin $(0,0)$, and the red-filled point at $(-1, -5)$ represents $y^{-2}z^5$.}  \label{fig: N=2} 
\end{figure}

Figure~\ref{fig: N=2} shows the situation for $\left( \mathcal{D}^{(2)}\right)^{10} x$. Each points $(\lambda, \nu)$ on the figure represents each monomial $x^{a} y^b z^c$ appearing in $\left(\mathcal{D}^{(2)}\right)^{10}x$ by the relation given in \eqref{eqn: monomial to plane}. The red-filled  point at $(\lambda, \nu) = (-1, -5)$ (which represents $y^{-2}z^{5}$-term) is off the dashed line $\nu  = \frac{1}{2}\lambda  - \frac{r}{2}$. So, 
$\mathcal{D}^{(2)}(y^{-2}z^{5})$ provides a function which contains a monomial whose $\deg_y$ is $ -3$; one can check that $ \mathcal{D}^{(2)}(y^{-2}z^{5}) = 22 x z^5 y^{-2}-2 z^6y^{-3}-20 z^4$. 

From the discussion so far, we conclude the following lemma. 
\begin{lem}\label{lem: mn condition N=2} Let $x^a y^b z^c$ be some monomial in $\left(D^{(2)}\right)^r x$ (up to nonzero coefficients over $\QQ$).
    \begin{enumerate}[(a)]
        \item Its corresponding lattice point $(\lambda, \nu)$ via \eqref{eqn: monomial to plane} satisfies that $\nu \ge \frac{1}{2}(\lambda -r)$. 
        \item $2b+c \ge 0$ and \label{lem: sharp ineq N=2} $b \ge -\frac{r+1}{4}$.
    \end{enumerate}
\end{lem}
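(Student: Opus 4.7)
The plan is to (1) observe that the inequality in (a) is equivalent, via the coordinate change \eqref{eqn: monomial to plane}, to the first inequality of (b); (2) prove $2b+c\ge 0$ by induction on $r$, where the only direction in $\mathcal{D}^{(2)}$ that could drop $2b+c$ is precisely the one whose coefficient vanishes on the boundary $2b+c=0$; and (3) deduce $b\ge -(r+1)/4$ from the first inequality together with the weighted-degree constraint.

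For (1), substituting $(a,b,c)=(\lambda+1,\,(r-\lambda+3\nu)/2,\,-\nu)$ into $2b+c$ gives $2b+c=r-\lambda+2\nu$, so the condition $\nu\ge (\lambda-r)/2$ is literally $2b+c\ge 0$. It therefore suffices to prove both inequalities of (b) for every monomial $x^a y^b z^c$ appearing with nonzero coefficient in $\left(D^{(2)}\right)^r x$.

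For (2), induct on $r$. The case $r=0$ gives $(a,b,c)=(1,0,0)$. For the inductive step, expanding $\mathcal{D}^{(2)}(x^a y^b z^c)$ produces the four candidate target monomials $x^{a+1}y^b z^c$, $x^{a-1}y^{b+1}z^c$, $x^a y^{b-1}z^{c+1}$, and $x^a y^{b+2}z^{c-1}$, for which the values of $2b'+c'$ are respectively $2b+c$, $2b+c+2$, $2b+c-1$, and $2b+c+3$. The first, second, and fourth remain $\ge 0$ by the inductive hypothesis. For the third — the $\deg_y$-decreasing direction — the coefficient equals $-2(2b+c)$, so a contribution from a given source monomial can occur only when $2b+c\ne 0$; combined with the inductive hypothesis this forces $2b+c\ge 1$, hence $2b'+c'\ge 0$. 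Summing over all sources, every monomial of $\left(\mathcal{D}^{(2)}\right)^{r+1}x$ satisfies $2b'+c'\ge 0$.

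For (3), the weighted-degree identity $a+2b+3c=\wtdeg\bigl((D^{(2)})^r x\bigr)=r+1$, combined with $a\ge 0$, $c\ge 0$, and $2b+c\ge 0$ from (2), gives $r+1=a+(2b+c)+2c\ge 2c$, so $c\le (r+1)/2$, whence $b\ge -c/2\ge -(r+1)/4$. The main subtlety of the argument is the exact cancellation in the $\deg_y$-decreasing direction: it is only the vanishing of $-2(2b+c)$ on the critical line $2b+c=0$ that prevents the induction from failing, and this is also what makes the bound in (b) sharp.
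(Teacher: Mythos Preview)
Your proof is correct and follows essentially the same approach as the paper's: the paper argues in the transformed $(\lambda,\nu)$-coordinates that points stay on or above the line $\nu=\frac{1}{2}(\lambda-r)$ because the only $\nu$-decreasing move has vanishing coefficient on that line, while you carry out the identical induction directly in the $(a,b,c)$-coordinates via the invariant $2b+c\ge0$ and the same vanishing coefficient $-2(2b+c)$. The deduction of $b\ge -(r+1)/4$ from $2b+c\ge0$, $a\ge0$, and the weighted-degree identity $a+2b+3c=r+1$ is likewise the same in both.
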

\begin{proof}
    \begin{enumerate}[(a)]
        \item This holds for the initial case $r=0$. As $r$ increases, all points keep lying above or on the line
        $\nu  = \frac{1}{2}(\lambda -r)$, since the line itself is shifted by $-1/2$ in $y$ axis direction on each step, and the point on the line does not provide a monomial with decreased $\deg_y$, i.e., decreased $\nu $ coordinate. 
        \item Referring to \eqref{eqn: monomial to plane}, since $a+2b+3c = r+1$, the inequality $\nu \ge \frac{\lambda }{2}- \frac{r}{2}$ is equivalent to $2b+c\ge0$, which implies the second inequality since $a\ge0$ and $a+2b+3c=r+1$.
    \end{enumerate}
\end{proof}

In the remaining section, we prove that the second inequality of Lemma~\ref{lem: mn condition N=2}~\ref{lem: sharp ineq N=2} is sharp, in the sense that when $r\equiv 3 \pmod 4$, there exists a monomial with $\deg_y = -\frac{r+1}{4}$ in $\left(\mathcal{D}^{(2)}\right)^r(x)$.

To prove this, we need to show that the coefficient of the $y^{-k}z^{2k+1}$-term in $\left( \mathcal{D}^{(2)}\right)^{4k+2}$  is non-zero for all integers $k\ge0$. It can be shown by proving that their coefficients are all $1$ modulo $5$. 

Let  $a_{i,j,k}^{(r)}$ denote the coefficient of the $x^i y^j z^k$-term in $\left( \mathcal{D}^{(2)}\right)^{r}x$. 
By keeping track of the recurrences for the coefficients, we reduce the degrees as follows:
\begin{alignat*}{2}
    a_{0, -k, 2k+1}^{(4k+2)} & = (-4(-k+1) - 2(2k))a_{0, -k+1, 2k}^{(4k+1)}\\ 
    & = -4 a_{0, -k+1, 2k}^{(4k+1)} \equiv a_{0, -k, 2k}^{(4k+1)} \pmod 5\\ 
    & = (-4(-k+2) - 2(2k-1)) a_{0, -k+2, 2k-1}^{(4k)} -a_{1,-k, 2k}^{(4k)} \\ 
    & = -6a_{0, -k+2, 2k-1}^{(4k)} - a_{1, -k, 2k}^{(4k)} \equiv -(a_{0, -k+2, 2k-1}^{(4k)} + a_{1, -k, 2k}^{(4k)}) \pmod 5.
\end{alignat*}
Since \begin{alignat*}{2}
\begin{cases}
    a_{0, -k+2, 2k-1}^{(4k)} & = -8 a_{0, -k+3, 2k-2}^{(4k-1)} - a_{1, -k+1, 2k-1}^{(4k-1)} - 8r a_{0, -k, 2k}^{(4k-1)},\\ 
    a_{1, -k, 2k}^{(4k)} &  =  - 2a_{1, -k+1, 2k-1}^{(4k-1)} + 8r a_{0, -k, 2k}^{(4k-1)},
\end{cases}
\end{alignat*}
we have \[a_{0, -k, 2k+1}^{(4k+2)}   \equiv  3\left(a_{1, -k+1, 2k-1}^{(4k-1)}   + a_{0, -k+3, 2k-2}^{(4k-1)}\right)\pmod 5.\] 
Similarly, since 
\begin{alignat*}{2}
\begin{cases}
    a_{0, -k+3, 2k-2}^{(4k-1)} & = (-8k+4) a_{0, -k+1, 2k-1}^{(4k-2)}-10 a_{0, -k+4, 2k-3}^{(4k-2)} - a_{1, -k+2, 2k-2}^{(4k-2)},\\ 
    a_{1, -k+1, 2k-1}^{(4k-1)} & = -2 a_{1, -k+2, 2k-2}^{(4k-2)} + (8k-2) a_{0, -k+1, 2k-1}^{(4k-2)},
\end{cases}
\end{alignat*}
we have \[a_{0,-k,2k+1}^{(4k+2)} \equiv a_{0,-k+1,2k-1}^{(4k-2)}\pmod{5}.\]
Since $a_{0,0,1}^{(2)} \not\equiv 0\pmod 5$, the coefficient $a_{0,-k, 2k+1}^{(4k+2)}$ never vanishes. 

We conclude that $y^{\ell_r} \cdot \left(D^{(2)}\right)^{r} x \in \QQ[x,y,z]\setminus y\QQ[x,y,z]$, with $\ell_r =\lfloor(r+1)/4\rfloor$.

The analyses for $\left(D^{(2)}\right)^r y$ and $\left( D^{(2)}\right)^r z$ are exactly the same; in conclusion, we have the following lemma. 
\begin{lem} \label{lem: y powers N=2}
    For each integer $r\ge0$, if we let $\ell_r =\lfloor(r+1)/4\rfloor$ then
    \[y^{\ell_r}\cdot \left(D^{(2)}\right)^{r}x,\quad y^{\ell_{r+1}}\cdot \left(D^{(2)}\right)^{r}y,\quad y^{\ell_{r+2}}\cdot \left(D^{(2)}\right)^{r}z \in \mathbb{Q}[x,y,z]\setminus y\mathbb{Q}[x,y,z].\] 
\end{lem}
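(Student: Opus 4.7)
The plan is to split the statement into two parts for each of the three generators $f \in \{x, y, z\}$: an \emph{upper bound} saying that $y^{\ell_r}$ (resp.\ $y^{\ell_{r+1}}$, $y^{\ell_{r+2}}$) suffices to clear the denominator in $\left(D^{(2)}\right)^r f$, and a \emph{sharpness} claim saying that the resulting polynomial is not divisible by $y$. The upper bound is essentially Lemma~\ref{lem: mn condition N=2}(b); the sharpness is the content to prove, and the main obstacle is to verify that certain extremal coefficients do not vanish.

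First, I would handle the upper bound. The text has already applied the linear transformation $T$ to the starting monomial $x$ (lattice point $(1,0,0)^{\tp}$), yielding $b \ge -(r+1)/4$ for every monomial $x^ay^bz^c$ of $\left(D^{(2)}\right)^r x$, so integrality of $b$ gives $b\ge -\ell_r$. The only modification for the other two cases is the initial point: starting from $y$ (lattice point $(0,1,0)^{\tp}$, weighted degree $2$) shifts the controlling line by $-1$ in $y$-axis direction, producing $b\ge -\lfloor(r+2)/4\rfloor=-\ell_{r+1}$; starting from $z$ (point $(0,0,1)^{\tp}$, weighted degree $3$) shifts it by $-3/2$, giving $b\ge -\lfloor(r+3)/4\rfloor=-\ell_{r+2}$. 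In each case the inequality translates directly into the desired polynomiality.

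Second, for sharpness I would identify the extremal monomial (the unique one with $b$ attaining the minimum in each residue class) and show its coefficient is non-zero. For $\left(D^{(2)}\right)^r x$, for instance, one checks that when $r=4k+2$ the extremal monomial is $y^{-k}z^{2k+1}$; when $r=4k+3$ it is $y^{-k-1}z^{2k+2}$; when $r=4k$ it is $xy^{-k}z^{2k}$; and when $r=4k+1$ it is $x^2y^{-k}z^{2k}$. Analogous extremal monomials are identified for the other two starting functions. The text already performs the non-vanishing computation for $a_{0,-k,2k+1}^{(4k+2)}$ by reducing modulo $5$: the recurrence contracts in groups of four steps as $a_{0,-k,2k+1}^{(4k+2)}\equiv a_{0,-k+1,2k-1}^{(4k-2)}\pmod{5}$, and the induction terminates at the clearly non-zero base $a_{0,0,1}^{(2)}$. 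The same strategy applies to each extremal monomial in the remaining cases: only the terms $-4cx^ay^{b+2}z^{c-1}$ and $-(4b+2c)x^ay^{b-1}z^{c+1}$ of $\mathcal{D}^{(2)}$ can decrease $\deg_y$, so the recurrence for the extremal coefficient is short enough to be telescoped modulo a small prime.

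The principal obstacle is the bookkeeping in the sharpness step: one must verify for each of the twelve combinations (three starting functions times four residues of $r$ modulo $4$) that the chosen four-step recurrence modulo $5$ (or another convenient prime) isolates the extremal coefficient cleanly, without unintended cancellations from side terms. Once the mod-$5$ reduction is carried out uniformly, the three parts of the lemma follow simultaneously, and the simplicity-of-zeros argument in Section~\ref{sec: simplicity} proceeds using these polynomial representatives.
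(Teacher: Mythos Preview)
Your proposal is correct and follows essentially the same route as the paper: the upper bound comes from Lemma~\ref{lem: mn condition N=2} (with the shift for the $y$- and $z$-starting points exactly as you describe), and sharpness is established by tracking an extremal coefficient through a four-step recurrence and reducing modulo~$5$. The paper carries out the mod-$5$ verification only for the single coefficient $a_{0,-k,2k+1}^{(4k+2)}$ and then asserts the full conclusion (declaring the $y$- and $z$-cases ``exactly the same''), whereas you propose treating all twelve residue-class/starting-point combinations separately; this is more thorough but also more laborious than strictly necessary, since once Lemma~\ref{lem: integral N} is available, the relation $\frac{\partial}{\partial x}\bigl((D^{(2)})^{n+1}x\bigr)=\tfrac{(n+1)(n+2)}{8}(D^{(2)})^{n}x$ propagates non-vanishing of the extremal monomial from one residue class of $r$ to the next three automatically.
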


We get similar results for the case when $N=3$ as follows and we postpone their proofs given in~\cref{sec: appendix} so that the readers can see the proof of Theorem~\ref{thm: simple zero quasimodular} just below right away.

\begin{lem}\label{lem: mn condition N=3} Let $x^a y^b z^c$ be some monomial in $\left(D^{(3)}\right)^r x$ (up to some nonzero $\QQ$ coefficient).
    \begin{enumerate}[(a)]
        \item Its corresponding lattice point $(\lambda, \nu)$ via \eqref{eqn: monomial to plane} satisfies that $\nu \ge \frac{2}{3}(\lambda -r)$. 
        \item \label{lem: sharp ineq N=3} $4b+3c \ge 0$ and $b \ge -\frac{r+1}{2}$. 
    \end{enumerate}
\end{lem}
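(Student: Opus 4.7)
I would follow the same strategy carried out in Section~\ref{section: degree analysis} for $N=2$. The first step is to write down the action of $\mathcal{D}^{(3)}:=6D^{(3)}$ on a monomial:
\[
\mathcal{D}^{(3)}(x^{a}y^{b}z^{c}) = (a+4b+6c)\,x^{a+1}y^{b}z^{c} - (4b+3c)\,x^{a}y^{b-1}z^{c+1} - 3c\,x^{a}y^{b+2}z^{c-1} - a\,x^{a-1}y^{b+1}z^{c}.
\]
The shift set $(\Delta a,\Delta b,\Delta c)\in\{(1,0,0),(0,-1,1),(0,2,-1),(-1,1,0)\}$ is \emph{identical} to the one for $\mathcal{D}^{(2)}$. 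Therefore the same matrix $T$ from~\eqref{eq:T} and the same lattice-point parametrization~\eqref{eqn: monomial to plane} describe the monomials of $\left(\mathcal{D}^{(3)}\right)^{r}x$, and a direct substitution gives $4b+3c = 2r-2\lambda+3\nu$. Consequently the inequality $\nu \ge \tfrac{2}{3}(\lambda-r)$ in part~(a) is equivalent to the inequality $4b+3c\ge 0$ in part~(b), so it suffices to prove the latter.

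Next I would establish $4b+3c\ge 0$ by induction on $r$. The base case $r=0$ is immediate because the single monomial $x$ has $4b+3c=0$. For the inductive step I would compute the change in $4b+3c$ along each of the four shifts: $(1,0,0)$ contributes $0$, $(0,2,-1)$ contributes $+5$, $(-1,1,0)$ contributes $+4$, and only $(0,-1,1)$ decreases $4b+3c$, by exactly $1$. Crucially, the coefficient of this offending shift in $\mathcal{D}^{(3)}$ is precisely $-(4b+3c)$, which vanishes on the boundary $4b+3c=0$; and since the exponents $b,c$ remain in $\mathbb{Z}$ throughout iteration, whenever $4b+3c>0$ we in fact have $4b+3c\ge 1$, so the new monomial satisfies $4b+3c-1\ge 0$. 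This closes the induction and proves~(a) together with the first inequality of~(b). The remaining bound $b\ge -(r+1)/2$ follows by combining $4b+3c\ge 0$ with the weighted-degree identity $a+2b+3c=r+1$: subtracting the two yields $2b=a+(4b+3c)-(r+1)\ge -(r+1)$ since $a\ge 0$.

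The only subtle point I anticipate is ensuring that the integrality step ``$4b+3c>0 \Rightarrow 4b+3c\ge 1$'' is rigorously valid; this is guaranteed by the observation that each of the four shifts has integer components and the initial monomial $x$ has integer exponents, so $b,c\in\mathbb{Z}$ is preserved along every iteration. Once this and the identification of the boundary $4b+3c=0$ with $\nu = \tfrac{2}{3}(\lambda-r)$ are in place, the induction is a direct parallel of the argument in Section~\ref{section: degree analysis} for $N=2$, with the line of slope $\tfrac{1}{2}$ there replaced by the line of slope $\tfrac{2}{3}$ here.
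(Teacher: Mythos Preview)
Your proposal is correct and follows essentially the same approach as the paper's proof: both identify the shift $(0,-1,1)$ as the only one that can decrease the relevant quantity, observe that its coefficient $-(4b+3c)$ vanishes on the boundary, and close the induction by integrality. The only cosmetic difference is that the paper phrases the induction in terms of the moving line $\nu=\tfrac{2}{3}(\lambda-r)$ in the $(\lambda,\nu)$-plane, while you track the integer invariant $4b+3c$ directly; your own identity $4b+3c=2r-2\lambda+3\nu$ shows these are the same argument.
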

\begin{lem} \label{lem: y powers N=3}For $r\ge0$, let $\ell_r =  \begin{cases}
    \lfloor (r+1)/2 \rfloor -1& \text{if $\ell \equiv 1,2, 3 \pmod 6$}, \\ 
    \lfloor (r+1)/2 \rfloor& \text{otherwise.}
    \end{cases}\quad$ Then,
    \[y^{\ell_r}\cdot \left(D^{(3)}\right)^{r}x,
    \quad y^{\ell_{r+1}}\cdot \left(D^{(3)}\right)^{r}y,
    \quad y^{\ell_{r+2}}\cdot \left(D^{(3)}\right)^{r}z \in \mathbb{Q}[x,y,z]\setminus y\mathbb{Q}[x,y,z].\] 
\end{lem}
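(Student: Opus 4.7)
The plan is to mirror the proof of the $N=2$ analogue, Lemma~\ref{lem: y powers N=2}, with $\mathcal{D}^{(2)}$ replaced by $\mathcal{D}^{(3)} := 6 D^{(3)}$. A direct computation shows that $\mathcal{D}^{(3)}$ sends a monomial $x^a y^b z^c$ to a $\mathbb{Q}$-linear combination of monomials obtained by the same four lattice shifts $(1,0,0)$, $(-1,1,0)$, $(0,-1,1)$, $(0,2,-1)$ as in the $N=2$ case, now with coefficients $(a+4b+6c)$, $-a$, $-(4b+3c)$, and $-3c$. Since the four moves lie in the same directions as in the $N=2$ case, the matrix $T$ from~\eqref{eq:T} and the parametrization~\eqref{eqn: monomial to plane} apply without change.

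I would first prove Lemma~\ref{lem: mn condition N=3} by the same lattice-geometry argument used for Lemma~\ref{lem: mn condition N=2}. The only difference is that the ``forbidden line'' where the downward-$y$ move has zero coefficient is now $\nu = \tfrac{2}{3}(\lambda - r)$, arising from $4b+3c=0$ rather than $4b+2c=0$. An induction on $r$ propagates the invariant $\nu \ge \tfrac{2}{3}(\lambda - r)$, which translates back via~\eqref{eqn: monomial to plane} to $4b+3c \ge 0$; combined with $a \ge 0$ and $a+2b+3c=r+1$, this gives $b \ge -(r+1)/2$.

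To determine $\ell_r$, I would analyze each residue class of $r \pmod 6$ separately, identifying the integer lattice point $(a,b,c)$ that realizes the minimal admissible $b$. The extremal rational value $b = -(r+1)/2$ corresponds to $(a,b,c) = (0,\, -(r+1)/2,\, 2(r+1)/3)$, which is integral only when $r \equiv 5 \pmod 6$. A routine integrality analysis of $a+2b+3c=r+1$ together with $4b+3c\ge 0$ and $a,c \in \Z_{\ge 0}$ pins down precise integer minimizers in the remaining five classes, producing the claimed formula for $\ell_r$; for instance, the extremal exponent triples are $(1,-3k,4k)$ when $r=6k$, $(0,-(3k+1),4k+2)$ when $r=6k+3$, and $(0,-(3k+3),4k+4)$ when $r=6k+5$.

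The main obstacle is to verify that the coefficient of the extremal monomial in $(\mathcal{D}^{(3)})^r x$ is non-zero. Following the modular-reduction strategy of the $N=2$ proof, I would unwind the action of $\mathcal{D}^{(3)}$ one step at a time to express the coefficient as a product of structure constants $(a+4b+6c)$, $-a$, $-(4b+3c)$, $-3c$ evaluated along a chain of ``non-trivial'' moves, and then reduce modulo a small prime $p$ coprime to all these constants (e.g.\ $p=5$ or $p=7$) to show non-vanishing. If several monomials share the extremal $b$, one must sum their coefficients before reducing; since six residue classes modulo~$6$ must be treated rather than four modulo~$4$, the bookkeeping is heavier than for $N=2$, but no fundamentally new idea is needed. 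The analogous assertions for $(\mathcal{D}^{(3)})^r y$ and $(\mathcal{D}^{(3)})^r z$ reduce to the $x$ case via the weighted-degree shifts $\wtdeg((\mathcal{D}^{(3)})^r y) = r+2$ and $\wtdeg((\mathcal{D}^{(3)})^r z) = r+3$, yielding the minimizers $\ell_{r+1}$ and $\ell_{r+2}$ respectively.
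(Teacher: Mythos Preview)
Your proposal is correct and follows essentially the same approach as the paper's proof in Appendix~A: the paper uses the identical matrix $T$ and parametrization~\eqref{eqn: monomial to plane}, derives the barrier line $\nu=\tfrac{2}{3}(\lambda-r)$ from the vanishing condition $4b+3c=0$, and then verifies non-vanishing of the extremal coefficients by reduction modulo a small prime. Two minor points of comparison: the paper works specifically modulo~$7$ (your suggestion of $p=5$ would run into trouble, since factors such as $-5$ and $-35$ appear in the recursion and the argument collapses mod~$5$); and rather than treating all six residue classes, the paper tracks only the three steps $r\equiv 2,3,4\pmod 6$ at which the minimal $\deg_y$ actually drops, showing that $a_{0,-3k,4k+1}^{(6k+2)}$, $a_{0,-3k-1,4k+2}^{(6k+3)}$, $a_{0,-3k-2,4k+3}^{(6k+4)}$ are each $\not\equiv 0\pmod 7$ via explicit six-step recurrences. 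Also, your final sentence slightly overstates the reduction: the $y$ and $z$ cases are handled by a \emph{parallel} argument with shifted initial data, not by a literal reduction to the $x$ case, and the paper likewise only asserts they ``can be done in the same way.''
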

\subsection{Proof of the Theorem~\ref{thm: simple zero quasimodular}} 
With the results in Section~\ref{section: degree analysis}, we prove Theorem~\ref{thm: simple zero quasimodular}. 

Recall \eqref{eqn: DN defn} for the definition of $D^{(N)}$. We define the  auxiliary differential operators $D^{(N)}_t$ for $t=x,y,z$, as 
\[D^{(N)}_t:= p^{(N)}_t f_x + q^{(N)}_t f_y + r^{(N)}_t f_z,\]
for each $N=1, 2,3$. Especially $D^{(N)}_x$ plays a special role among others as we see later in this section, thus we let $\widetilde{D^{(N)}}:= D^{(N)}_x$. 

We note that for $N=1,2,3$,
\[ \widetilde{D^{(N)}}f = \left( 2xf_x +4y f_y + 6z f_z\right)/d, \] 
and especially, 
\begin{equation} \label{eqn: DNxyz}
\widetilde{D^{(N)}}x = (2/d)x,\quad  \widetilde{D^{(N)}}y = (4/d)y,\quad  \widetilde{D^{(N)}}z=(6/d)z.
\end{equation}

\ 

We generalize \cite[Lemma 16, Lemma 17]{SJ20} for the case $N= 1$ and get the following two lemmas for $N=2, 3$.
\begin{lem} \label{lem: interplaying N} We have the following relations of $D^{(N)}$, $\widetilde{D^{(N)}}$ and $\px$ for each $N=1,2,3$;
    \begin{enumerate}[(a)]
        \item $\px D^{(N)} f = \widetilde{D^{(N)}} f + D^{(N)}(f_x)$ for $f\in \QQ[x,y,z, 1/y]$. 
        \item  $\widetilde{D^{(N)}} \left(D^{(N)}\right)^n =  \left(D^{(N)}\right)^n \widetilde{D^{(N)}} + \frac{2n}{d}\left(D^{(N)}\right)^n$ for all $n\ge1$.
    \end{enumerate}
\end{lem}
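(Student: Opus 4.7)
The plan is to treat (a) as a one-line product-rule computation and to deduce (b) by first establishing the base case $n=1$ as a commutator identity, then iterating by induction on $n$.

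For (a), starting from $D^{(N)} f = p^{(N)} f_x + q^{(N)} f_y + r^{(N)} f_z$ and differentiating in $x$, the terms in which $\px$ hits the coefficients $p^{(N)}, q^{(N)}, r^{(N)}$ collect, by the very definition of $\widetilde{D^{(N)}} = D^{(N)}_x$, into $\widetilde{D^{(N)}} f$, while the terms in which $\px$ commutes past to hit $f$ reassemble into $D^{(N)}(f_x)$ since mixed partial derivatives commute.

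The crux of (b) is recognizing that $\widetilde{D^{(N)}}$ is the weighted Euler operator $\tfrac{1}{d}(2x\px + 4y\py + 6z\pz)$, which is already recorded in the excerpt just before the lemma and can also be verified directly for each $N=1,2,3$ by differentiating $p^{(N)}, q^{(N)}, r^{(N)}$ in $x$. Consequently, on any weighted-homogeneous monomial $x^a y^b z^c$ of weighted degree $w = a + 2b + 3c$ (allowing $b \in \Z$, so that all of $\Q[x,y,z,1/y]$ is covered), $\widetilde{D^{(N)}}$ acts as multiplication by the scalar $2w/d$. Since each coefficient $p^{(N)}, q^{(N)}, r^{(N)}$ is weighted homogeneous of precisely the right degree so that $D^{(N)}$ raises weighted degree by exactly $1$, the commutator acts on a weighted-homogeneous $f$ of weighted degree $w$ as $\tfrac{2(w+1)}{d} - \tfrac{2w}{d} = \tfrac{2}{d}$ times $D^{(N)} f$; by linearity, this gives $\widetilde{D^{(N)}} D^{(N)} = D^{(N)} \widetilde{D^{(N)}} + \tfrac{2}{d} D^{(N)}$ on the whole of $\Q[x,y,z,1/y]$, which is the $n=1$ case.

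The inductive step is then routine: writing $\widetilde{D^{(N)}} (D^{(N)})^n = \bigl(\widetilde{D^{(N)}} D^{(N)}\bigr)(D^{(N)})^{n-1}$, substituting the $n=1$ identity, and applying the induction hypothesis to the inner $\widetilde{D^{(N)}}(D^{(N)})^{n-1}$ yields the claimed relation with scalar $\tfrac{2n}{d}$. The whole argument is essentially formal; the only substantive point is the identification of $\widetilde{D^{(N)}}$ with the weighted Euler operator, which is precisely the mechanism that collapses the commutator to a scalar multiple of $D^{(N)}$ and makes the induction go through. I do not anticipate any real obstacle beyond this identification.
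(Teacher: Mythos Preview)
Your proof is correct. Part (a) is identical to the paper's argument (the product rule applied to $D^{(N)}f = p^{(N)}f_x + q^{(N)}f_y + r^{(N)}f_z$). For part (b), both you and the paper first establish the commutator identity $\widetilde{D^{(N)}}D^{(N)} - D^{(N)}\widetilde{D^{(N)}} = \tfrac{2}{d}D^{(N)}$ (the $n=1$ case) and then induct, but the derivations of this base case differ. The paper proceeds by an explicit matrix computation: it writes down the Jacobian $A = (\partial_t s^{(N)})_{s,t}$ and the second-derivative matrix $B$, verifies the relations $A(p_x,q_x,r_x)^{\tp} = \tfrac{2}{d}(2p,3q,4r)^{\tp}$ and $B(p,q,r)^{\tp} = \tfrac{2}{d}(p,2q,3r)^{\tp}$, and then asserts that ``direct calculation'' yields the commutator. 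Your route is more conceptual: you observe that $\widetilde{D^{(N)}} = \tfrac{1}{d}(2x\partial_x + 4y\partial_y + 6z\partial_z)$ is the weighted Euler operator acting on a weighted-homogeneous element of degree $w$ as multiplication by $2w/d$, and that $D^{(N)}$ raises weighted degree by exactly $1$, so the commutator is forced to be $\tfrac{2}{d}D^{(N)}$ by a pure degree count. Your argument is shorter and explains \emph{why} the commutator collapses to a scalar multiple of $D^{(N)}$ (the paper's Remark after the lemma in fact points out that this collapse is special); the paper's computation, while more hands-on, makes the verification for each $N$ completely explicit and does not rely on checking that every term of $p^{(N)}, q^{(N)}, r^{(N)}$ has the correct weighted degree.
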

\begin{proof}
    (a) follows from the direct calculation; for $t = x, y, z$,
        \begin{align*}
            \frac{\partial }{\partial t} D^{(N)}f &= p^{(N)}_tf_x + q^{(N)}_t f_y + r^{(N)}_t f_z + p^{(N)} f_{xt} + q^{(N)} f_{yt} + r^{(N)} f_{zt}  = D^{(N)}_t  f + D^{(N)}(f_t).
        \end{align*}

        In order to prove (b), we can verify the following:
        \begin{align*}&A  := \begin{pmatrix} 
            p^{(N)}_x & p^{(N)}_y & p^{(N)}_z \\ 
            q^{(N)}_x & q^{(N)}_y & q^{(N)}_z \\ 
            r^{(N)}_x & r^{(N)}_y & r^{(N)}_z \end{pmatrix} = 
            \frac{1}{d}\begin{pmatrix}
            2x & -1 & 0 \\ 
            4y & 4x & -4 \\ 
            6z & \frac{4 (m-3) z^2}{ (m-2) y^2}- dy  &6x-\frac{8 (m-3) z}{ (m-2) y}\end{pmatrix}, \\ 
            &A\begin{pmatrix}p^{(N)}_x \\ q^{(N)}_x \\ r^{(N)}_x
            \end{pmatrix} = 
            \begin{pmatrix}
                \widetilde{D}p^{(N)} \\
                \widetilde{D}q^{(N)}\\
                \widetilde{D}r^{(N)}
            \end{pmatrix}
            =
            \frac{2}{d}\begin{pmatrix}
                2p\\
                3q \\
                4r
            \end{pmatrix},
        \end{align*}
       and
        \[ B: = \begin{pmatrix} p_{xx} & p_{xy} & p_{xz} \\ 
            q_{xx} & q_{xy} & q_{xz} \\
            r_{xx} & r_{xy} & r_{xz} \end{pmatrix} =
            \frac{2}{d}\begin{pmatrix}
            1& 0 & 0 \\
            0 & 2 & 0 \\
            0 & 0 & 3 \end{pmatrix}, \quad
            B \begin{pmatrix} p \\ q \\ r \end{pmatrix} =
             \begin{pmatrix}
                D p_x \\ 
                D q_x \\
                D r_x \end{pmatrix}
                =
            \frac{2}{d}\begin{pmatrix}
                p \\ 
                2q \\
                3r \end{pmatrix}.
        \] 
        With these equations, the direct calculation shows that $\widetilde{D^{(N)}}D^{(N)}f - D^{(N)}\widetilde{D^{(N)}} f = \frac{2}{d}D^{(N)}f. $ With this and (a), we can show (b) inductively. 
    \end{proof}
    \begin{rmk} The property (b) of Lemma~\ref{lem: interplaying N} is not naturally arising from an arbitrary choice of $p$, $q$ and $r$. The boilage from  tedious calculations of $\widetilde{D^{(N)}}D^{(N)} - D^{(N)} \widetilde{D^{(N)}}$ consists of $D^{(N)}_x(p^{(N)}) - D^{(N)}(p^{(N)}_x)$, $D^{(N)}_x(q^{(N)}) - D^{(N)}(q^{(N)}_x)$ and $D^{(N)}_x(r^{(N)}) - D^{(N)}(r^{(N)}_x)$ terms, which differ from $p,q$ and $r$ only by a scalar  multiple (specifically, $2/d$), respectively. 
    \end{rmk}

\begin{lem} Let $N=1,2,3$. For each integer $n\ge1$, we have 
\label{lem: integral N}
    \begin{align}
        \px \left( (D^{(N)})^n x \right) &= \frac{n(n+1)}{d} (D^{(N)})^{n-1}x, \label{eqn: interplay x power N}\\ 
        \px \left( (D^{(N)})^n y \right) &= \frac{n(n+3)}{d} (D^{(N)})^{n-1}y, \label{eqn: interplay y power N}\\ 
        \px \left( (D^{(N)})^n z \right) &= \frac{n(n+5)}{d} (D^{(N)})^{n-1}z, \label{eqn: interplay z power N}
    \end{align}
    i.e., $\frac{\partial}{\partial x} (D^{(N)})^n x$ is a scalar multiple of $(D^{(N)})^{n-1}x$, and so are those at $y$ and $z$.
\end{lem}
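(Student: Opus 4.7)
The plan is to prove all three identities simultaneously by induction on $n \geq 1$, using parts (a) and (b) of \cref{lem: interplaying N} together with \eqref{eqn: DNxyz}.

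For the base case $n=1$, part (a) of \cref{lem: interplaying N} applied to $f = t$ for $t \in \{x, y, z\}$ gives $\px D^{(N)} t = \widetilde{D^{(N)}} t + D^{(N)}(t_x)$. Since $t_x \in \{0, 1\}$ and $D^{(N)}$ annihilates constants, this reduces to $\px D^{(N)} t = \widetilde{D^{(N)}} t$, which by \eqref{eqn: DNxyz} equals $(2/d)x$, $(4/d)y$, $(6/d)z$ respectively. These match $n(n+1)/d$, $n(n+3)/d$, $n(n+5)/d$ evaluated at $n=1$.

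For the induction step, assume the three identities hold at $n-1$. Applying part (a) of \cref{lem: interplaying N} with $f = (D^{(N)})^{n-1}t$ gives
\[ \px (D^{(N)})^n t = \widetilde{D^{(N)}}\bigl((D^{(N)})^{n-1}t\bigr) + D^{(N)}\bigl(\px (D^{(N)})^{n-1}t\bigr).\]
The induction hypothesis converts the second term into a scalar multiple of $(D^{(N)})^{n-1}t$. For the first term, part (b) of \cref{lem: interplaying N} commutes $\widetilde{D^{(N)}}$ past $(D^{(N)})^{n-1}$, producing $(D^{(N)})^{n-1}\widetilde{D^{(N)}}t + \tfrac{2(n-1)}{d}(D^{(N)})^{n-1}t$, after which \eqref{eqn: DNxyz} evaluates $\widetilde{D^{(N)}}t$. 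Adding the resulting coefficients yields the claim; for example, when $t=x$ one obtains $\tfrac{2 + 2(n-1) + (n-1)n}{d} = \tfrac{n(n+1)}{d}$, while $t=y$ gives $\tfrac{4 + 2(n-1) + (n-1)(n+2)}{d} = \tfrac{n(n+3)}{d}$ and $t=z$ gives $\tfrac{6 + 2(n-1) + (n-1)(n+4)}{d} = \tfrac{n(n+5)}{d}$.

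I do not anticipate any serious obstacle here: the identity is a clean formal consequence of \cref{lem: interplaying N} combined with the eigenvalues $2, 4, 6$ of $d\widetilde{D^{(N)}}$ on the generators $x, y, z$ (reflecting the weights of the Eisenstein series), and the induction step boils down to the three arithmetic identities displayed above. In particular, the delicate degree analysis of \cref{lem: y powers N=2} and \cref{lem: y powers N=3}, which control the negative powers of $y$ that can appear in $(D^{(N)})^{n}t$, plays no role here and will be invoked only in the subsequent deduction of \cref{thm: simple zero quasimodular}.
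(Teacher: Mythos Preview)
Your proof is correct and follows essentially the same approach as the paper: both arguments use induction based on parts (a) and (b) of \cref{lem: interplaying N} together with the eigenvalue identities \eqref{eqn: DNxyz}. The only cosmetic difference is that the paper first establishes the general identity $\px ((D^{(N)})^n f) = n (D^{(N)})^{n-1} \widetilde{D^{(N)}} f + (D^{(N)})^n f_x + \tfrac{n(n-1)}{d} (D^{(N)})^{n-1} f$ for arbitrary $f$ and then specializes to $f=x,y,z$, whereas you specialize first and carry out the induction directly on the three cases---but the underlying computation is identical.
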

\begin{proof}
    We prove the following by induction on $n$:
    \[\px ((D^{(N)})^n f) = n (D^{(N)})^{n-1} \widetilde{(D^{(N)})} f + (D^{(N)})^n f_x + \frac{n(n-1)}{d} (D^{(N)})^{n-1} f.\]
    
    For $n=1$, we have that $\frac{\partial}{\partial x} D^{(N)}f  =\widetilde{D^{(N)}}f + D^{(N)}(f_x)$ by Lemma~\ref{lem: interplaying N}. To proceed by induction, suppose it holds for $n-1$. Then by Lemma~\ref{lem: interplaying N}, we have that
\begin{align*}
 \frac{\partial}{\partial x} ((D^{(N)})^n f)& =  \frac{\partial}{\partial x} D^{(N)} ({(D^{(N)})}^{n-1} f)\\
&=\widetilde{D^{(N)}}({(D^{(N)})}^{n-1}f)+D^{(N)}\left(\frac{\partial}{\partial x} ({D^{(N)}})^{n-1} f\right)\\
&=({D^{(N)}})^{n-1} \widetilde{D^{(N)}}f + \frac{2(n-1)}{d}{(D^{(N)})}^{n-1}f\\
&\phantom{=}+(n-1)(D^{(N)})^{n-1} \widetilde{D^{(N)}} f + (D^{(N)})^n f_x+\frac{(n-1)(n-2)}{d}(D^{(N)})^{n-1}f\\ 
    & = n (D^{(N)})^{n-1} \widetilde{D^{(N)}} f + (D^{(N)})^n f_x + \frac{n(n-1)}{d} (D^{(N)})^{n-1} f.
\end{align*}

Substituting $x$, $y$, $z$ for $f$ in turn yields the desired result, referring to \eqref{eqn: DNxyz}.
\end{proof}

\begin{lem} \label{lem: nondividing N} Let $N=1,2,3$. Let $f  \in \QQ[x,y,z]$ be a prime factor of the numerator of a reduced form of one of $(D^{(N)})^n x$, $(D^{(N)})^n y$, and $(D^{(N)})^n z$, for some integer $n\ge1$. Then $f$ divides neither $y$ nor the numerator of a reduced form of $D^{(N)} f$, and $f_x \ne0$. 
\end{lem}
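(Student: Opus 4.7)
The lemma packages three assertions about $f$: (i) $f$ is not associate to $y$, (ii) $f_x\neq 0$, and (iii) $f$ does not divide the numerator of a reduced form of $D^{(N)}f$. I plan to establish them in this order, since (ii) uses (i) and (iii) uses both. Claim (i) is immediate from \cref{lem: y powers N=2} and \cref{lem: y powers N=3} (plus a direct check for $N=1$): the numerator $P_n$ of the reduced form of $F_n := \left(D^{(N)}\right)^n t$ for $t\in\{x,y,z\}$ and $n\geq 1$ is not divisible by $y$, so any prime factor $f$ of $P_n$ is coprime to $y$ and to $Q_n = y^{\ell_n}$.

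For claim (ii), I argue by contradiction. If $f_x=0$, then $f\in\overline{\QQ}[y,z]$. \cref{lem: integral N} gives $\tfrac{\partial}{\partial x}F_n = c_n F_{n-1}$ with $c_n\in\QQ^{\times}$, which, because $Q_n$ is $x$-free, cross-multiplies into the polynomial identity $P_{n,x}Q_{n-1} = c_n Q_n P_{n-1}$. Writing $P_n = f^{k}g$ with $f\nmid g$ yields $P_{n,x}=f^{k}g_x$, and the coprimality of $f$ with $Q_n$ (from (i)) forces $f^{k}\mid P_{n-1}$. Iterating this descent yields $f\mid P_0 = t\in\{x,y,z\}$. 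The cases $t=x$ and $t=y$ contradict the hypothesis on $f$ immediately; the case $t=z$ forces $f=z$, and one further descent step then demands $z$ to divide the numerator of $D^{(N)}z$, which fails by direct computation for each $N=1,2,3$.

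For claim (iii), I argue by contradiction using the $f$-adic valuation $v_f$ on $\overline{\QQ}(x,y,z)$, well-defined because $\overline{\QQ}[x,y,z]$ is a UFD. Since $f\neq y$ by (i), the derivation $D^{(N)}$ preserves the DVR $\overline{\QQ}[x,y,z]_{(f)}$ (its coefficients lie in $\overline{\QQ}[x,y,z,y^{-1}]$). Assume for contradiction $v_f(D^{(N)}f)\geq 1$, set $k_m := v_f(F_m)$, and write $F_m = f^{k_m}h_m$ with $v_f(h_m)=0$. The Leibniz rule applied to $D^{(N)}F_m$ together with the assumption yields the forward bound $v_f(F_{m+1})\geq k_m$, while Leibniz applied to $\tfrac{\partial}{\partial x}F_m$, combined with \cref{lem: integral N} and $v_f(f_x)=0$ (which holds by (ii)), yields the sharp descent $v_f(F_{m-1}) = k_m-1$ whenever $k_m\geq 1$. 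Applying descent to $F_{m+1}$ pins $k_{m+1} = k_m+1$ for all $m\geq n$, so $k_m = k_n + (m-n)$ grows strictly linearly in $m$.

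The main obstacle is to turn this unbounded linear growth into a contradiction. For this I use the weighted-degree bound $\wtdeg(f)\cdot k_m \leq \wtdeg(P_m) = \wtdeg(F_m) + 2\ell_m \sim Cm$, where $C=1,\,3/2,\,2$ for $N=1,2,3$ respectively, with the $\ell_m$ asymptotics drawn from \cref{section: degree analysis} and \cref{sec: appendix}. This forces $\wtdeg(f)\leq C$. Combined with the weighted-homogeneity of $f$ (inherited from that of $P_m$) and (i), the only remaining candidates are $f=x$ (for every $N$) and $f=x^{2}+cy$ with $c\neq 0$ (only for $N=3$). Both are eliminated by direct computation: $D^{(N)}x = (x^{2}-y)/d$ is coprime to $x$, while $D^{(3)}(x^{2}+cy)\equiv \bigl((c-1)xy - 2cz\bigr)/3 \pmod{x^{2}+cy}$, which vanishes for no $c$. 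This contradicts the assumption $f\mid D^{(N)}f$ and completes the proof.
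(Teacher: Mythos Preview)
Your proof is correct, but parts (ii) and (iii) take a genuinely different route from the paper's. For (iii), the paper invokes the external classification \cite[Theorem~3.2]{IL}: the only principal prime ideals $I\subset\QQ[x,y,z]$ with $yD^{(N)}(I)\subseteq I$ are $(y)$ and $(y^{3}-z^{2})$, so the assumption $f\mid D^{(N)}f$ forces $f=y^{3}-z^{2}$; this is then ruled out because the top-$x$-degree coefficient of $P_n$ is a pure power of $y$, and divisibility by $y^{3}-z^{2}$ would produce a monomial violating Lemma~\ref{lem: mn condition N=2}(b) or Lemma~\ref{lem: mn condition N=3}(b). For (ii) the paper uses the same leading-coefficient observation: any $x$-free prime factor of $P_n$ would have to divide that power of $y$, contradicting~(i). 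Your argument is longer but fully self-contained---it replaces the cited classification by the valuation-growth estimate $\wtdeg(f)\cdot k_m\le \wtdeg(P_m)\sim Cm$, which pins $\wtdeg(f)\le 2$ and leaves only $f=x$ or $f=x^{2}+cy$ to eliminate by hand. Two small remarks: the implication ``$f_x\ne0\Rightarrow v_f(f_x)=0$'' deserves one line (it follows from $\deg f_x<\deg f$ for prime $f$), and since $f$ is only assumed prime in $\QQ[x,y,z]$ you should run the valuation argument over $\QQ$ rather than $\overline{\QQ}$; neither affects the validity.
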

\begin{proof}
    Note that the only principal ideals $I$ with property $yD^{(N)}(I) \subseteq I$ are $I=(y)$ or $I=(y^3-z^2)$. (See \cite[Theorem 3.2]{IL}.)

    
    
    Let $n\ge1$ be a given integer. Let $F/y^\ell$ be a reduced form of $(D^{(N)})^n x$ in $\Q(x,y,z)$ for some integer $\ell\ge0$. We claim that $F\not \in y \QQ[x,y,z]$. When $N=1$, since the coefficient of $x^{n+1}$-term in $D^{(n)}x$ is not zero, $F \not \in y\QQ[x,y,z]$. For $N=2,3$, we have shown that $\ell \ge1$, i.e., $y \nmid F$, in Lemma~\ref{lem: y powers N=2} and Lemma~\ref{lem: y powers N=3} when $n\ge3$. Since $(D^{(N)})x$ and $(D^{(N)})^2 x$ don't have $y$ as a factor of each, $F \not\in y\QQ[x,y,z]$ for $n\ge1$.

    Let $f$ be a prime factor of $F$. If $f$ divides the numerator of a reduced form of $D^{(N)}f$, then $f \mid y D^{(N)}f$ (in $\QQ[x,y,z]$), i.e., $f \in (y)$ or $y \in (y^3 - z^2)$. Since $f\not\in (y)$, $f = k(y^3 -z^2)$ for some $k \in \QQ$. Thus, we have $(y^3-z^2)\mid F$.         

    Note that there is a non-vanishing $x^{n+1}$-term in $(D^{(N)})^{n}x$. Since $y^3-z^2$ divides $F$, $(D^{(N)})^nx$ has a non-zero $x^{n+1}y^{-3}z^2$-term   with the same (non-zero) coefficient. 
    This is clearly  impossible when $N=1$, and also impossible when $N=2, 3$, referring to Lemma~\ref{lem: mn condition N=2}~(b) and Lemma~\ref{lem: mn condition N=3}~(b).
    
    Also, since $f \mid F = y^\ell (D^{(N)})^n x$ has a $x^{n+1}y^\ell$-term, we can show that $f_x \ne 0$ by a similar argument. 

   The proof for the cases of $(D^{(N)})^n y$ or $(D^{(N)})^n z$ can be done in a similar manner.
    \end{proof}

    Now we are ready to prove the following proposition.
    
    \begin{prop}\label{thm: simple zeros N} Let $N = 1, 2, 3$. For  each integer $n\ge1$, we have 
    \begin{align*} \ngcd\left((D^{(N)})^{n} x, (D^{(N)})^{n+1} x \right)&= \ngcd\left(((D^{(N)})^{n}y, (D^{(N)})^{n+1}y\right) \\
    & = \ngcd\left(((D^{(N)})^{n}z, (D^{(N)})^{n+1}z\right) =1.\end{align*} Here $\ngcd(f, g)$ is the gcd of the numerators of reduced forms of $f$ and $g$, where $f, g  \in \QQ[x, y, z, 1/y]$.
    \end{prop}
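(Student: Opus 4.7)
The plan is to prove all three identities uniformly by a short $f$-adic valuation argument that combines Lemma~\ref{lem: integral N}, which makes $\frac{\partial}{\partial x}$ behave as an ``inverse'' of $D^{(N)}$ on these iterates, with Lemma~\ref{lem: nondividing N}, which supplies exactly the nondegeneracy conditions needed to keep track of multiplicities cleanly. Throughout, fix $t \in \{x, y, z\}$ and write $g_n := (D^{(N)})^n t$.

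Suppose for contradiction that some prime $f \in \QQ[x, y, z]$ satisfies $v_f(g_n) \ge 1$ and $v_f(g_{n+1}) \ge 1$, where $v_f$ denotes the $f$-adic valuation on $\QQ(x, y, z)$; this is well-defined on the rational functions in question since $f \nmid y$ by Lemma~\ref{lem: nondividing N}. That lemma further yields $f \nmid D^{(N)} f$ and $f_x \ne 0$, and since $\deg_x(f_x) < \deg_x(f)$ whenever $f_x \ne 0$, we also have $f \nmid f_x$.

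The heart of the argument is a single multiplicity formula. For any rational function $g$ with $v_f(g) = e \ge 1$, write $g = f^e h$ with $v_f(h) = 0$. The Leibniz rule gives
\[ D^{(N)}(g) = e f^{e-1} (D^{(N)} f)\, h + f^e D^{(N)}(h), \qquad \frac{\partial g}{\partial x} = e f^{e-1} f_x\, h + f^e \frac{\partial h}{\partial x}. \]
In each sum, the first summand has $f$-valuation exactly $e-1$ (using $f\nmid D^{(N)} f$ or $f \nmid f_x$, together with $f \nmid h$ and $f \nmid y$ to control $y$-denominators), while the second has $f$-valuation $\ge e$; hence $v_f(D^{(N)} g) = v_f(\tfrac{\partial g}{\partial x}) = e - 1$. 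Applying this to $g_n$ gives $v_f(g_{n+1}) = v_f(g_n) - 1$. Applying it instead to $g_{n+1}$ and invoking the identity $\tfrac{\partial}{\partial x} g_{n+1} = C_{n+1}\, g_n$ with $C_{n+1} \ne 0$ from Lemma~\ref{lem: integral N} gives $v_f(g_n) = v_f(\tfrac{\partial}{\partial x} g_{n+1}) = v_f(g_{n+1}) - 1$. These two equalities together force $v_f(g_n) = v_f(g_n) - 2$, a contradiction. This rules out any common prime factor and proves the proposition for $t = x$; the cases $t = y, z$ are identical since Lemma~\ref{lem: integral N} and Lemma~\ref{lem: nondividing N} apply uniformly to all three of $x, y, z$.

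The main obstacle has already been dispatched upstream: the multiplicity formula depends crucially on the three nondegeneracy conditions of Lemma~\ref{lem: nondividing N}, the hardest of which is $f \nmid D^{(N)} f$. This condition is subtle because $y$ and $y^3 - z^2$ are the only non-unit elements of $\QQ[x,y,z]$ whose principal ideals are stable under $yD^{(N)}$ (by \cite[Theorem 3.2]{IL}), and ruling out the $(y^3-z^2)$ case for $N = 2, 3$ is precisely what required the delicate monomial analysis of Section~\ref{section: degree analysis}. Once those ingredients are in place, the proof of the proposition itself is a purely formal valuation calculation.
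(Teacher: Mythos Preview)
Your proof is correct and follows essentially the same strategy as the paper's: both use Lemma~\ref{lem: integral N} to pass from $(D^{(N)})^{n+1}t$ back to $(D^{(N)})^n t$ via $\partial_x$, and both use the Leibniz rule for $D^{(N)}$ to pass in the other direction, with Lemma~\ref{lem: nondividing N} supplying the conditions $f\nmid y$, $f\nmid D^{(N)}f$, $f_x\ne 0$ at the key moments. The only difference is packaging: you phrase the argument via the $f$-adic valuation and the clean multiplicity identity $v_f(D^{(N)}g)=v_f(\partial_x g)=v_f(g)-1$, whereas the paper carries out the same two computations explicitly with numerators and $y$-denominators; your version is marginally cleaner (in particular, you are explicit about $f\nmid f_x$ via the $\deg_x$ drop, which the paper leaves implicit).
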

    \begin{proof} 
    The key idea for the proof is
    to use the interplaying properties such as Lemma~\ref{lem: interplaying N} and Lemma~\ref{lem: integral N}, 
    and to consider $\px \left( (D^{(N)})^{n+1}x\right) $ and $(D^{(N)}) \left( (D^{(N)})^n x \right)$.
    
    Assume that a prime polynomial $f\in \QQ[x,y, z]$ is a common factor of the numerators of reduced forms of $(D^{(N)})^{n} x$ and $(D^{(N)})^{n+1} x$. 
    Then we write\[ (D^{(N)})^n x = f^k g/y^{\ell}, \quad (D^{(N)})^{n+1}x = f^k h/y^{\ell'}\] 
    for some integers $k\ge1$ and $\ell, \ell'\ge0$, and for some $g, h \in \QQ[x,y,z]$ such that $f \nmid \gcd(g,h)$. 
    
    Note that $\ell$ and $\ell' \in \{\ell, \ell+1\}$ are integers $\ge0$. Referring to Lemma~\ref{lem: nondividing N}, we see that $y \nmid f$. 

    Referring to Lemma~\ref{lem: integral N}\eqref{eqn: interplay x power N}, we have \[ \frac{\partial}{\partial x} (D^{(N)})^{n+1}x = \left(f^k h_x + k f^{k-1}f_x h\right) /y^{\ell'} = 
    \frac{(n+1)(n+2)}{d} f^k g / y^{\ell},\] so 
    \[\left( {\textstyle\frac{(n+1)(n+2)}{d}} g  y^{\ell'-\ell} -h_x  \right)f = k f_x h  \]
    in $\QQ[x,y,z]$, thus $f\mid h$, since $f$ is a prime and $f_x \ne 0$ by Lemma~\ref{lem: nondividing N}.

    Now we consider 
    \begin{align*}
        (D^{(N)})^{n+1}x &= f^k h /y^{\ell'} = D^{(N)}(f^k \cdot gy^{-\ell}) = kf^{k-1}gy^{-\ell}D^{(N)}(f) + f^k D^{(N)}(gy^{-\ell}),
    \end{align*}
    i.e.,
    \[ k g y^{\ell'-\ell} D^{(N)}(f) = f h - f y^{\ell'}D^{(N)}(g y^{-\ell}).
    \]
    Note that $y^{\ell' - \ell}D^{(N)}(f)$ or $D^{(N)}(g y^{-\ell})$ might have some powers of $y$ in the denominators of their reduced forms, so we let $y^{\ell''}$ be the largest power dividing their denominators. Then, we have 
    \[ k g y^{(\ell''+\ell'-\ell)} D^{(N)}(f) = \left(h y^{\ell''} -  y^{\ell'' + \ell'}D^{(N)}(g y^{-\ell})\right) \cdot f
    \]
    in $\QQ[x,y,z]$. Thus, 
    \[ f \mid g y^{\ell'' + \ell' - \ell} D(f).\]
    Note that $f$ does not divide the denominator of a reduced form of $D(f)$ by Lemma~\ref{lem: nondividing N}, and also $f \nmid y $, therefore $f \mid g$. This contradicts that $f \nmid \gcd (g, h)$. This proves  $\ngcd\left((D^{(N)})^{n} x, (D^{(N)})^{n+1} x \right)=1$.

    We apply the same argument with respect to $y$ and $z$ to complete the proof.
\end{proof}

\ 

We have proved all ingredients to prove Theorem~\ref{thm: simple zero quasimodular}.  

\begin{proof}[Proof of Theorem~\ref{thm: simple zero quasimodular}]
    It follows from Theorem~\ref{gcd} and Proposition~\ref{thm: simple zeros N}.
\end{proof}

\appendix

\section{
\texorpdfstring{
Degree analysis on $\left(D^{(3)}\right)^n$: Proofs of Lemma~\ref{lem: mn condition N=3} and Lemma~\ref{lem: y powers N=3}}{Degree analysis on (D⁽³⁾)ⁿ: Proofs of Lemma 4.3 and Lemma 4.4
}}
\label{sec: appendix}

In this appendix, we give an analysis on the degrees of denominators of $\left(\mathcal{D}^{(3)}\right)^n$ at $x$, $y$, $z$, and we prove Lemma~\ref{lem: mn condition N=3} and Lemma~\ref{lem: y powers N=3},  which have been postponed from Subsection~\ref{section: degree analysis} since they can be obtained in a similar manner as done for $\left(\mathcal{D}^{(2)}\right)^n$ in Section~\ref{section: degree analysis}. We would like to note that  the case for $\left(\mathcal{D}^{(3)}\right)^n$ is more complicated for us to deal with as you can see below.

In order to prove Lemma~\ref{lem: mn condition N=3} and Lemma~\ref{lem: y powers N=3}, we will give some conditions for the monomials appearing in each of $(D^{(3)})^n x$, $(D^{(3)})^n y$, $(D^{(3)})^n z$, and show that each of the minimal degrees with respect to $y$ among the monomials of $\left(D^{(3)}\right)^n(x)$, $\left(D^{(3)}\right)^n(y)$, and $\left(D^{(3)}\right)^n(z)$ decreases by $3$ as $n$ increases by $6$, for $n\ge0$.

Let $\mathcal{D}^{(3)} := dD^{(3)}= 6 D^{(3)}$ so that they have the integral coefficients. Then, 
\[ \mathcal{D}^{(3)}  = (x^2 - y) \frac{\partial}{\partial x} + (4xy - 4z) \frac{\partial}{\partial y} + (6xz - 3y^2 - 3 y^{-1}z^2 ) \frac{\partial}{\partial z},
\]
and
\begin{align*}
    \mathcal{D}^{(3)}(x^a y^b z^c) =(a+4b+6c)x^{a+1}y^b z^{c}-3c x^a y^{b+2} z^{c-1}  - (4b+3c) x^a y^{b-1}z^{c+1}  -a x^{a-1}y^{b+1} z^c. 
\end{align*}

We choose the same $T$ as defined in \eqref{eq:T}, and let  an integral lattice point $(\lambda, \nu)$ represent the monomial $x^a y^b z^c$ via relation \eqref{eqn: monomial to plane} as it is done for $N=2$ case, i.e., 
\[(a, b, c)^\tp = (1,r/2,0)^\tp +T^{-1} (\lambda, \nu, 0)^\tp.\] 

\begin{proof}[Proof of Lemma~\ref{lem: mn condition N=3}]
    Let $x^a y^b z^c$ be a monomial in $\left(\mathcal{D}^{(3)}\right)^r x$.  If 
    $\mathcal{D}^{(3)} (x^a y^b z^c) $  contains a monomial whose $\deg_y$ is less than 
    $b$, then such a monomial is $-(4b+3c)x^a y^{b-1}z^{c+1}$, 
    which is non-zero only when $4b+3c \ne 0$. Since the condition $4b+3c=0$ 
    is equivalent to $\nu  = \frac{2}{3}(\lambda -r)$ via 
    \eqref{eqn: monomial to plane}, the points on this line doesn't 
    produce a monomial with decreased $\deg_y$ through the 
    differential operator.

    (a) holds for the initial case $r=0$. 
    As $r$ increases, all points keep lying above or on the line $\nu  = \frac23 (\lambda -r)$; so we have $\nu \ge \frac{2}{3}(\lambda -r)$ for all $r$. This proves (a).
    
    The inequality from (a) is equivalent to $4b+3c \ge0$ by \eqref{eqn: monomial to plane}, and we conclude (b) since $4b+3c\ge0$, $a\ge0$, and $a+2b+3c=r+1$.
\end{proof}

\begin{proof}[Proof of Lemma~\ref{lem: y powers N=3}]
Let  $a_{i,j,k}^{(r)}$ denote the coefficient of $x^i y^j z^k$ in $\left( \mathcal{D}^{(3)}\right)^{r}x$. 


 We prove the lemma by showing that the minimal value among $\deg_y$ of the monomials in $\left(\mathcal{D}^{(3)}\right)^r(x)$
decreases exactly when applying $\mathcal{D}^{(3)}$ to each of $\left(\mathcal{D}^{(3)}\right)^{(6k+2)}(x)$, $\left(\mathcal{D}^{(3)}\right)^{(6k+3)}(x)$ and $\left(\mathcal{D}^{(3)}\right)^{(6k+4)}(x)$. It is enough to show that the coefficients $a_{0, -3k-1, 4k+2}^{(6k+3)}$, $a_{0, -3k, 4k+1}^{(6k+2)}$, $a_{0, -3k-2, 4k+3}^{(6k+4)}$ never vanish for $k\ge0$. We show by the non-triviality of those coefficients modulo~$7$.  

By some tedious calculations, we get;
\begin{alignat*}{2}
    A := a_{0, -3k-1, 4k+2}^{(6k+3)} & =-3 a_{0, -3k, 4k+1}^{(6k+2)}, \text{ and } a_{0, -3k, 4k+1}^{(6k+2)} & = -4 a_{0, -3k+1, 4k}^{(6k+1)},
    \end{alignat*}
therefore,  $A  = 12a_{0, -3k+1, 4k}^{(6k+1)}$. Similarly, since $a_{0, -3k+1, 4k}^{(6k+1)}  = - 5 a_{0, -3k+2, 4k-1}^{(6k)} -  a_{1, -3k, 4k}^{(6k)}$, we have \[ A  = -12\left( 5 a_{0, -3k+2, 4k-1}^{(6k)} +  a_{1, -3k, 4k}^{(6k)}\right).\]
Since  $\begin{cases} a_{0, -3k+2, 4k-1}^{(6k)} &= -6 a_{0, -3k+3, 4k-2}^{(6k-1)} - 12k a_{0, -3k, 4k}^{(6k-1)}  - a_{1, -3k+1,4k-1}^{(6k-1)}, \\
a_{1, -3k, 4k}^{(6k)} &= 12k a_{0, -3k, 4k}^{(6k-1)} - a_{1, -3k+1, 4k-1}^{(6k-1)},
\end{cases}\quad $
we have \[ A = 12 \times 6 \left( 5 a_{0, -3k+3, 4k-2}^{(6k-1)} + 
8k a_{0,-3k, 4k}^{(6k-1)} + a_{1, -3k+1, 4k-1}^{(6k-1)} \right). \]
Since 
$ \begin{cases} a_{0, -3k+3, 4k-2}^{(6k-1)} &= -7 a_{0, -3k+4, 4k-3}^{(6k-2)} + 
(-12k+3)a_{0, -3k+1, 4k-1}^{(6k-2)} 
- a_{1, -3k+2, 4k-2}^{(6k-2)}, \\
a_{0,-3k, 4k}^{(6k-1)}  &= -a_{0, -3k+1, 4k-1}^{(6k-2)},\\
 a_{1, -3k+1, 4k-1}^{(6k-1)}&= (12k-2)a_{0, -3k+1, 4k-1}^{(6k-2)} -2 a_{1, -3k+2, 4k-2}^{(6k-2)},\end{cases}\quad $ 
we have 
\begin{align*} A &= 12 \times 6 \left(-35 a_{0, -3k+4, 4k-3}^{(6k-2)} + 
(-56k+13)a_{0, -3k+1, 4k-1}^{(6k-2)}  
-7 a_{1, -3k+2, 4k-2}^{(6k-2)} \right) \equiv -2 a_{0, -3k+1, 4k-1}^{(6k-2)} \pmod{7}. \end{align*}
Finally, since $a_{0, -3k+1, 4k-1}^{(6k-2)} = -2 a_{0, -3k+2, 4k-2}^{(6k-3)} $, we also have 
\begin{align} A = a_{0, -3k-1, 4k+2}^{(6k+3)} \equiv 4 a_{0, -3(k-1)-1, 4(k-1)+2}^{(6(k-1)+3)}\pmod{7}. \label{eqn: N=3 deg minor 1}\end{align}
Recalling that  $\mathcal{D}^{(3)}(x) = 6 x^4-36 x^2 y+48 x z-6 y^2-\frac{12 z^2}{y}$, 
 since $a_{0, -1, 2}^{(3)}= -12 \not\equiv 0\pmod 7$, the coefficient $a_{0, -3k-1, 4k+2}^{(6k+3)}$ never vanishes. 
Many parts of the calculations for the recurrences of the coefficients are recyclable for other two cases. In short, we get
\begin{align*} 
    a_{0, -3k, 4k+1}^{(6k+2)} & = -4 a_{0, -3k+1, 4k}^{(6k+1)} = 4 \left(5 a_{0, -3k+2, 4k-1}^{(6k)} +  a_{1, -3k, 4k}^{(6k)}\right)\\
    & = -24 \left( 5 a_{0, -3k+3, 4k-2}^{(6k-1)} + 8k a_{0, -3k, 4k}^{(6k-1)} + a_{1, -3k+1, 4k-1}^{(6k-1)} \right)\\
    & = -24 \left( -35 a_{0, -3k+4, 4k-3}^{(6k-2)} + (-56k+13)a_{0, -3k+1, 4k-1}^{(6k-2)} - 7 a_{1, -3k+2, 4k-2}^{(6k-2)} \right)\\
    & \equiv 3 a_{0, -3k+1, 4k-1}^{(6k-2)} \equiv a_{0, -3k+2, 4k-2}^{(6k-3)} \equiv 4 a_{0, -3k+3, 4k-3}^{(6k-4)} \pmod{7}, \quad \text{ and }\\
    a_{0, -3k-2, 4k+3}^{(6k+4)} & = -2 a_{0, -3k-1, 4k+2}^{(6k+3)} =  6 a_{0, -3k, 4k+1}^{(6k+2)}= -24 a_{0, -3k+1, 4k}^{(6k+1)}\\
    & = 24 \left(5 a_{0, -3k+2, 4k-1}^{(6k)} +  a_{1, -3k, 4k}^{(6k)}\right)\\
    & =  -24\times 6 \left( 5 a_{0, -3k+3, 4k-2}^{(6k-1)} + 8k a_{0, -3k, 4k}^{(6k-1)} + a_{1, -3k+1, 4k-1}^{(6k-1)} \right)\\
    & \equiv 4 a_{0, -3k+1, 4k-1}^{(6k-2)} \pmod{7},
\end{align*}
thus 
\begin{align} \label{eqn: N=3 deg minor 2}
&  a_{0, -3k, 4k+1}^{(6k+2)} \equiv 4a_{0, -3(k-1),4(k-1)+1}^{(6(k-1)+2)} \pmod 7, \\ \label{eqn: N=3 deg minor 3}
 & a_{0, -3k-2, 4k+3}^{(6k+4)}  \equiv 4 a_{0, -3(k-1)-2, 4(k-1)+3}^{(6(k-1)+4)}\pmod{7}. \end{align}
Since $a_{0, 0, 1}^{(2)}= 4 \not\equiv 0\pmod 7$ and $a_{0, -2, 3}^{(4)}= 24 \not\equiv 0\pmod 7$, we conclude that the coefficients $a_{0, -3k-1, 4k+2}^{(6k+3)}$, $a_{0, -3k, 4k+1}^{(6k+2)}$, and $a_{0, -3k-2, 4k+3}^{(6k+4)}$ never vanish for $k\ge0$, i.e., they provide non-zero monomials with decreased $\deg_y$ after  applying $\mathcal{D}^{(3)}$. With Lemma~\ref{lem: mn condition N=3}\ref{lem: sharp ineq N=3}, 
the minimal $\deg_y$ among monomials appearing in $\left(\mathcal{D}^{(3)}\right)^{r+1}(x)$ drops by $1$ than those for $\left(\mathcal{D}^{(3)}\right)^{r}(x)$, only when $r\equiv 2,3, 4\pmod{6}$
as shown in \eqref{eqn: N=3 deg minor 1}, \eqref{eqn: N=3 deg minor 2} and \eqref{eqn: N=3 deg minor 3}. Therefore we conclude that 
$y^{\ell_r}\cdot \left(D^{(3)}\right)^r x \in \QQ[x,y,z] \setminus y\QQ[x,y,z]$, with $\ell_1, \dots, \ell_6 = 0, 0, 1, 2, 3, 3$ and $\ell_{r+6} = \ell_r + 3$ for all $r\ge1$. 

The analyses for $\left( D^{(3)}\right)^r y$ and $\left( D^{(3)}\right)^r z$ can be done in the same way, and this completes the proof.
\end{proof} 

\section*{Acknowledgement}
We appreciate  Professor SoYoung Choi for suggesting this problem.

\bibliographystyle{plainnat}

\begin{thebibliography}{}
\bibitem{11}
R. Balasubramanian; S. Gun:
On zeros of quasi-modular forms, J. Number Theory 132 (2012), no. 10, 2228-2241. 
\bibitem{BRU}
J.H. Bruinier; G. van der Geer; G. Harder; D. Zagier: The 1-2-3 of modular forms. Lectures from the Summer School on Modular Forms and their Applications held in Nordfjordeid, June 2004. Edited by Kristian Ranestad. Universitext. Springer-Verlag, Berlin, 2008.
\bibitem{6}
S. Choi; B.-H. Im: Zeros of a quasi-modular form of weight 2 for \(\Gamma_0^+(N)\), Taiwanese J. Math. 19 (2015), no. 5, 1369-1386.
\bibitem{5}
S. Choi; B.-H. Im: On the zeros of certain weakly holomorphic modular forms for \(\Gamma_0^+(2)\), J. Number Theory 166 (2016), 298-323.
\bibitem{13}
S. Choi; B.-H. Im: The location of zeros of a quasi-modular form of weight 2 for \(\Gamma_0^+ (p)\) for \(p= 2, 3\), J. Math. Anal. Appl. 438 (2016), no. 1, 73-92.



\bibitem{ES10}
A. El Basraoui; A. Sebbar: Zeros of the Eisenstein series $E_2$, Proc. Amer. Math. Soc. 138 (2010), no. 7, 2289–2299.
\bibitem{equi-s}
A. El basraoui; A. Sebbar: Equivariant forms: structure and geometry, Canad. Math. Bull. 56 (2013), no. 3, 520-533.
\bibitem{7}
J. Getz: A generalization of a theorem of Rankin and Swinnerton-Dyer on zeros of modular forms, Proc. Amer. Math. Soc. 132 (2004), no. 8, 2221-2231.
\bibitem{8}
S. Gun: On the zeros of certain cusp forms, Math. Proc. Cambridge Philos. Soc. 141 (2006), no. 2, 191-195.
\bibitem{SJ20}
S. Gun; J. Oesterl\'{e}: Critical points of Eisenstein series, Mathematika 68 (2022), no. 1, 259–298.
\bibitem{IL}
B.-H. Im., W. Lee: The finiteness of derivation-invariant prime ideals and the algebraic independence of the Eisenstein series. Ramanujan J. 56 (2021), no. 3, 865-889. 
\bibitem{JST}
J. Jorgenson; L. Smajlović; H. Then: Certain aspects of holomorphic function theory on some genus-zero arithmetic groups, LMS J. Comput. Math. 19 (2016), no. 2, 360-381.
\bibitem{4}
M. Kaneko; D. Zagier: A generalized Jacobi theta function and quasi-modular forms, The moduli space of curves (Texel Island, 1994), 165–172, Progr. Math. 129, Birkhäuser Boston, Boston, MA, 1995.
\bibitem{9}
W. Kohnen: Zeros of Eisenstein series, Kyushu J. Math. 58 (2004), no. 2, 251-256.
\bibitem{MEH}
J. Meher: Some remarks on zeros of quasi-modular forms, Arch. Math. (Basel) 101 (2013), no. 2, 121-127. 
\bibitem{12}
T. Miezaki; H. Nozaki; J. Shigezumi: On the zeros of Eisenstein series for \(\Gamma_0^*(2)\) and \(\Gamma_0^* (3)\). J. Math. Soc. Japan 59 (2007), no. 3, 693-706.
\bibitem{1}
Y.V. Nesterenko: Modular functions and transcendence questions, Sb. Math. 187 (1996), no. 9, 1319-1348 
\bibitem{POO}
B. Poonen: Squarefree values of multivariable polynomials, Duke Math. J. 118 (2003), no. 2, 353–373.
\bibitem{Ran82}
R. A. Rankin: The zeros of certain Poincaré series, Compositio Math. 46 (1982), no. 3, 255–272.
\bibitem{RS70}
F. K. C. Rankin; H. P. F. Swinnerton-Dyer: On the zeros of Eisenstein series., Bull. London Math. Soc. 2 (1970), 169–170.
\bibitem{10}
Z. Rudnick: On the asymptotic distribution of zeros of modular forms, Int. Math. Res. Not. 2005, no. 34, 2059-2074.

\bibitem{SHI}
G. Shimura: Introduction to the arithmetic theory of automorphic functions, Publications of the Mathematical Society of Japan, 11. Kanô Memorial Lectures, 1. Princeton University Press, Princeton, NJ, 1994.
\bibitem{17}
H. Saber; A. Sebbar: On the critical points of modular forms, J. Number Theory 132 (2012), no. 8, 1780-1787.
\bibitem{Woh64}
K. Wohlfahrt: \"Uber die Nullstellen einiger Eisensteinreihen (German), Math. Nachr. 26
(1963/1964), 381–383.
\bibitem{ZUD}
W. Zudilin: The hypergeometric equation and Ramanujan functions, Ramanujan J. 7 (2003), no. 4, 435-447.






\end{thebibliography}

\end{document}